\documentclass[11pt]{amsart}
 \usepackage{amsmath,amssymb,enumerate}
 \usepackage[all]{xy}
 \newtheorem{prop}{Proposition}[section]
 \newtheorem{coro}[prop]{Corollary}

 \newtheorem{lem}[prop]{Lemma}
 \newtheorem{rem}[prop]{Remark}
 \newtheorem{exa}[prop]{Example}
 
  \newtheorem{defi}[prop]{Definition}
 \newtheorem{theo}[prop]{Theorem}

 \newcommand{\B}{\mathbb B}
 
 \newcommand{\R}{\mathbb R}
 \newcommand{\Q}{\mathbb Q}
 \newcommand{\C}{\mathbb C}
 \newcommand{\N}{\mathbb N}
 \newcommand{\Z}{\mathbb Z}

 \newcommand{\e}{\varepsilon}
 \newcommand{\f}{\varphi}
 
 \newcommand{\p}{\psi}

 \numberwithin{equation}{section}

 \begin{document}

  \title[Parabolic complex Monge-Amp\`ere equations III]{Convergence of weak K\"ahler-Ricci Flows on minimal models of positive Kodaira dimension}

\setcounter{tocdepth}{1}

  \author{ Philippe Eyssidieux, Vincent Guedj, Ahmed Zeriahi} 

\address{Universit\'e Joseph Fourier et Institut Universitaire de France, France}

\email{Philippe.Eyssidieux@ujf-grenoble.fr }

\address{Institut de Math\'ematiques de Toulouse et Institut Universitaire de France,   \\ Universit\'e Paul Sabatier \\
118 route de Narbonne \\
F-31062 Toulouse cedex 09\\}

\email{vincent.guedj@math.univ-toulouse.fr}

\address{Institut de Math\'ematiques de Toulouse,   \\ Universit\'e Paul Sabatier \\
118 route de Narbonne \\
F-31062 Toulouse cedex 09\\}

\email{zeriahi@math.univ-toulouse.fr}

 \date{\today}

 \begin{abstract}  
Studying the behavior of the K\"ahler-Ricci flow on mildly singular varieties, one is naturally lead to study weak solutions of degenerate parabolic complex Monge-Amp\`ere equations.

In this article, the third of a series on this subject, we study the long term 
behavior of the normalized K\"ahler-Ricci flow on mildly singular varieties of positive Kodaira dimension, 
generalizing results of Song and Tian who dealt with smooth minimal models.
\end{abstract}

 \maketitle

\tableofcontents

%\newpage

\section{Introduction}

The (normalized) K\"ahler-Ricci flow on a compact K\"ahler manifold $X$ starting at a K\"ahler form $\omega_0$ is 
the initial value problem for the  evolution of   a 
smooth family of K\"ahler forms $ (\omega_t)_{t\ge 0}$ given by 
$$\frac{\partial \omega}{\partial t} = -Ric(\omega) -\omega. 
$$
This flow has infinite existence time if and only if $K_X$ is nef \cite{TZ06}. 

Assuming the abundance conjecture for $X$, the asymptotic behaviour as $t\to \infty$ has been understood
in \cite{ST07, ST12}. 
The flow converges weakly towards a closed positive $(1,1)$-current $T_{can} \in \{ K_X \}$ independent of the initial metric $\omega_0$. 
It appears as  the curvature of a singular hermitian metric giving rise to a canonical 
absolutely continuous measure (or volume form) 
$v_{can}$. If $j: X \to J$ is the natural model of the Iitaka fibration of $X$, that is 
the proper holomorphic map appearing as the Stein factorization of the regular map attached to the linear system $|NK_X|$ for $N\in \N^*$ divisible enough, one has 
also 
$T_{can}=j^*\theta_{can}$ where $\theta_{can}$ is a canonical twisted K\"ahler-Einstein metric, namely a closed positive current on $J$ satisfying a 
complex Monge-Amp\`ere equation in the sense of \cite{EGZ09}. 

The basic ideas of the Minimal Model Program indicate that these results should generalize to singular K\"ahler varieties with terminal singularities 
(a rather mild sort of singularities) and also to klt pairs. 
The study of  the K\"ahler-Ricci flow on these mildly singular varieties was undertaken by 
 Song and Tian in \cite{ST09}.  It requires 
a theory of weak solutions for certain  degenerate parabolic complex Monge-Amp\`ere equations modelled on 
\begin{equation}\label{krf}
 \frac{\partial \phi}{\partial t}+\phi= \log \frac{(dd^c\phi)^n}{v},
\end{equation}
where $v$ is a volume form and $\phi$ a time-dependent K\"ahler potential.

The approach in \cite{ST09} is to regularize the equation
and take limits of the solutions of the regularized equation with uniform
higher order estimates. We believe that   
a zeroth order approach should be both simpler and more efficient, namely an approach  
using the PDE method of viscosity solutions for degenerate parabolic equations. 
Using this method, basic existence theorems for the K\"ahler Ricci flow on klt pairs 
have been established in \cite{EGZ16}.
 
 The present article, the third one of a series on this subject, 
 aims at studying the long term  
behavior of the normalized K\"ahler-Ricci flow on terminal abundant K\"ahler varieties of positive Kodaira dimension.

We begin by developping a generalization of Song-Tian's canonical volume form and twisted K\"ahler-Einstein metric to singular varieties and pairs:

\medskip
\noindent {\bf Theorem A.}
{\it 
Let $X$ be a compact $n$-dimensional K\"ahler variety  and $\Delta$ be an effective $\Q$-divisor such that 
the pair $(X,\Delta)$ has klt singularities and semi-ample canonical bundle.
Then the Iitaka variety $J$ of $(X,\Delta)$ carries a canonical twisted K\"ahler-Einstein metric $\theta_{can}$ with continuous potentials. If $X$ is projective, $\theta_{can}$ is
smooth on a non empty Zariski open subset of $J$.
}
\medskip

 See section 2 for a more precise statement and further generalizations.  
 The $C^0$-existence theorem is relatively simple in our approach and follows from \cite{EGZ09,CGZ13}. 
 The  generic smoothness of $\theta_{can}$ is  more technical 
 and established via P\u{a}un's Laplacian estimate \cite{Paun08},
 unlike Song and Tian who established the case when $X$ is smooth and $\Delta=\emptyset$ by 
 a direct perturbation method. In our approach,  P\u{a}un's conditions are established using Kawamata's canonical bundle formula. 
The projectivity assumption can probably be removed
 and comes from our reliance on Ambro's study of lc-trivial fibrations \cite{A}.

 In the context of Theorem A, we denote by $T_{can}$ be the pull back of $\theta_{can}$ by the Iitaka map. 
 In the general type case the canonical current $T_{can}$ coincides with the singular K\"ahler-Einstein metric of \cite{EGZ09}. In the Calabi-Yau case it reduces to the zero current.
 
 \smallskip

Generalizing results of Song and Tian who dealt with smooth minimal models \cite{ST12}, we conjecture :

 \medskip
\noindent {\bf Conjecture B.}
{\it 
Let $(X,\Delta)$ be a compact $n$-dimensional K\"ahler klt pair  
  with canonical singularities and semi-ample canonical bundle.
Then the normalized K\"ahler-Ricci flow
continuously deforms any 
initial K\"ahler form  $\omega_0$ towards the canonical current
$T_{can}$.
}
\medskip

 Since we do not have a complete proof even in the case where $\Delta=\emptyset$ and 
 $X$ has only canonical singularities, we will limit the discussion to this case.

When the Kodaira dimension $\kappa$  is either zero or maximal 
%($\kappa=\dim_{\C} X$),
Conjecture B has been established in \cite{EGZ15}, following earlier works of Cao \cite{Cao85}, Tsuji \cite{Ts88},
Tian-Zhang \cite{TZ06} and Song-Yuan \cite{SY12}.
In this case the canonical current is the singular K\"ahler-Einstein metric constructed in \cite{EGZ09}.

We focus in this article on the harder case of intermediate Kodaira dimension $0<\kappa<n$.
The Minimal Model Program predicts  that any compact K\"ahler manifold 
%$X'$ 
of positive Kodaira dimension $\kappa$
is birationally equivalent to a minimal model $X$ 
with terminal (hence canonical) singularities.
The abundance conjecture then stipulates that the canonical bundle of $X$ is semi-ample,
in accordance with our hypotheses.
This program is known to hold in dimension $\leq 3$ \cite{CPH}.

Denote by $j:X\to J$  the Iitaka fibration of $X$. The semi-flat current $\omega_{SF}=\omega_0+dd^c \rho$ is constructed as follows : 
a generic fiber $X_y$ of $j$ is a $\Q$-Calabi-Yau variety, 
then ${\omega_{SF}}_{|X_y}={\omega_0}_{|X_y}+dd^c \rho_y$ denote the unique Ricci-flat current in $\{\omega_0\}_{|X_y}$,
where $\rho_y$ is normalized so that $\int_{X_y} \rho_y \, {\omega_0}_{|X_y}^{n-\kappa}=0$ \cite{EGZ09}.
We reduce Conjecture B to   a regularity statement for the {\em semi-flat current} $\omega_{SF}$ (aka the fiberwise Ricci flat metric) on $X$ :

 \medskip
\noindent {\bf Conjecture C.}
{\it The semi-flat current
$\omega_{SF}$ is smooth on a non empty Zariski open subset of $X$.
}
\medskip

When the generic fibers $X_y$ are smooth,  it follows from Yau's theorem \cite{Yau78}
and the implicit function theorem
that $\rho$ is indeed smooth on 
%a non-empty Zariski open subset 
the union of regular fibers.
This is is also automatic if $\kappa(X)=\dim(X)-1$. 
If $\kappa(X)=\dim(X)-2$, the generic fiber has only isolated quotient singularities and 
orbifold techniques readily yield conjecture C in this case. 

Conjecture C can be seen as a relative version of \cite{EGZ09}. When $X$ is smooth,
the semi flat-current has been recently studied by Choi \cite{Choi15}
who proved that $\omega_{SF} \ge 0$ on the union of regular fibers of $j$. We expect both statements  to hold
for the fiberwise K\"ahler-Einstein metric of a fibration whose general fiber is canonically polarized.
One may also speculate that $\omega_{SF}$  extends as a closed positive current on the whole of $X$. 

Our second main result shows that Conjecture C implies Conjecture B in the following context :

\medskip
\noindent {\bf Theorem D.}
{\it 
Let $X$ be a compact $n$-dimensional K\"ahler variety  
  with  canonical singularities and semi-ample canonical bundle.
  Assume the semi-flat current $\omega_{SF}$ is smooth on a non empty Zariski open subset of $X$.

Then the normalized K\"ahler-Ricci flow
continuously deforms any 
initial K\"ahler form  $\omega_0$ (resp.
any K\"ahler current with continuous potentials in $\{\omega_0\}$) towards the canonical current
$T_{can}$.
}
\medskip

Note that our assumptions are satisfied hence our result applies to any minimal $3$-fold.
The convergence holds at the level of potentials; it is locally uniform on a Zariski open subset of $X$
(away from the singular locus of $X$ and the singular fibers of the Iitaka fibration). 
More precise information on the convergence have been provided in recent works when $X$ is smooth,
and when the fibers are tori (see notably
\cite{Tos10,TWY14,TZ15}).

\medskip

We now describe the main tool for the proof
Theorem D, a viscosity comparison principle
for compact manifolds with boundary, which applies to quite general complex Monge-Amp\`ere flows.
We study here the  flows
\begin{equation} \label{eq:krfglobal}
 e^{\dot{\f}_t + F (t,x,\f) } \mu (x) - (\omega_t +dd^c \f_t)^n = 0, 
\end{equation}
in $M_T=(0,T) \times M$, where $T \in ]0,+\infty]$ and
  \begin{itemize}
 \item $\overline{M}$ is a compact K\"ahler $n$-dimensional manifold with interior $M$ and boundary $\partial M = \overline M \setminus M$;
  \item $\omega(t,x)$ is a continuous family of semi-positive $(1,1)$-forms on $\overline{M}$, 
uniformly bounded from below by a semi-positive and big form $\theta(x)$;
\item we always assume that $M$ is contained in the ample locus of $\theta$;
  \item $F (t,x,r)$ is continuous in $[0,T[ \times \overline{M} \times \R$ and non decreasing in $r$,
$(t,x,r) \mapsto F(t,x,r)$ is uniformly Lipschitz  in $r$ and 
$(t, x) \mapsto  F (t, x, 0)$ is uniformly bounded from above.
  \item $\mu (x)\geq 0$ is a bounded continuous volume form on $\overline{M}$,
%depending on $ t \in [0,T[$, 
   \item $\f : [0,T[ \times \overline{M} \rightarrow \R$ is the unknown function, with $\f_t: = \f (t,\cdot)$. 
   \end{itemize}

\medskip
\noindent {\bf Theorem E.}
{\it 
 Assume  $t \mapsto \omega_t$ is regular.
Let $u(t,x)$ (resp. $v(t,x)$) be a subsolution (resp. a supersolution) to the equation (\ref{eq:krfglobal})
in $M_T$. If $u \leq v$ on $\partial_P M$, then $u \leq v$ on $M_T= (0,T) \times M$.
}
\medskip

Here $\partial_P M$ denotes the parabolic boundary of $M_T$,
\begin{equation} \label{eq:parabd}
\partial_P M=\{0\} \times \overline{M} \, \bigcup  \, [0,T) \times \partial M ,
\end{equation}
and we say $t\mapsto \omega_t$ is regular, if the following holds: 
for every $ \epsilon >0$ there exists $E(\e)> 0 $ such  that $\forall t\in [0,T-2\e],  \ \forall t'\in ]t-\e,t+\e[$,
\begin{equation} \label{eq:techcond}
  \ (1+E(\epsilon)) \omega_t \ge \omega_{t'} \ge (1-E(\epsilon) ) \omega_t 
\end{equation}
and $E(\e) \to 0$ as $\e \to 0$. 
This technical condition is satisfied in the following particular case of interest: if $\pi: X\to Y$ is a bimeromorphic morphism onto a normal K\"ahler variety and $\omega_t^Y$ is a continuous family 
of {\it {smooth}} K\"ahler forms on $Y$, then $\omega_t=\pi^*\omega_t^Y$  is regular
(see \cite[Lemma 2.7]{EGZ16}).

When $M$ is a bounded pseudonconvex domain of $\C^n$, Theorem E  was  obtained in 
\cite{EGZ15}. When $M$ is a compact K\"ahler manifold without boundary, it is the 
main technical result of \cite{EGZ16}. 
We have been informed by J.Streets that the proof of this result \cite[Theorem A]{EGZ16} is not correct as it stands.
We provide here a different approach which yields an alternative proof of the global comparison principle in the ample locus
and is valid in the present context of manifolds with boundary.
This extension of the comparison principle is both useful and necessary to construct sub/supersolutions to the K\"ahler-Ricci flow 
 (see Propositions \ref{pro:subsol2}, \ref{pro:supsolkappageneral}).

 \smallskip

We finally describe the precise contents of the article.
Section \ref{sec:st} studies Song-Tian's  canonical volume form and current in a general context,
proving {\it Theorem A}.
In section \ref{sec:visc} we recall the parabolic viscosity tools introduced in
\cite{EGZ15,EGZ16} and prove {\it Theorem E}.
We finally prove  {\it Theorem D} in section \ref{sec:longterm}
by establishing global uniform bounds and then 
constructing  appropriate viscosity sub/ supersolutions in  arbitrary large subsets
of a fixed Zariski open set.

 \medskip

\noindent {\bf Acknowledgements.} 
We thank S. Druel and E. Floris for useful conversations.
We also would like to thank Jeff Streets for pointing out a problem in the original proof of the comparison principle.

\medskip

\section{The Song-Tian canonical volume form}  \label{sec:st}

In this section, we  simplify the exposition of the  construction of  the Song-Tian canonical volume form and current \cite{ST12} as a direct application of \cite{EGZ09}, 
and we extend it first to the case of abundant klt pairs then to sub-klt lc-trivial fibrations. The proof of  generic smoothness for klt pairs does indeed require
the framework to be enlarged to lc-trivial fibrations.

\subsection{Push forward of volume forms by holomorphic mappings} Let us begin by an easy lemma. 

\begin{lem}\label{pushforward1}
 Let $(X,\omega_X)$ (resp. $(Y,\omega_Y)$) be normal  K\"ahler spaces. Let $F: X\to Y$ be a proper
surjective holomorphic mapping.
Then, for every  precompact open subset $U\subset Y$ there exists a constant
 $\epsilon=\epsilon_U >0$ such that $F_*  \omega_X^{\dim X} = \sigma \omega_Y^{\dim Y}$ 
 with $\sigma\in L^{1+\epsilon}(U, \omega_Y^{\dim Y})$
\end{lem}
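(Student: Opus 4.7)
\medskip

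\noindent\textbf{Plan.} Write $n=\dim X$ and $m=\dim Y$. The claim is local on $Y$, so by a compactness/covering argument applied to $\overline U$ it suffices to establish the existence of a local exponent $\e>0$ in some neighborhood of each point $y_0\in \overline U$; the global $\e_U$ is then obtained as the minimum of the resulting local exponents over a finite subcover. Because the singular loci of $X$ and $Y$ have Lebesgue measure zero and a log-resolution $\pi:\tilde X\to X$ satisfies $\pi_*(\pi^*\omega_X)^n=\omega_X^n$ as measures, one may furthermore assume that both $X$ and $Y$ are smooth K\"ahler manifolds and replace $F$ by $F\circ\pi$.

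The heart of the argument is a monomialization step, obtained by combining Hironaka's flattening theorem with an embedded resolution of singularities. This produces a further bimeromorphic modification $\mu: X' \to X$ such that the composition $F' := F \circ \mu$ is, in suitable local analytic coordinates, in \emph{monomial form}: for each $x_0 \in F'^{-1}(y_0)$ there exist coordinates $(z_1, \ldots, z_n)$ on $X'$ centered at $x_0$ and $(w_1, \ldots, w_m)$ on $Y$ centered at $y_0$ in which
\[
w_j(F'(z)) \;=\; u_j(z)\, z^{\alpha_j}, \qquad j = 1, \ldots, m,
\]
with $u_j$ nowhere-vanishing holomorphic functions and multi-indices $\alpha_j \in \N^n$.

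In these coordinates the density $\s$ of $F'_*(\mu^*\omega_X)^n$ with respect to $\omega_Y^m$ can be computed by a direct change of variables and a fiber-integration (Fubini) argument. The outcome is that $\s$ is locally bounded by a finite sum of terms of the form $|w|^{-\g}(\log 1/|w|)^k$ with rational multi-exponent $\g = (\g_1, \ldots, \g_m)$ satisfying $\g_j < 1$ and integer $k \geq 0$. Any such function belongs to $L^{1+\e}_{\mathrm{loc}}$ provided $\e>0$ is chosen small enough in terms of $\max_j \g_j$, which yields the desired local exponent.

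The main obstacle is the monomialization step itself, which relies on the combination of Hironaka's flattening with a subsequent resolution putting $F'$ into toroidal/monomial form. Once this reduction is in place the $L^{1+\e}$ estimate is an elementary explicit integration, and the passage from $F'$ back to $F$ is automatic since $\mu_*(\mu^*\omega_X)^n=\omega_X^n$ as measures, so that $F_*\omega_X^n=F'_*(\mu^*\omega_X)^n$.
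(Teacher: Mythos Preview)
Your outline is a recognizable alternative strategy, but as written it is a plan rather than a proof, and the two steps you defer are exactly the nontrivial ones.

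\smallskip

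\textbf{On monomialization.} Hironaka's flattening theorem produces a modification of the \emph{target} $Y$; if you modify $Y$ you must then push the density forward along $Y'\to Y$, and $L^{1+\e}$ on $Y'$ does not obviously yield $L^{1+\e}$ on $Y$ (the Jacobian of $Y'\to Y$ vanishes on the exceptional locus). What one \emph{can} achieve by modifying only $X$ is to make the divisor $\sum_j\mathrm{div}(w_j\circ F)$ simple normal crossings, so that each $w_j\circ F'$ is locally a unit times a monomial in a common chart on $X'$. This is weaker than putting the map $F'$ into toroidal form, and you should say clearly which statement you are invoking and why it suffices.

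\smallskip

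\textbf{On the bound for $\sigma$.} Even granting the monomial description of the $w_j\circ F'$, the assertion that $\sigma$ is bounded by $\sum |w|^{-\gamma}(\log 1/|w|)^k$ with $\gamma_j<1$ is the entire content of the lemma and cannot simply be declared. The fiber integration producing $\sigma$ is genuinely delicate where fibers degenerate (this is where the logarithms appear), and controlling the exponents requires an actual computation; the analysis belongs to the circle of ideas around asymptotics of fiber integrals and Nilsson class functions, which is not lightweight. Your normalization of $|w|^{-\gamma}$ is also ambiguous: already for $z\mapsto z^k$ one gets $\sigma\sim |w|^{-2(k-1)/k}$, so ``$\gamma<1$'' is only correct if $|w|^{-\gamma}$ means $|w|^{-2\gamma}$.

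\smallskip

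\textbf{The paper's argument} sidesteps all of this. After reducing to $X$ smooth and $Y$ a polydisc, one writes $(F^*\omega_Y)^m\wedge\omega_X^{n-m}=\mu\,\omega_X^n$ with $\mu\ge 0$ real-analytic, vanishing on $\mathrm{Crit}(F)$. Choosing generators $\gamma_1,\dots,\gamma_r$ of the ideal of $F(\mathrm{Crit}(F))$ and setting $\lambda=\sum|\gamma_j\circ F|^2$, \L ojasiewicz's inequality gives $\mu\ge C\lambda^\alpha$, hence $\sigma(y)\le C'\big(\sum|\gamma_j(y)|^2\big)^{-\alpha}$. The key trick is then
\[
\int_U \sigma^{1+\e}\,\omega_Y^m \;\le\; C'^{\e}\int_U \Big(\sum|\gamma_j|^2\Big)^{-\alpha\e}\,F_*\omega_X^n
\;=\; C'^{\e}\int_{F^{-1}(U)} \lambda^{-\alpha\e}\,\omega_X^n,
\]
which is finite as soon as $\alpha\e$ lies below the complex singularity exponent of $\log\lambda$ on $F^{-1}(\bar U)$ (Demailly--Koll\'ar). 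No resolution beyond smoothing $X$, no flattening, no explicit fiber integral.

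\smallskip

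In short: your route can be made to work, but it is substantially heavier and your write-up leaves the two hard steps as assertions; the paper's \L ojasiewicz-plus-singularity-exponent argument is both shorter and more elementary.
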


Observe that $\sigma\in L^1(Y, \omega_Y^{\dim Y})$ since, by definition, $$\int _Y \sigma \omega_Y^{\dim Y}= \int_Y F_*  \omega_X^{\dim X}=\int_X  \omega_X^{\dim X}.$$

\begin{proof} 
Since we may blow up  $X$ ad libitum, we may assume $X$  is smooth. Assume first $Y$ is smooth. 
Then, we may assume $Y=(2\Delta)^{\dim Y}$,  $U=\Delta^{\dim Y}$. We may assume
$\omega_Y=\frac{i}{2} \sum_{j=1}^{\dim Y} dz_j\wedge d\bar z_j$ where $(z_j)$ are standard coordinates. 
Setting  $f_j=z_j\circ F$ we get
$$
(\frac{i}{2} \sum_{j=1}^{\dim Y} df_j\wedge d\bar f_j)^{\dim Y} \wedge \omega_X^{{\dim X}-{\dim Y}}=\mu \omega_X^{{\dim X}},
$$
where $\mu$ is a nonnegative real analytic function vanishing on $Crit(F)$. 
Consider $\gamma_1, \ldots \gamma_r\in \mathcal{O}(Y)$ holomorphic functions generating the ideal sheaf of the closed complex-analytic subspaces
 $F(Crit(F))$ near $U$ and define $$\lambda=\sum_j |\gamma_j\circ F|^2.$$
The zero set of the nonnegative real analytic function $\lambda$ contains the zero set of $\mu$ near $F^{-1}(U)$.
By Lojaziewicz's inequality, there exists constants $C,\alpha >0$ such that $\mu\ge C. \lambda^{\alpha}$. 

Thus if $y \not \in F(Crit(F))$, 
$$\sigma(y)=\int_{F^{-1}(y)} {\mu}^{-1} \omega_X^{\dim(X)-\dim(Y)}\le
% C^{-1} \int_{F^{-1}(y)} \omega_X^{\dim(X)-\dim(Y)}\lambda^{-\alpha}\le
C' (\sum_j |\gamma_j(y)|^2)^{-\alpha}. 
$$
It follows that
\begin{eqnarray*}
 \int_{U-F(Crit(F))} \sigma^{1+\epsilon} \omega_Y^{\dim Y} 
&\le &
C'\int_{U-F(Crit(F))}
  (\sum_j |\gamma_j|^2)^{-\alpha\epsilon} F_*\omega_X^{\dim X} \\
&=&  C'\int_{F^{-1}(U-F(Crit(F)))}
  (\sum_j |\gamma_j\circ F|^2)^{-\alpha\epsilon} \omega_X^{\dim X}\\
&<& +\infty, 
\end{eqnarray*}
if  $\epsilon$ is chosen so small that $\alpha\epsilon < c_{F^{-1} (\bar U)}(\log \lambda)$, where $c_{F^{-1} (\bar U)}(\log \lambda)$ denotes  the complex singularity exponent of the quasi-psh function 
$\log\lambda$ \cite{DK}. 

The case where $Y$ is singular follows from \cite[Lemma 3.2]{EGZ09}. 
\end{proof}

\subsection{The canonical volume form of a minimal abundant klt pair}

\subsubsection{Main construction}
Let $(X,\Delta)$ be a compact K\"ahler klt\footnote{See \cite{KM} for the definition. In particular, $\Delta$ is a $\Q$-Weil divisor whose coefficients lie in $]0,1[$.} pair and $h$ a smooth hermitian 
metric  on the $\Q$-line bundle  $L=(\mathcal{O}(N(K_X+\Delta))^{1/N}$, $N$ being the index of the 
$\Q$-Cartier divisor $K_X+\Delta$.

As in \cite[section 6]{EGZ09} define a  finite measure  on $X$ by 
$$v(h)=\frac{c_n\gamma\wedge\bar \gamma}{\| \gamma \|^2_{h}}$$ 
where $\gamma$ is a local multivalued non vanishing section of $L$ viewed as a multivalued top degree form with pole structure described by $\Delta$      and $c_n$ is
the unique complex number of modulus one making the expression positive.
The construction is independent of $\gamma$.

We assume furthermore that $(X, \Delta)$ is minimal and abundant :

\begin{defi}
A pair $(X, \Delta)$ is minimal and abundant
if  we may choose the first Chern form of $ h$ to be of the form $j^*\chi$ where: 
\begin{itemize}
 \item  $j: X \to J$ a surjective connected proper holomorphic map,  $J$ being a normal projective variety, 
 \item $\chi$ is a Hodge form on $J$, namely a K\"ahler class representing an ample $\Q$-Cartier divisor, 
 \item 
$\{ K_{X}+\Delta \}=j^*\{\chi\}$. 
\end{itemize}
\end{defi}

For $N\in \N^*$ such that $N.(K_X+\Delta)$ is Cartier, $j$ is the natural holomorphic map to
the canonical model of $X+\Delta$,
 $$
 J=\mathrm{Proj} \left(\bigoplus_{n\in \N} H^0(X, O_X(n.N. (K_X+\Delta) \right).
 $$

\begin{lem}\label{pushforward2} 
Notations from Lemma \ref{pushforward1}. Fix $\omega_J$  a K\"ahler form on $J$ . 
 There exists $\epsilon >0$ such that the  pushforward measure $w(h):=j_* v(h)$
 has $L^{1+\epsilon}$ density with respect to $\omega^{\dim J}_J$. 
\end{lem}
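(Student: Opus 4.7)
My strategy is to reduce the question to Lemma \ref{pushforward1} by combining it, via a H\"older argument, with the klt integrability of $v(h)$ on a log resolution.

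First I would choose a log resolution $\pi : X' \to X$ of $(X,\Delta)$, with $X'$ smooth and compact, and fix a K\"ahler form $\omega_{X'}$ on $X'$. Writing $\pi^*(K_X+\Delta) - K_{X'} = \sum_i a_i E_i$ with all $a_i < 1$ (the klt condition), the density of $\pi^* v(h)$ with respect to $\omega_{X'}^{\dim X}$ is locally of the form $\prod_i |s_i|^{-2 a_i}$ times a positive smooth factor, where $s_i$ is a local equation of $E_i$. Since each $a_i < 1$ and $X'$ is compact, there exists a uniform $\delta > 0$ such that $\pi^* v(h) = f \cdot \omega_{X'}^{\dim X}$ with $f \in L^{1+\delta}(X', \omega_{X'}^{\dim X})$, by the standard klt integrability computation (cf.\ \cite[\S 6]{EGZ09}). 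Because $\pi$ is proper birational and $v(h)$ is absolutely continuous, $\pi_* \pi^* v(h) = v(h)$, hence setting $j' := j \circ \pi : X' \to J$ one has
\[
w(h) \;=\; j_* v(h) \;=\; j'_*\bigl(f \cdot \omega_{X'}^{\dim X}\bigr).
\]

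Next I would apply Lemma \ref{pushforward1} to $j'$, which is a proper surjective holomorphic map between normal K\"ahler spaces: for each precompact $U \Subset J$ there exists $\epsilon_0 > 0$ with $j'_* \omega_{X'}^{\dim X} = \tau \cdot \omega_J^{\dim J}$ and $\tau \in L^{1+\epsilon_0}(U)$.

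Finally I would combine the two integrability properties by duality. Writing $w(h) = \sigma\, \omega_J^{\dim J}$ and testing against $\phi \in C_c(U)$ with $\phi \geq 0$, the identity
\[
\int_U \phi\,\sigma\,\omega_J^{\dim J} \;=\; \int_{(j')^{-1}(U)} (\phi \circ j')\, f\, \omega_{X'}^{\dim X}
\]
is estimated by two successive H\"older inequalities: first with conjugate exponents $(1+\delta,(1+\delta)/\delta)$ to separate $f$ from $\phi \circ j'$, then, after pushing $|\phi\circ j'|^{(1+\delta)/\delta}$ forward through $j'$, with conjugate exponents $(1+\epsilon_0,(1+\epsilon_0)/\epsilon_0)$ to separate $\tau$ from $|\phi|$. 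This yields a bound $C\|\phi\|_{L^q(U)}$ with $q := (1+\delta)(1+\epsilon_0)/(\delta\epsilon_0) > 1$, so duality gives $\sigma \in L^{q'}(U)$ for $q' := q/(q-1) > 1$, and one may take $\epsilon := q'-1$. I expect the only delicate point to be the uniform extraction of $\delta > 0$ from the klt condition, which reduces to the simple-normal-crossings setting and compactness of $X'$; the H\"older bookkeeping is then routine.
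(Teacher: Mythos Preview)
Your argument is correct and the H\"older bookkeeping checks out: with $p=(1+\delta)/\delta$ one has $\|\phi\circ j'\|_{L^p(\omega_{X'}^{\dim X})}^p=\int_U |\phi|^p\,\tau\,\omega_J^{\dim J}\le \|\tau\|_{L^{1+\epsilon_0}}\|\phi\|_{L^{q}}^p$, hence $\int_U \phi\,\sigma \le C\|\phi\|_{L^q}$ with $q=(1+\delta)(1+\epsilon_0)/(\delta\epsilon_0)$, and duality gives $\sigma\in L^{q'}(U)$ as you claim. Since $J$ is compact, a finite cover by such $U$'s yields a global exponent.

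The paper, however, takes a different route. Rather than pulling back to a log resolution (where the density is only $L^{1+\delta}$) and then reconciling two integrability exponents via H\"older, it invokes the covering trick of \cite[section 3.4]{EGZ09}: there is a generically finite branched cover $f:X'\to X$ with $X'$ a smooth K\"ahler orbifold such that $f^*v(h)$ has \emph{continuous} (hence bounded) density with respect to $\omega_{X'}^{\dim X'}$. One then applies Lemma~\ref{pushforward1} (trivially extended to orbifolds) directly to $j\circ f$, and the continuous density passes straight through without any H\"older juggling. Your approach trades this external input for an elementary argument using only the log resolution and the standard klt integrability; the paper's approach is shorter once the covering is granted, but relies on a less widely known construction.
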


\begin{proof}
 This follows from Lemma \ref{pushforward1} in case $\Delta=\emptyset$ and $X$ has canonical singularities. In general,
one can find a generically finite  branched covering $f:X'\to X$ where $X'$ is a smooth K\"ahler orbifold  such that $f^*  v(j^*h)$ becomes 
a density with non-negative continuous coefficients, see \cite[section 3.4]{EGZ09}. It is straightforward 
to see that Lemma \ref{pushforward1} is also valid with $X$ replaced by an orbifold. Lemma  \ref{pushforward2} follows. 
\end{proof}

\begin{lem}\label{invar}
Let $h'=he^{-\phi}$, where $\phi\in C^{\infty}(J,\R)$. 
If $\psi, \psi'\in C^0(J,\R)$ are the unique solutions of the Monge-Amp\`ere equations on $J$
$$
 (\chi +dd^c \psi)^{\dim J}= e^{\psi} w(h), \  (\chi +dd^c \psi')^{\dim J}= e^{\psi'} w(h'),
$$
then $he^{-\psi}=h'e^{-\psi'}$. Hence 
$$
\theta_{can}=\chi +dd^c \psi \; \;
T_{can}=j^* \theta_{can} \in \{ K_X+\Delta\}
\;  \text{ and } \; v_{can}=e^{-\psi \circ j} v(j^*h)
$$
are all independent of $h$. 
\end{lem}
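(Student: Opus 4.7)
The plan is a direct substitution followed by an appeal to uniqueness. The core observation is that the perturbation $h\mapsto h'=he^{-\phi}$ causes two compensating changes: the reference K\"ahler form on $J$ shifts by $dd^c\phi$ (since $c_1(h')=c_1(h)+dd^c(\phi\circ j)$), while the measure $w(h)$ is multiplied by $e^\phi$. Both effects are precisely absorbed by the substitution $\psi\mapsto\psi-\phi$ in the Monge-Amp\`ere equation, so uniqueness of the continuous solution forces $\psi'=\psi-\phi$, from which every invariance in the statement falls out mechanically.

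First I would unpack the transformation rules. From $\|\gamma\|^2_{h'}=\|\gamma\|^2_h\,e^{-\phi\circ j}$ one reads off $v(h')=e^{\phi\circ j}v(h)$ on $X$; pulling the scalar factor out of the fiber integration (valid since $\phi\circ j$ is constant along the fibers of $j$) yields $w(h')=e^\phi w(h)$ on $J$. At the same time, the standard identity $c_1(he^{-\phi})=c_1(h)+dd^c\phi$ gives $c_1(h')=j^*(\chi+dd^c\phi)$. Consequently, the reference K\"ahler form implicit in the Monge-Amp\`ere equation for $h'$ is $\chi+dd^c\phi$ rather than $\chi$; the statement uses the same symbol $\chi$ because its cohomology class, which is all that is needed for the uniqueness theorem of \cite{EGZ09}, is fixed.

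Setting $\psi':=\psi-\phi$ and using the Monge-Amp\`ere equation satisfied by $\psi$, I then compute
\begin{align*}
 \big((\chi+dd^c\phi)+dd^c\psi'\big)^{\dim J}
 &= (\chi+dd^c\psi)^{\dim J}=e^{\psi}w(h)\\
 &= e^{\psi'}\cdot e^{\phi}w(h)=e^{\psi'}w(h'),
\end{align*}
so $\psi-\phi$ is a continuous solution of the Monge-Amp\`ere equation for $h'$; the uniqueness from \cite{EGZ09} therefore forces $\psi'=\psi-\phi$. Substituting back gives
\[ h'e^{-\psi'\circ j}=he^{-\phi\circ j}\,e^{-(\psi-\phi)\circ j}=he^{-\psi\circ j}, \]
and hence $\theta_{can}=(\chi+dd^c\phi)+dd^c\psi'=\chi+dd^c\psi$, the current $T_{can}=j^*\theta_{can}$, and the measure $v_{can}=e^{-\psi\circ j}v(h)$ are all seen to be intrinsic to $(X,\Delta)$; in particular the last identity follows from $e^{-\psi'\circ j}v(h')=e^{-(\psi-\phi)\circ j}e^{\phi\circ j}v(h)=e^{-\psi\circ j}v(h)$.

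The main obstacle is really just the bookkeeping point highlighted above: one has to keep track of how the reference K\"ahler form on $J$ shifts along with $h$, so that the Monge-Amp\`ere equation for $h'$ is solved against $\chi+dd^c\phi$ rather than $\chi$. Once this is clarified, the lemma reduces to the one-line algebraic check above combined with the uniqueness of continuous solutions to the Monge-Amp\`ere equation proved in \cite{EGZ09}.
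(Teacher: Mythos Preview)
Your proof is correct and follows the same route as the paper's one-line argument, which simply cites the identity $w(he^{-\phi})=e^{\phi}w(h)$ and leaves the rest implicit. You have unpacked the substitution $\psi'=\psi-\phi$ and the appeal to uniqueness that the paper omits, and in particular you correctly flag the bookkeeping point that the reference form for $h'$ is $\chi+dd^c\phi$ rather than $\chi$ (the paper's notation suppresses this shift).
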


\begin{proof}   
 This follows from the identity $w(he^{-\phi})=e^{\phi}w(h)$. We refer the reader to \cite{EGZ09,CGZ13} for the 
existence and uniqueness
of continuous solutions to these Monge-Amp\`ere equations.
\end{proof}

\begin{defi} \label{stcurrent}
 The  adapted volume form $v_{can}=e^{-\psi \circ j} v(j^*h)$ 
is the Song-Tian canonical volume form on $(X,\Delta)$. 

Similarly we say that  $T_{can}{:=}T_{can}(X,\Delta)$ 
is the canonical semiflat representative of $K_{X, \Delta}=K_X+\Delta$ and that 
$\theta_{can}{:=} \theta_{can}(X,\Delta)$ 
is the twisted K\"ahler-Einstein metric attached to $(X,\Delta)$.
\end{defi}

\subsubsection{Variant of the main construction} \label{section:var}
For technical reasons, we have to introduce a variant of this construction. 

\begin{defi}
 Let $L$ be a holomorphic line bundle on a complex analytic space $M$. A hermitian metric with divisorial singularities on 
 $L$ is a singular hermitian metric $h$ such that $h$ is smooth on the complement of an effective Cartier divisor $E$  and there exists a smooth hermitian metric $\bar h$
 on $L\otimes \mathcal{O}(-E)$ coinciding with $h$ via the isomorphism $L\otimes\mathcal{O}(-E)|_{M\setminus E} \to L|_{M\setminus E}$ given by the 
 multiplication by the  section $s_E$ whose zero divisor is $E$. 
 
 The Cartier divisor $E$ is well defined and is called the polar divisor of $h$. One has $h=\bar h. h_E$ where $h_E$ is the singular hermitian metric 
 on $E$ whose curvature current is $E$. 
 
\end{defi}

Similarly a hermitian metric with divisorial singularities on a $\Q$ line bundle $L'$ is a hermitian metric with divisorial singularities on some line bundle of the form $L'^{\otimes N}, \ n\in \N^*$. 
The polar divisor is then an effective $\Q$-Cartier divisor.

Let $(X,\Delta)$ be a compact K\"ahler klt pair and $h$ a hermitian 
metric with divisorial singularities on the $\Q$-line bundle  $L=(\mathcal{O}(N(K_X+\Delta))^{1/N}$, $N$ being the index of the 
$\Q$-Cartier divisor $K_X+\Delta$.

Mimicking the previous construction, we define a measure with $L^1$ non negative density:
$$v(h)=\frac{c_n\gamma\wedge\bar \gamma}{\| \gamma \|^2_{h}}$$ 
where $\gamma$ is as above. The poles of $h$ give rise to zeroes of $v(h)$ on the polar divisor $E$ of $h$.

Assume furthermore that the first Chern form of $\bar h$ is $j^*\chi$ where: 
\begin{itemize}
 \item  $j: X \to J$ a surjective connected proper holomorphic map,  $J$ being a normal K\"ahler space, 
 \item 
$\{ K_{X}+\Delta-E\}=j^*\{\chi\}$ is the pull back from a smooth semi-K\"ahler class 
$\{ \chi\}$ with positive volume.
\end{itemize}

\begin{defi}
 Such quadruples  $(X,\Delta, E, j)$ will be called {\em  Iitaka models}. 
 \end{defi}
 
Lemmas \ref{pushforward2} and \ref{invar} still apply. 
Definition \ref{stcurrent} can then laid out as above. 
We shall need to emphasize dependence on the model, so we use the notations 
$$
\theta_{can}{:=} \theta_{can}(X,\Delta,E,j)
\; \; \text{ and }
\; \; 
\chi {:=}\chi_J.
$$

We remark that $E$ and $\Delta$ may have common components and that Iitaka models $(X,\Delta, E, j)$ and $(X, \Delta-\min(\Delta, E), E-\min(\Delta,E), j)$ are plainly different. 
However their $\theta_{can}$ agree. 

One could also work more generally with completely general singular hermitian metrics with  positive curvature current but we prefered to avoid excessive generality.

\subsubsection{Base change invariance}

The modified construction enjoys
a strong base change invariance. 

\begin{defi} \label{def:itmod} 
Let $(X,\Delta, E, j)$ be an Iitaka model. Let $\psi: \hat{J} \to J$ be a generically finite surjective morphism, $\hat{J}$ being normal K\"ahler. A base change of $(X,\Delta, E, j)$ along $\psi$
is an  Iitaka model $(\hat X, \hat{\Delta},\hat{E}, \hat{j})$ endowed with a morphism $\pi:\hat{X}\to X$ in such a way that: 
\begin{enumerate}
\item $\hat{j}: \hat{X}\to \hat{J}$ is a holomorphic connected fibration of compact K\"ahler spaces and $(\hat{X},\hat{\Delta})$ is klt
 \item We have a commutative square:
$$
\xymatrix@1{
\hat{X}\ar[r]^{\pi}\ar[d]_{\hat{j}} &X\ar[d]_{j} \\
\hat{J}\ar[r]^{\psi}& J\\
}
$$
\item There is a dense Zariski open set $J^0\subset J$ such that,  with $\hat{J}^{0}=\psi^{-1}(J^0)$,  the natural map from $\hat{X}^0= (\psi\circ \hat{j})^{-1}(J^0)$ 
to $X\times_J \hat{JÃ }^0$ is a proper bimeromorphic holomorphic mapping.
\item $K_{\hat{X}}+\hat{\Delta}-\hat{E}\sim_{\Q} \pi^*(K_X+\Delta -E)$.
\end{enumerate}
\end{defi}

The construction of $w(h)$ being base-change invariant thanks to condition (4) in definition \ref{def:itmod}, the following is simple:

\begin{lem}\label{lemma:bcinv} 
Let $(\hat{X},\hat{\Delta},\hat{E}, \hat{j})$ be a base change of $(X,\Delta,E,j)$ along $\psi$. Then
$\theta_{can}(\hat{X}, \hat{\Delta}, \hat{E}, \hat{j})=\psi^* \theta_{can} (X,\Delta,E, j)$ and
 $$ 
T_{can}(\hat{X}, \hat{\Delta}, \hat{E}, \hat{j})=\pi^* T_{can}(X,\Delta, E,j).
$$
\end{lem}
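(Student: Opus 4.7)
The plan is to reduce everything to the uniqueness of continuous solutions of the Monge-Amp\`ere equation on $\hat J$ by showing that the solution on $J$ pulls back to a solution on $\hat J$.

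First I would fix a hermitian metric $h$ with divisorial singularities on the relevant $\Q$-line bundle $L = \mathcal O(N(K_X + \Delta))^{1/N}$ over $X$, whose $\bar h$-piece has curvature $j^*\chi$. Using condition (4) of Definition \ref{def:itmod}, namely $K_{\hat X} + \hat \Delta - \hat E \sim_{\Q} \pi^*(K_X + \Delta - E)$, the pullback $\pi^* h$ defines (after a natural identification of $\Q$-line bundles) a hermitian metric $\hat h$ with divisorial singularities on the corresponding bundle over $\hat X$, whose smooth-part curvature is $\hat j^*\hat \chi$ with $\hat \chi = \psi^*\chi$. By construction $v(\hat h) = \pi^* v(h)$ where it makes sense.

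The main step is the base-change formula
\[
\hat w(\hat h) \;=\; \hat j_* v(\hat h) \;=\; \psi^*\bigl(j_* v(h)\bigr) \;=\; \psi^* w(h),
\]
understood as an equality of measures on $\hat J^0$ minus the ramification locus of $\psi$ (both sides are $L^{1+\epsilon}$ by Lemma \ref{pushforward2}, so extending across the thin closed set is harmless). To establish this, restrict to the dense Zariski open set $J^0 \subset J$ of condition (3): on $\hat J^{0}=\psi^{-1}(J^0)$, the map $\hat X^0 \to X\times_J \hat J^0$ is proper bimeromorphic, and away from the ramification of $\psi$ the scheme-theoretic fiber of $\hat j$ at $\hat y$ is canonically bimeromorphic to the fiber of $j$ at $\psi(\hat y)$, via $\pi$. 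Since $v(\hat h) = \pi^*v(h)$ and fiber integrals are bimeromorphic invariants (the integrand has $L^1$ density and the exceptional locus has Lebesgue measure zero), the fiberwise integrals match, proving the base-change identity by Fubini.

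With this identity in hand, let $\phi \in C^0(J)$ be the solution of $(\chi + dd^c \phi)^{\dim J} = e^{\phi} w(h)$ supplied by \cite{EGZ09,CGZ13}. Pull this equation back via the generically finite morphism $\psi$: the left side becomes $(\psi^*\chi + dd^c(\phi \circ \psi))^{\dim \hat J} = (\hat\chi + dd^c(\phi \circ \psi))^{\dim \hat J}$, and the right side becomes $e^{\phi \circ \psi}\,\psi^* w(h) = e^{\phi\circ\psi}\hat w(\hat h)$. Thus $\phi \circ \psi$ is a continuous solution of the Monge-Amp\`ere equation on $\hat J$ attached to $(\hat X, \hat\Delta, \hat E, \hat j)$; by the uniqueness statement in Lemma \ref{invar}, it must coincide with $\hat\phi$. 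Therefore
\[
\theta_{can}(\hat X,\hat\Delta,\hat E,\hat j) \;=\; \hat\chi + dd^c\hat\phi \;=\; \psi^*(\chi + dd^c\phi) \;=\; \psi^*\theta_{can}(X,\Delta,E,j),
\]
and pulling back by $\hat j$ and using $\psi \circ \hat j = j \circ \pi$ yields the corresponding equality for $T_{can}$.

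The main obstacle I anticipate is the rigorous justification of the measure base-change formula $\hat w(\hat h) = \psi^* w(h)$: the bimeromorphic (not isomorphic) identification between $\hat X^0$ and $X\times_J \hat J^0$ in condition (3), together with the divisorial singularities of $h$ and the possible ramification of $\psi$, means one has to carefully restrict to a Zariski open set where everything is smooth and \'etale before extending by continuity of the $L^{1+\epsilon}$ densities. Once that formula is established, uniqueness of the Monge-Amp\`ere solution finishes the argument.
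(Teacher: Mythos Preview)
Your proposal is correct and follows exactly the approach the paper indicates: the paper gives no formal proof of this lemma, only the one-line remark preceding it that ``the construction of $w(h)$ being base-change invariant thanks to condition (4) in Definition~\ref{def:itmod}, the following is simple.'' Your argument makes this precise by showing $\hat w(\hat h)=\psi^* w(h)$ via condition (4) and the fiberwise identification from condition (3), then invoking uniqueness of the continuous Monge--Amp\`ere solution, which is precisely the intended reasoning.
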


\begin{lem} 
Notations as in Definition \ref{def:itmod}. 
Assume $\psi$ is proper birational. Then for every log-resolution of singularities $\pi:\hat{X} \to X$ such that the meromorphic map $\hat{j}:=\psi^{-1}\circ j: \hat{X}\to \hat{J}$ 
is holomorphic there exists a unique $\Q$-Cartier divisor $\hat{\Delta}$ such that $(\hat{X},\hat{\Delta},\hat{E}, \hat{j})$ is a base change of $(X,\Delta,E, j)$ along $\psi$. 
\end{lem}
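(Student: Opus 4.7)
The plan is to read off the construction directly from the standard discrepancy formula for the log resolution $\pi$, and then to verify each of the four clauses of Definition \ref{def:itmod}.

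Clauses (2) and (3) are essentially automatic. Clause (2) is built into the very definition $\hat{j} = \psi^{-1} \circ j$. For (3), since $\psi$ is proper birational, there is a dense Zariski open $J^0 \subset J$ over which $\psi$ is an isomorphism; setting $\hat{J}^0 := \psi^{-1}(J^0)$, the fiber product $X \times_J \hat{J}^0$ identifies canonically with $X^0 := j^{-1}(J^0)$, and the induced morphism $\hat{X}^0 \to X \times_J \hat{J}^0$ is simply the restriction of $\pi$, hence a proper bimeromorphic holomorphic mapping.

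The substance lies in clauses (1) and (4). I would write the discrepancy identities
\[
K_{\hat{X}} + \pi^{-1}_{*} \Delta \;=\; \pi^{*}(K_X + \Delta) + \sum_i b_i\, F_i, \qquad \pi^{*} E \;=\; \pi^{-1}_{*} E + \sum_i e_i\, F_i,
\]
with $F_i$ running through the $\pi$-exceptional prime divisors, and note that $b_i > -1$ by klt-ness of $(X,\Delta)$ and $e_i \geq 0$ by effectivity of $E$. I would then set
\[
\hat{\Delta} := \pi^{-1}_{*} \Delta + \sum_{b_i + e_i < 0} (- b_i - e_i)\, F_i, \qquad \hat{E} := \pi^{-1}_{*} E + \sum_{b_i + e_i \geq 0} (b_i + e_i)\, F_i.
\]
A direct substitution yields the equality $K_{\hat{X}} + \hat{\Delta} - \hat{E} = \pi^{*}(K_X + \Delta - E)$ of $\Q$-Cartier divisors, proving (4). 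For (1), the coefficients of $\hat{\Delta}$ on $\pi^{-1}_{*} \Delta$ are those of $\Delta$ and hence lie in $(0,1)$; the exceptional coefficients $-(b_i+e_i)$ also lie in $(0,1)$ by virtue of $b_i > -1$ and $e_i \geq 0$. Combined with smoothness of $\hat{X}$ and the simple normal crossing support of $\hat{\Delta}$ (both intrinsic to a log resolution), this yields the klt property. Connectedness of the fibers of $\hat{j}$ follows from that of $j$ together with the birationality of $\psi$.

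The only genuinely delicate point is precisely the klt coefficient bound on $\hat{\Delta}$, for which the klt hypothesis on $(X,\Delta)$ is essential: a merely log canonical hypothesis would allow coefficients $\geq 1$. Uniqueness of $\hat{\Delta}$ is then immediate: once $\hat{E}$ is fixed, condition (4) determines $\hat{\Delta}$ as a $\Q$-Cartier divisor, and the klt constraint removes the residual freedom to redistribute $\pi$-exceptional mass between $\hat{\Delta}$ and $\hat{E}$.
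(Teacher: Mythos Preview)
Your existence argument is essentially the paper's: write the discrepancy identity for the log resolution and split the exceptional contribution between $\hat{\Delta}$ and $\hat{E}$. The only difference is the splitting rule---the paper decides by the sign of the discrepancy $a_i$ alone (sending all of $\pi^*E$ into $\hat{E}$ and setting $\hat{\Delta}=\hat{\Delta}'+\sum_{a_i<0}(-a_i)E_i$, modulo an evident sign typo in the printed formula), whereas you decide by the sign of $b_i+e_i$. Both choices yield valid Iitaka models satisfying (1)--(4), and both give $\hat{\Delta}=\emptyset$ when $\Delta=\emptyset$ and $X$ has canonical singularities.

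Your uniqueness argument, however, does not go through. The assertion that ``the klt constraint removes the residual freedom to redistribute $\pi$-exceptional mass between $\hat{\Delta}$ and $\hat{E}$'' is false: on an exceptional prime $F_i$ the constraints read $d_i\in[0,1)$, $f_i\ge 0$, $d_i-f_i=-(b_i+e_i)$, and one may add any $t\in[0,1-d_i)$ simultaneously to $d_i$ and $f_i$ without leaving the admissible range. Concretely, if $b_i=-\tfrac12$ and $e_i=1$, your construction gives $(d_i,f_i)=(0,\tfrac12)$ while the paper's gives $(\tfrac12,1)$, and both are admissible. The paper's own proof does not address uniqueness either; only existence is actually established, and that is all that is used downstream (Corollary~\ref{goodmodel} and its sequels).
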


\begin{proof}
 By definition of klt singularities, we have 
 $$
 K_{\hat{X}}+\hat{\Delta}' \sim_{\Q}  \pi^* (K_X +\Delta) +\sum_i a_i E_i
 $$ 
 where $\hat{\Delta}'$ is the proper transform of $\Delta$ the $ E_i $ are the exceptional divisors and $a_i>-1$. 
 Then $\hat{\Delta}=\hat{\Delta}'+\sum_{-1<a_i<0} a_i E_i$ and $\hat{E}=\pi^*E+\sum_{a_i>0} a_i E_i$ satisfy our requirements.
 \end{proof}
 
\noindent  Observe that we can take $\hat{\Delta}=\emptyset$ if $\Delta=\emptyset$ and $X$ has canonical singularities. 
 
 \begin{coro} \label{goodmodel}
  If $(X,\Delta)$ is a nef abundant klt pair of non negative Kodaira dimension, then $(X,\Delta, \emptyset, j)$ has a base change $(\hat{X},\hat{\Delta}, \hat{E}, \hat{j})$ such that
  $\hat{J}$ is a log resolution of $J$, $\hat{X}$ is a
  log resolution of $X$, $\hat{j}:\hat{X}\to\hat{J}$ is smooth on the complement of a simple normal crossing divisor on $\hat{J}$. 
 \end{coro}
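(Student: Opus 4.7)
The plan is to produce the desired base change in two stages: first modify the base of $j$ so as to turn the discriminant of the Iitaka fibration into a SNC divisor, and then invoke the preceding lemma to promote this into a base change of Iitaka models.

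As a first step, I would pick, via Hironaka, any log resolution $\psi_0: \hat{J}_0 \to J$ together with a log resolution $\pi_0: \hat{X}_0 \to X$ high enough that the meromorphic composition $\psi_0^{-1} \circ j \circ \pi_0$ extends to a holomorphic map $\hat{j}_0: \hat{X}_0 \to \hat{J}_0$. By generic smoothness in characteristic zero and properness of $\hat{j}_0$, the discriminant locus $B_0 \subset \hat{J}_0$ (the image of the critical set of $\hat{j}_0$) is a proper closed analytic subset.

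Next, I would apply Hironaka's embedded resolution to $B_0 \subset \hat{J}_0$ to obtain a proper birational morphism $\psi_1: \hat{J} \to \hat{J}_0$ with $\hat{J}$ smooth and $\psi_1^{-1}(B_0) \cup \mathrm{Exc}(\psi_1)$ contained in a SNC divisor $D \subset \hat{J}$. Setting $\psi := \psi_0 \circ \psi_1$, I would then choose a further modification $\rho: \hat{X} \to \hat{X}_0 \times_{\hat{J}_0} \hat{J}$ of the main component of the fiber product, so that the composite $\pi := \pi_0 \circ \mathrm{pr}_1 \circ \rho: \hat{X} \to X$ is a log resolution of $X$ and the natural meromorphic map $\hat{j}: \hat{X} \to \hat{J}$ is everywhere defined. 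Over $\hat{J} \setminus D$ the morphism $\psi_1$ is an isomorphism and $\hat{j}_0$ is smooth, so the fiber product is already smooth there and $\rho$ can be taken to be an isomorphism over this locus. Since smooth morphisms are stable under base change, this forces $\hat{j}$ to be smooth over $\hat{J} \setminus D$.

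Finally, the preceding lemma applied to $\psi$ and $\pi$ delivers unique divisors $\hat{\Delta}$ and $\hat{E}$ such that $(\hat{X}, \hat{\Delta}, \hat{E}, \hat{j})$ is an Iitaka model and a base change of $(X, \Delta, \emptyset, j)$ along $\psi$; by construction $\hat{J}$ is a log resolution of $J$, $\hat{X}$ is a log resolution of $X$, and $\hat{j}$ is smooth on $\hat{J} \setminus D$. The main obstacle will be the simultaneity required in the construction of $\pi$: one needs $\hat{X}$ to be a log resolution of $X$, the map $\hat{j}$ to be holomorphic, \emph{and} $\rho$ to be an isomorphism over the already smooth locus. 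This is handled by the flexibility of Hironaka's algorithm, which can be arranged to modify only the preimage of a prescribed closed subset of the target, combined with the universal property of the fiber product.
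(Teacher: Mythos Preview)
Your proposal is correct and fleshes out exactly the argument the paper leaves implicit: the corollary is stated without proof, as an immediate consequence of Hironaka's theorem (applied first to $J$, then to make the discriminant SNC, then to $X$) together with the preceding lemma, and your two-stage construction is the standard way to unpack this. The one point worth stating slightly more carefully is that $\psi_1$ is an isomorphism over $\hat{J}\setminus D$ because the exceptional locus of a proper birational morphism between smooth varieties is purely divisorial; once that is noted, your claim that the fiber product is already smooth over $\hat{J}\setminus D$ and that Hironaka can be chosen as an isomorphism there goes through without difficulty.
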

 
In order to have lighter notations, we will denote Iitaka models of the form $(X,\emptyset, E,j)$ by $(X,E,j)$ starting with next lemma. 

\begin{lem} \label{exist:bc}
Notations as in Definition \ref{def:itmod}. If $X$ is smooth and $\gamma$ is etale over $J^0$ there is a
 base change $(\hat{X}, \hat{E}, \hat{j})$ of $({X}, {E}, {j})$ along 
 $\gamma$ such that $\hat{X}$ is smooth and $\hat{X}^{0}=X\times_{J} J^0$. 
\end{lem}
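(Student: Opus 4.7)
The plan is to take $\hat X$ to be a resolution of singularities of the analytic fibre product $W:=X\times_J \hat J$ (restricted to its main irreducible component with reduced structure) that is an isomorphism over the étale locus of $p:W\to X$, and then to produce $\hat E$ directly from the Jacobian of the resulting morphism $\pi:\hat X\to X$. I will first set up the geometry, then check connectedness of the new fibration, and finally verify condition $(4)$ of Definition~\ref{def:itmod}, which turns out to be almost trivial thanks to the simultaneous smoothness of $X$ and $\hat X$.

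First I form $W$ with its two projections $p:W\to X$ and $q:W\to\hat J$. Since $j$ is proper and $\gamma$ is proper and generically finite surjective, so is $p$, of degree $\deg(\gamma)$. Over $X^0:=j^{-1}(J^0)$ the morphism $p$ is étale because $\gamma$ is étale over $J^0$; in particular $W^0:=p^{-1}(X^0)=X\times_{J}\hat J^0$ is smooth. Applying Hironaka's desingularisation in the analytic category I obtain a proper bimeromorphic $\pi':\hat X\to W$ with $\hat X$ smooth and $\pi'$ an isomorphism over the smooth locus of $W$, hence in particular over $W^0$. Set $\pi:=p\circ\pi'$ and $\hat j:=q\circ\pi'$; the commutative square of Definition~\ref{def:itmod} is then immediate, and $\hat X^0=\pi^{-1}(X^0)=W^0=X\times_J\hat J^0$, giving condition $(3)$ with equality as required.

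For condition $(1)$, I argue that $\hat j$ has connected fibres as follows: a generic fibre of $\hat j$ is bimeromorphic to a generic fibre of $q$ restricted over $\hat J^0$, which is étale-isomorphic to a fibre of $j$ over a point of $J^0$, and hence is connected. Taking the Stein factorisation $\hat j=s\circ r$ with $s$ finite, the generic connectedness of the fibres of $\hat j$ forces $s$ to be birational; since $\hat J$ is normal, $s$ is therefore an isomorphism, and all fibres of $\hat j$ are connected. The pair $(\hat X,\emptyset)$ is trivially klt.

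The key step is condition $(4)$: producing an effective $\Q$-Cartier divisor $\hat E$ with $K_{\hat X}-\hat E\sim_{\Q}\pi^*(K_X-E)$. Here I exploit the fact that $X$ and $\hat X$ are both smooth of the same dimension and $\pi$ is surjective and generically finite: the top exterior power of $d\pi$ gives a nonzero morphism of line bundles $\pi^*K_X\to K_{\hat X}$, whose zero divisor is an effective integral ramification divisor $R_\pi$. This yields the Riemann--Hurwitz identity
\[
K_{\hat X}\sim \pi^*K_X + R_\pi,
\]
and setting $\hat E := \pi^* E + R_\pi$ produces an effective $\Q$-Cartier divisor satisfying $(4)$ by direct computation. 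The obstacle one might initially worry about --- controlling discrepancies coming from the singular intermediate space $W$ --- is entirely bypassed by applying Riemann--Hurwitz directly between the two smooth manifolds $\hat X$ and $X$, instead of going through $W$. The only technical care needed is to invoke Hironaka's desingularisation in a form that is an isomorphism over the smooth locus of $W$, which is a standard strengthening available also in the complex-analytic setting.
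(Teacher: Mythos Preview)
Your proof is correct and follows precisely the route indicated by the paper, whose proof is the one-line remark that ``the ramification divisor of $\hat X\to X$ needs to be absorbed in $\hat E$ for condition (4) in Definition~\ref{def:itmod} to hold.'' Your formula $\hat E=\pi^*E+R_\pi$ is exactly this, and the surrounding construction (main component of the fibre product, Hironaka resolution isomorphic over the \'etale locus, Stein factorisation for connectedness) simply spells out what the paper leaves implicit.
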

\begin{proof}
The ramification divisor of $\hat{X} \to {X}$ needs to be absorbed in $\hat{E}$ for condition (4) in definition \ref{def:itmod} to hold. 
\end{proof}

\subsection{Regularity for smooth abundant minimal models}
 
Definition \ref{stcurrent} above applies, with $\Delta=\emptyset$ and $E=\emptyset$, when $X$ is a smooth abundant minimal model and $j: X \to J(X)$ is its Iitaka fibration.
Note that $J(X)$ is a projective normal variety although $X$ need not be projective. 

The critical locus $Crit(j)$ of $j$ is the closed algebraic subset of $J$ defined as 
the union of the singular locus of $J$ and the critical locus of $j^{-1}(J^{smooth}) \to J^{smooth}$.

\begin{prop}\label{smoothreg}
 $T_{can}$ is $C^{\infty}$  on $j^{-1}(J-Crit(j))$, $\theta_{can}$ is $C^{\infty}$  on $J-Crit(j).$
\end{prop}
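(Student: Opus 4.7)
The plan is to reduce the statement to $C^\infty$-regularity of the potential $\psi$ on $U := J \sm \mrm{Crit}(j)$. Indeed, on $U$ the form $\theta_{can} = \chi + dd^c \psi$ is smooth as soon as $\psi$ is; and since $j : j^{-1}(U) \to U$ is a holomorphic submersion between complex manifolds, $T_{can} = j^* \theta_{can}$ then inherits $C^\infty$-regularity on $j^{-1}(U)$ by functoriality of pullback.

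The first step is to observe that the pushforward measure $w(h) = j_* v(h)$ has a smooth strictly positive density on $U$. Since $X$ is smooth, $\Delta = \es$ and $h$ is a smooth hermitian metric on $L$, the volume form $v(h) = v(j^*h)$ is $C^\infty$ and strictly positive on $X$. Over $U$, the Iitaka map is a proper holomorphic submersion between smooth manifolds with smooth compact fibers, so integration along the fibers yields a smooth positive density on $U$, as is classical.

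The second step is to upgrade $\psi \in C^0(J)$ to $\psi \in C^\infty(U)$ by regularization. Fix any $V \Sub U$ and approximate $w(h)$ by smooth strictly positive densities $w_\e$ on $J$ with $w_\e \equiv w(h)$ on $V$ for $\e$ small (possible because $w(h)$ is already smooth on $U$ by Step~1). By Yau's theorem---applied on a log-resolution $\hat J \to J$ if necessary---the \MA equation $(\chi + dd^c \psi_\e)^{\dim J} = e^{\psi_\e} w_\e$ admits a smooth solution $\psi_\e$, and the stability estimates of \cite{EGZ09, CGZ13} force $\psi_\e \to \psi$ uniformly on $J$. On $V$ both $\chi$ and $\log w_\e = \log w(h)$ are smooth with uniform bounds, so P\u{a}un's local Laplacian estimate \cite{Paun08} produces a uniform bound $\mrm{tr}_\chi (\chi + dd^c \psi_\e) \le C_V$. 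Evans--Krylov then gives uniform $C^{2,\al}_{\mrm{loc}}$ bounds, and Schauder bootstrap gives uniform $C^{k,\al}_{\mrm{loc}}$ bounds for every $k$; passing to a diagonal subsequential limit identifies $\psi|_V$ as smooth, and since $V \Sub U$ was arbitrary, $\psi \in C^\infty(U)$.

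The delicate point is the \emph{local} character of P\u{a}un's Laplacian estimate: the textbook Aubin--Yau trick applies the maximum principle globally on a compact manifold, whereas here $V$ is only relatively compact in $U \sub J$. P\u{a}un \cite{Paun08} handles this through an explicit cutoff-plus-barrier construction, and in our situation its hypotheses---smoothness of $\chi$, a lower bound on its Ricci form, a uniform $L^\infty$ control of $\psi_\e$, and smoothness of the right-hand side on $V$---all hold uniformly in $\e$. With this local bound in hand, the elliptic bootstrap proceeds without further difficulty.
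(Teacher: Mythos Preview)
Your argument has a genuine gap in Step~2. P\u{a}un's Laplacian estimate \cite{Paun08} is not a local estimate obtained by a ``cutoff-plus-barrier construction''; it is a global maximum-principle argument carried out on a compact manifold. One works on a resolution $\hat J$ of $J$ (where $\pi^*\chi$ is only semi-positive and big, so incidentally Yau's theorem does not apply and your $\psi_\e$ are not globally smooth) and applies the maximum principle to a quantity of the form $\log\mathrm{tr}_\omega\omega'-A\psi_\e+A\rho$, with $\rho$ a fixed $\pi^*\chi$-psh function having analytic singularities along the non-ample locus. The maximum is attained at some point $x_0$ in the ample locus, and at $x_0$ one must control $dd^c\log w_\e$ from below. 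The resulting bound on $\hat V$ therefore depends on a \emph{global} lower bound for $dd^c\log w_\e$ on $\hat J$ (or, in the refined version of \cite[Theorem~10.1]{BBEGZ}, on a global expression $\log w_\e=\psi^+-\psi^-$ with $\psi^\pm$ quasi-psh and $\psi^-$ of zero Lelong numbers). But your $w_\e$ must converge to $w(h)$ for the stability estimate to force $\psi_\e\to\psi$, and the density of $w(h)$ may vanish outside $U$ (e.g.\ along exceptional divisors of $\hat J\to J$ or over singular fibres of $j$); hence $dd^c\log w_\e$ cannot be bounded below uniformly in $\e$, the point $x_0$ may drift towards that bad set, and the uniform Laplacian bound on $\hat V$ collapses.

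This is exactly why the paper's proof is as elaborate as it is. The sequence of base changes and cyclic covers (Corollary~\ref{goodmodel}, Lemma~\ref{lem:ct}, and Step~3 of the proof) serves to reach a situation in which Kawamata's Hodge-theoretic result \cite{Kaw2} exhibits the \emph{global} density as $d=\|\psi\|^2 e^{-\phi^-}$ with $\phi^-$ plurisubharmonic of zero Lelong numbers and smooth over $\hat J^0$. Only with this global structure in hand does P\u{a}un's estimate apply to the original, unapproximated equation and yield smoothness on $\hat J^0$. Your Step~1 --- that $w(h)$ has smooth positive density on $U$ because $j$ is a submersion there --- is correct and is implicitly used in the paper as well, but by itself it does not supply the global control the Laplacian estimate needs.
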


\begin{proof}
{\bf {Step 1.}} Smoothness of $T_{can}$ on $j^{-1}(J-Crit(j))$ does not follow from \cite{EGZ09} but a proof is given in \cite{ST12}. 
We  give here an alternative proof, using covering tricks to simplify the geometric situation, to prepare for later generalizations.

 Choose a base change $(\hat{X}, \hat{E}, \hat{j})$ of $(X, \emptyset, j)$ as in Corollary \ref{goodmodel} 
 with the additional property that $J^0=J-Crit(J)$, $\hat{J}^0=J^0$ and thus $\hat{X}^0=X^0$ and we denote by $\hat{D}$ 
 the reduced simple normal crossing divisor $\hat{J}\setminus \hat{J}^0$.
We can find a big semiample $\Q$-Cartier divisor $\hat{A}$  on $\hat{J}$ such that  $K_{\hat{X}}-\hat{E} \sim_{\Q} (\hat{j})^* \hat{A}$. 

One has $\hat{E}=\sum_E a_E E$  with $a_E\in \Z_{> 0}$ and $\hat{j}(E)\subset \hat{D}$. 
One can also assume $\hat{J}^0\subset \mathrm{Amp}(\hat{A})$. 
In particular, the fibers  of $\hat{j}$ over $\hat{J}^0$ are smooth and have a torsion canonical bundle.

For $N$ divisible enough so that $N\hat{A}$ is Cartier and $N(K_ {\hat X}-\hat{E}) \sim N\hat{j}^*A$,
we have $\eta_N: \hat{j}^* O_{\hat{J}}(N\hat{A}) \to O_X (N.(K_{\hat{X}}-\hat{E}))$
the natural isomorphism of invertible sheaves. 
Let $\alpha$ be a local generator of $O_{\hat{J}}(N.\hat{A})$, let  $\| . \|$ be a smooth hermitian metric on $O_{\hat{J}}(N.\hat{A})$ whose curvature form is $N\chi_{\hat{J}}$
and denote by $\mu=\mu(h)$ the pushforward measure $$\mu=\hat{j}_*
\epsilon \frac{\eta_N( \hat{j}^*\alpha)^{1/N} \wedge 
\overline{ \eta_N( \hat{j}^*\alpha)^{1/N}}}{\|\alpha\|^{2/N}}.$$
Then $\theta_{can}(\hat{X}, \hat{E}, \hat{j})=\chi_{\hat{J}} + dd^c\phi$ satisfies: 
$$ (\chi_{\hat{J}} + dd^c\phi)^{\kappa(X)}= e^{\phi} \mu.
$$
One has locally on $\hat{J}$,  $\mu=d. e^{g}. |\beta|^2$ where $\beta$ is some local generator of the sheaf of
holomorphic canonical forms on $\hat{J}$,  $g$ is smooth and $d$ is the Hodge norm: 
$$
d(y)=\int_{X_y} |\frac{\eta_N( \hat{j}^*{\alpha} )^{\frac{1}{N}}} {\beta})|^2.
$$

We first treat the case $N=1$ in the next step.

\smallskip

\noindent  {\bf {Step 2.}} 
If we further assume that $\hat{j}:\hat{X}\to\hat{J}$ has unipotent local monodromy, we can apply  \cite[Theorem 1.1 (3)]{Kaw2}
to the effect that $d=\| \psi \|^2e^{-\phi^-}$ where $\phi^-$ is a local plurisubharmonic function with zero Lelong numbers  smooth outside $\hat{D}$ and $\psi$ is holomorphic. 
Indeed $\eta_1(\hat{j}^*\alpha)/\beta$ can be interpreted as a local section of the holomorphic line bundle $\hat{j}_*\omega_{\hat{X}|\hat{J}}$ and may vanish on $Crit(\hat{j})$
and $\phi^-$ the local potential of the 
Hodge norm has zero Lelong numbers. 

Then, we can  apply P\u{a}un's Laplacian estimate \cite{Paun08} to get that $\phi$ is smooth on $\hat{J}^0$, see also \cite[Theorem 10.1]{BBEGZ}. The exponential term $e^{\phi}$ in the Monge-Amp\`ere equation
does not appear in loc. cit. but this
does not affect the proof. 

In order to get rid of the unipotent monodromy condition, we use the classical covering trick -for which it is needed that $\hat{J}$ be projective: 

\begin{lem} \label{lem:ct} \cite{Kaw1}
There is a smooth ramified covering space $\gamma: Z \to \hat{J}$ whose ramification divisor $\hat{D}^+$
is a simple normal crossing divisor containing $\hat{D}$ such that the main component of $\hat{X}\times_{\hat{J}} Z$ has unipotent local monodromies.
Furthermore the allowable $\hat{J}\setminus \hat{D}^+$ form an open cover of $\hat{J}^0$ 
\footnote{One could take $\hat{D}^+=\hat{D}$ if one is ready to accept $Z$ to be an orbifold (a smooth Deligne-Mumford stack). }.
\end{lem}

We apply Lemma \ref{exist:bc} to produce a base change $(\hat{X}^*, \hat{E}^*, \hat{j}^*)$ of $(\hat{X}, \hat{E}, \hat{j})$ along 
 $\gamma$. The argument above yields smoothness of $\theta_{can} (\hat{X}^*, \hat{E}^*, \hat{j}^*)$ on the open set
 $Z\setminus \gamma^{-1}(\hat{D}^+)$ 
 and Lemma \ref{lemma:bcinv}
enables to descend to smoothness of $\theta_{can} (\hat{X}, \hat{E}, \hat{j})$ on $\hat{J}\setminus \hat{D}^+$. 
Lemma \ref{lem:ct} then yields that  $\theta_{can} (\hat{X}, \hat{E}, \hat{j})$ is smooth on $\hat{J}^0$. 

\smallskip

\noindent {\bf {Step 3.}}
To finish the proof, we need to  reduce to the $N=1$ case. Chose $N$ divisible enough and  $\sigma\in H^0(\hat{J}, O_{\hat{J}}(N \hat{A}))$ a  non zero global section. 
The standard  covering lemma enables to construct a base change  $\gamma$ etale over $\hat{J}\setminus \{ \sigma=0 \} \setminus G$
such that $\gamma^*\hat{A}$ is Cartier and  that $\gamma^*\sigma$ has a $N$-th root $\tau\in H^0(\hat{J}', O_{\hat{J}'}( A'))$ and the $\hat{J}\setminus \{ \sigma=0 \} \setminus G$ cover $\hat{J}\setminus \{ \sigma=0 \}$.
Applying lemma \ref{exist:bc}, smoothness of $\theta_{can}$ on $\hat{J}\setminus \{ \sigma=0 \}$ is reduced to prove smoothness on the non zero locus $\hat{J}^0$ of $\sigma$ in
the base-change invariant case where $\hat{A}$ is Cartier and has a non zero section $\sigma$. 

In that situation, the $N$-th power of the canonical sheaf $\omega_{\hat{X}^0}$ has $\eta_N(\sigma)$ as a 
non vanishing section and we can contruct an etale cyclic covering  $\hat{Y}^0 \to \hat{X}^0$ such that the  canonical sheaf $\omega_{\hat{Y}^0}$ is trivial
and the $N$-th power of a trivializing section $\eta_1^0(\sigma)$ is $\eta_N(\sigma)$. 
Since an etale covering of a quasi K\"ahler manifold is quasi K\"ahler by a slight variant of the Grauert-Remmert theorem, 
using Hironaka's theorem, we construct a Zariski open embedding $\hat{Y}^0 \to \hat{Y}$ such that $\hat{Y}$ is smooth and both $\xi: \hat{Y}\to X$ and $k: \hat{Y}\to \hat{J}$ are holomorphic. 

Denote the Stein factorization of $k$ by
$\hat{j}': \hat{Y}\to \hat{J}'$. By construction, $\hat{J}'$ is etale over the non zero locus of $\sigma$. 
Also by construction, we can set $\hat{\Delta}'=\emptyset$ $\hat{F}=K_{\hat{Y}/ X}$ the jacobian divisor of $\xi$ to the effect that 
$(\hat{Y}, \hat{F}, \hat{k})$ is a Iitaka model satisfying $K_{\hat{Y}}- \hat{F}'\sim \xi^*(K_X)$. 

This relation implies that
 the measure $\mu'$ on $\hat{J}'$ is a multiple (by the degree of the etale  mapping induced by $\xi$ on the general fibres of $j$)
 of the measure $\mu$ associated to a base change $(\hat{X}', \hat{E}', j')$  of $(X,\emptyset, j)$ along $\hat{J'}\to J$. 
 This implies that $$\theta_{can}(\hat{Y}, \hat{F}, \hat{j}')= \theta_{can} (\hat{X}', \hat{E}', j').$$

The $N=1$ argument applies to $(\hat{Y}, \hat{F}, \hat{j}')$ to yield smoothness of $\theta_{can}$ on the preimage of $\hat{J}^0$. 
 Indeed $\eta_1^0(\sigma)$ extends to a holomorphic canonical form $\eta_1(\sigma)$ on $\hat{Y}$ since its $N$-th power does. For the same reason, its zero divisor satisfies
 $(\eta_1)\ge \hat{F}$. Now, there is a unique meromorphic section $\eta$ of $ Hom_{O_{\hat{Y}}} (O_{\hat{Y}}(j^* A'), O_{\hat{Y}}(K_{\hat{Y}} -\hat{F}))$ sending 
 $(\hat{j}')^*\sigma$ to $\eta_1(\sigma)$. One has $\eta^{\otimes N}=\eta_N$. Hence $\eta$ is a holomorphic isomorphism. 
 The proof is complete. 
 \end{proof}

\subsection{Abundant minimal models with canonical singularities}

Let $X$ be an abundant projective variety with only canonical singularities and a nef canonical class  and let
$J: X \to J(X)$ be the Iitaka fibration which is 
induced by $|NK_X|$ for $N$ sufficiently divisible. 
then there is a ample $\Q$-Cartier divisor $A$ on
 $J(X)$ such that $K_X \sim_{\Q} J^*A$.
 
 \begin{prop}
The canonical current  $\theta_{can}(X,\emptyset,j)$ is smooth on a non-empty Zariski open subset of $J$. 
 \end{prop}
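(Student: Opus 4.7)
The plan is to reduce the statement to Proposition \ref{smoothreg} by passing to a log resolution. I would choose a log resolution $\pi \colon \hat X \to X$ together with a log resolution $\psi \colon \hat J \to J$ of the base, arranged so that the meromorphic Iitaka map lifts to a holomorphic map $\hat j \colon \hat X \to \hat J$. The key observation, flagged in the paper immediately after Corollary \ref{goodmodel}, is that because $\Delta = \emptyset$ and $X$ has only canonical singularities, the discrepancies $a_i$ are all $\ge 0$, so we may set $\hat \Delta = \emptyset$ and absorb everything into the polar divisor $\hat E = \pi^* \emptyset + \sum_i a_i E_i$. This produces an Iitaka model $(\hat X, \emptyset, \hat E, \hat j)$ which is a base change of $(X, \emptyset, \emptyset, j)$ along $\psi$ in the sense of Definition \ref{def:itmod}.

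By Lemma \ref{lemma:bcinv}, $\theta_{can}(\hat X, \emptyset, \hat E, \hat j) = \psi^* \theta_{can}(X, \emptyset, j)$. Since $\psi$ is birational, any Zariski open $\hat J^0 \subset \hat J$ on which the left-hand side is smooth pushes forward, after intersecting with the locus where $\psi$ is a local isomorphism, to a non-empty Zariski open of $J$ on which $\theta_{can}(X, \emptyset, j)$ is smooth. Thus the proposition reduces to establishing smoothness of $\theta_{can}(\hat X, \emptyset, \hat E, \hat j)$ on a non-empty Zariski open of $\hat J$.

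The three-step proof of Proposition \ref{smoothreg} was written precisely for base-change Iitaka models of the form $(\hat X, \hat E, \hat j)$ coming from Corollary \ref{goodmodel}. Its ingredients --- P\u aun's Laplacian estimate in Step 2, the covering trick of Lemma \ref{lem:ct} to reduce to unipotent local monodromies, and the cyclic covering in Step 3 to reduce to the case $N=1$ --- apply verbatim in our situation, the essential hypothesis being that $\hat J$ be projective. This holds because $X$ is assumed projective, so $J$ is projective and hence so is its log resolution $\hat J$.

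The main obstacle, and really the only substantive point to verify, is that the local density of the pushforward measure $\mu$ on $\hat J$ attached to $(\hat X, \emptyset, \hat E, \hat j)$ still decomposes as $d \cdot e^g \cdot |\beta|^2$ with $d$ a Hodge norm of a local holomorphic section of $\hat j_* \omega_{\hat X / \hat J}$ having zero Lelong numbers. This is where canonicity of the singularities of $X$ is used decisively: because $\hat E$ has integer (non-negative) coefficients and $\hat \Delta = \emptyset$, the relevant section $\eta_N(\hat j^* \alpha)^{1/N}$ is a genuine holomorphic object rather than a $\Q$-multivalued one with fractional pole structure along components of $\hat E$, so Kawamata's theorem \cite[Theorem 1.1(3)]{Kaw2} applies without modification and the argument of Step 2 transfers unchanged. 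Once this is observed, the rest of the proof is a mechanical reuse of Proposition \ref{smoothreg}.
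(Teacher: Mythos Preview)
Your reduction to a smooth Iitaka model $(\hat X,\emptyset,\hat E,\hat j)$ via a log resolution is the right overall shape, and it matches what the paper does in its Steps~2--3. The gap is in the sentence ``because $\hat E$ has integer (non-negative) coefficients''. Canonicity of $X$ gives $a_i\ge 0$, but it does \emph{not} give $a_i\in\Z$: discrepancies of canonical singularities are in $\Q_{\ge 0}$, and they are integers only when $K_X$ is Cartier. In general $K_X$ is merely $\Q$-Cartier (think of terminal $3$-fold quotient singularities), so your $\hat E=\sum a_iE_i$ has genuinely fractional coefficients. This breaks the transfer of Proposition~\ref{smoothreg}: the isomorphism $\eta_N$ lands in $\mathcal O(N(K_{\hat X}-\hat E))$, and if $\hat E$ is not integral then $K_{\hat X}-\hat E$ is not a line bundle, so one cannot get down to $N=1$, cannot interpret $\eta_1(\hat j^*\alpha)/\beta$ as an honest section of $\hat j_*\omega_{\hat X/\hat J}$, and cannot invoke Kawamata's zero-Lelong-number statement for the Hodge metric. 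The covering trick of Step~3 in Proposition~\ref{smoothreg} only makes $\hat A$ Cartier on the base; it does nothing to clear the denominators of $\hat E$.

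This is exactly why the paper inserts its Step~1: pass first to the canonical index-one cover $X^1\to X$ from \cite{ypg}, which has canonical singularities, $K_{X^1}=\pi^*K_X$, and $K_{X^1}$ \emph{Cartier}. After that reduction the discrepancies of any resolution are integers, so your argument (which is essentially the paper's Steps~2--3) goes through. In short: your plan is correct once you prepend the index-one cover; without it the claimed integrality of $\hat E$ is false and the appeal to Proposition~\ref{smoothreg} fails.
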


\begin{proof}
{\bf{Step 1.}} {We first reduce to the case when $K_X$ is a Cartier divisor.}
Choose $K_X$ a Weil divisor on $X$ such 
that $\omega_X=O_X(K_X)$. Applying \cite[pp. 361-362]{ypg}, we find 
$X^1\to X$  a cyclic covering with only canonical singularities such that $K_{X^1}=\pi^* K_{X}$ and
$K_{X^1}$ is Cartier. Then $K_{X^1}$ is semi-ample too.   
The Stein factorisation $j^1:X^1\to J^1$ 
is by construction the model of the Iitaka fibration
of $X^1$ induced by a sufficiently divisible
 pluricanonical system, in particular $J^1$ is normal. 
 As in the previous subsection we obtain $\theta_{can}(X^1, \emptyset, j^1)=p^*\theta_{can}(X, \emptyset, j)$ 
 where $p:J^1\to J$ is the natural finite mapping. Hence, 
 we can and will from now on assume that
$K_{X}$ is Cartier. 

\smallskip
 
 \noindent {\bf{Step 2.}} 
 {We now reduce to the case when $j_*\omega_{X}$ is a rank one coherent analytic sheaf having a non zero section. }
 This step is analogous to Step 4 in the proof of Proposition \ref{smoothreg}. We construct $J^0\subset J$ a Zariski open subset 
$\hat{J}^0 \to J^0$  finite etale, $\hat{X}^{0} \to X^0=j^{-1}(X^0)$ finite etale, such that $\omega_{\hat{X}^0}$ is trivial. We then construct a normal K\"ahler
compactification of $\hat{X}^{0}$ and use Hironaka's theorem to construct a compact connected K\"ahler manifold $\hat{X}'$ and a diagram of holomorphic connected mappings
$$
\xymatrix@1{
\hat{X}'\ar[r]^{\pi}\ar[d]_{\hat{j}'} &X\ar[d]_{j} \\
\hat {J}'\ar[r]^{\psi}& J\\
}
$$
with $\hat{J}'$ normal and $\psi$ finite such that  there is a log-resolution $\nu:X'\to X$ through which $\pi$ factorizes as $\pi=\nu \circ \rho$ with $\rho$ holomorphic
and $\hat{j}_*'\omega_{\hat{X}}$ is a non zero rank one coherent analytic sheaf.  The canonical divisors verify:
\begin{eqnarray*}
 K_{X'} & \sim_{\Q} & \nu^* K_{X} + \sum_{E \in Exc(\nu)} a_E E \\
 K_{\hat{X'}}& \sim_{\Q} &\rho^* K_{X'}+K_{\hat{X'}/X'} 
  \sim_{\Q}  \pi^* K_{X}+ K_{\hat{X'}/X'}+\nu^*  \sum_E a_E E
\end{eqnarray*}
with $K_{\hat{X'}/X'}\ge 0$ since both are smooth and all $a_E\ge 0$ since $X$ has canonical singularities. Therefore, setting:
$$
\hat{E}'=  K_{\hat{X'}/X'}+\nu^*  \sum_E a_E.E, 
$$
it follows that $(\hat{X}', \hat{E}', \hat{j'})$ is a Iitaka model, 
$\theta_{can}(\hat{X}', \hat{E}', \hat{j'})=\psi^* \theta_{can}(X,\emptyset,j)$ and we are reduced to proving smoothness
of $\theta_{can}(X,E,j)$ when $j_*\omega_{X}$ is a rank one coherent analytic sheaf having a non zero section. 

\smallskip

\noindent {\bf{Step 3.}} The situation can now be dealt with as in the proof of Proposition  \ref{smoothreg}. 
\end{proof}

 \begin{rem}
   The projective assumption on $X$ is only used in Step 1 and can probably be dropped. 
 \end{rem}

\subsection{A klt regularity result}  \label{sec:general}.
  
 In order to obtain a  generic regularity result for the twisted KE metric associated to a minimal abundant klt pair $(X,\Delta)$, we have to generalize
 our set-up to sub-klt pairs as was suggested to us by S. Druel.  The state of the art in the litterature regarding the covering lemmas
 we need seems to be \cite{A} and our write-up will follow this reference
  closely.

 Recall that a  pair $(X,B)$ consists of a normal complex projective variety $X$ endowed with a $\Q$-Weil divisor $B$ 
 such that $K_X+B$ is $\Q$-Cartier. 
 
 \begin{defi}\label{def:subklt}
 A pair $(X,B)$ is sub-klt (klt in loc. cit.)  whenever the Shokurov's discrepancy b-divisor ${\bf{A}}(X,B)$  satisfies
 $\lceil {\bf{A}}(X,B) \rceil\ge 0 $.  
\end{defi}

 For the convenience of the reader, we shall reproduce
the precise definition of ${\bf{A}}(X,B)$   \cite[p. 234]{A}.  Recall that a  b-divisor on a normal projective variety $X$ 
is the data $\mathbf{D}=\{ D_{X'} \}_{X'}$ of a $\Q$-Weil divisor on every (normal projective) birational model $X'$ of $X$ such that whenever the natural rational map
$\mu:  X''\to X'$ is a proper birational morphism, one has $\mu_* D_{X''}=D_X'$.  Every rational function $\phi$ (resp. top degree form $\omega$) on $X$ defines a 
linearily trivial b-divisor $\overline{(\phi)}:=\{(\phi_{X'})\}_{X'}$ (resp. a canonical b-divisor $\mathbf{K}=
\{ (\omega_{X'}) \}_{X'}$) . Every $\Q$-Cartier divisor $D$ on a birational model $X'$ gives rise to a $\Q$-Cartier b-divisor such that if $\mu:X''\to X'$ is a proper birational morphism
$\bar D_{X''}=\mu^* {D}_{X'}$. The round-up $\lceil \mathbf{D} \rceil$ of a b-divsor $\mathbf{D}$ is defined componentwise. Shokurov's discrepancy b-divisor for a pair $(X,B)$ 
is defined by the formula: 
$${\bf{A}}(X,B)=\mathbf{K}-\overline{K_X+B}. 
$$

 Definition \ref{def:subklt} means that 
 the usual inequality in the definition of klt singularities applies, in particular the multiplicities of $B$ are rational numbers strictly smaller than $1$.
 
 \begin{exa}
 If $(X,\Delta)$ is klt and $E$ is an
 effective $\Q$-divisor, then the pair $(X, \Delta-E)$ is sub-klt. 
 \end{exa}
 
 The reason sub-klt pairs are useful is the following:
 
 \begin{lem}\cite{SP} \label{subcrep}
 Let $(X,B)$ be sub-klt and $f:Y\to X$ be a generically finite morphism from a smooth projective variety $Y$. Then there exists a 
 unique $\Q$-divisor on $Y$,  $B_Y$,  such that:
 \begin{enumerate}
  \item $f^*(K_X+B)\sim_{\Q} K_Y+B_Y$
  \item $(Y,B_Y)$ is sub-klt.
 \end{enumerate}
 \end{lem}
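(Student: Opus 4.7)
The plan is as follows. For existence, fix any non-zero rational top form $\eta$ on $X$ and set $K_X := (\eta)$ as a Weil divisor; since $f$ is generically finite, $f^*\eta$ is a non-zero rational top form on $Y$, and $K_Y := (f^*\eta)$ is a Cartier representative of the canonical class (as $Y$ is smooth). Because $K_X+B$ is $\Q$-Cartier by hypothesis, the pullback $f^*(K_X+B)$ is a well-defined $\Q$-Cartier divisor on $Y$, and I set
$$B_Y := f^*(K_X+B) - K_Y,$$
an honest $\Q$-Weil divisor. Condition (1) then holds as an equality, not merely up to $\sim_\Q$. For uniqueness as a $\Q$-divisor: replacing $\eta$ by $h\eta$ with $h\in k(X)^*$ alters $K_X$ and $K_Y$ by the principal divisors $(h)$ and $f^*((h)) = (f^*h)$ respectively, so $B_Y$ is unchanged; hence $B_Y$ depends only on $(X,B)$ and $f$.

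To verify that $(Y,B_Y)$ is sub-klt, I show that for every proper birational morphism $\mu: Y'\to Y$ with $Y'$ smooth projective and every prime divisor $E\subset Y'$, the coefficient of $E$ in
$$\mathbf{A}(Y,B_Y)_{Y'} = K_{Y'} - \mu^*(K_Y+B_Y) = K_{Y'} - (f\mu)^*(K_X+B)$$
is strictly greater than $-1$. Choose a log-resolution $\pi: \tilde X\to X$ of $(X,B)$ together with a smooth projective $W$ and morphisms $a: W\to Y'$ birational and $b: W\to \tilde X$ generically finite satisfying $\pi b = f\mu a$ (e.g.\ desingularize the main component of $Y'\times_X \tilde X$). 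By further blowing up $\tilde X$ if necessary, arrange that $a$ is an isomorphism at the generic point of the strict transform $\tilde E\subset W$ of $E$, and that $b(\tilde E)$ is a prime divisor $F$ on $\tilde X$.

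The computation is then a direct Riemann-Hurwitz identity between smooth varieties: decompose $K_W = b^*K_{\tilde X} + R_b$ with $R_b\geq 0$ the ramification divisor of $b$, and $\pi^*(K_X+B) = K_{\tilde X} - \mathbf{A}(X,B)_{\tilde X}$, to obtain
$$K_W - b^*\pi^*(K_X+B) = R_b + b^*\mathbf{A}(X,B)_{\tilde X}.$$
Let $e$ be the ramification index of $b$ at $\tilde E$; then $\mathrm{coeff}_{\tilde E}(R_b) = e-1$ and $\mathrm{coeff}_{\tilde E}(b^*\mathbf{A}(X,B)_{\tilde X}) = e\cdot \mathrm{coeff}_F(\mathbf{A}(X,B)_{\tilde X}) > -e$ by sub-klt-ness of $(X,B)$. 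Their sum is $> -1$, and this equals $\mathrm{coeff}_E(\mathbf{A}(Y,B_Y)_{Y'})$ because $a$ is unramified at $\tilde E$. Hence $\lceil\mathbf{A}(Y,B_Y)\rceil\geq 0$.

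The main obstacle is the bookkeeping in arranging that $b(\tilde E)$ is divisorial on $\tilde X$: one must know that a log-resolution of $(X,B)$ dominating the restriction to $k(X)$ of the valuation $\mathrm{ord}_{\tilde E}$ can always be found, and that the computation is independent of the choice of such sufficiently large $\tilde X$. Both facts follow from the construction of $\mathbf{A}(X,B)$ as a b-divisor together with the standard fact that every divisorial valuation on $k(X)$ is realised as the vanishing order along a prime divisor on some birational model of $X$. Once this is in place the proof is a routine Riemann-Hurwitz calculation feeding on the defining inequality of sub-klt.
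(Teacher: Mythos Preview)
The paper does not give its own proof of this lemma: it simply cites \cite{SP} (Koll\'ar, \emph{Singularities of pairs}). Your argument is the standard one underlying that reference: define $B_Y$ canonically via $B_Y = f^*(K_X+B) - K_Y$ for compatible choices of canonical divisors, and check sub-klt by a Riemann--Hurwitz computation on a common roof $W$ dominating both a log-resolution of $(X,B)$ and a birational model $Y'$ of $Y$. The key inequality $(e-1) + e\cdot a_F > -1$ when $a_F > -1$ is exactly the content of the proof in \cite{SP}.

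One small remark on uniqueness: as literally stated with $\sim_{\Q}$ in condition (1), the lemma's uniqueness clause is not quite right (one could perturb $B_Y$ by a small multiple of a principal divisor and remain sub-klt). What is meant, and what you correctly prove, is that the \emph{canonical} $B_Y$ obtained from compatible choices of $K_X$ and $K_Y$ (via the same rational top form $\eta$) is independent of that choice; this is the standard convention in the subject and is how the paper uses $B_Y$ afterwards. Your handling of the point that the restricted valuation $\mathrm{ord}_{\tilde E}|_{k(X)}$ is divisorial (so that after further blow-up of $\tilde X$ the image $b(\tilde E)$ becomes a prime divisor) is correct and is the only genuinely non-formal step.
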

 
  However $ B\ge 0$ does not imply $B_Z\ge 0$ when $f$ is not birational, even in dimension $1$, cf. also Lemma \ref{exist:bc}.
  
 To every $b$-divisor $\mathbf{D}$ we can associate a subsheaf $O_{X}(\mathbf{D})\subset \C(X)$ whose sections over a Zariski open subset $U\subset X$
 are the rational functions $\phi\in \C(X)$ such that $mult_{E}(\overline(\phi)+\mathbf{D})\ge 0$ whenever $E$ is an irreducible Weil divisor on some birational model $X'$
 such that its generic point maps to $U$.

 If $j:X\to Y$ is a proper regular fibration  \cite{A} defines  $j:(X,B)\to Y$ to be lc-trivial whenever
 \begin{enumerate}
  \item $(X,B)$ has sub-klt singularities over the generic point of $Y$
  \item $j_* O_X(\lceil {\bf{A}}(X,B) \rceil)$ has rank one
  \item there exists a positive integer, a rational function $\phi \in \C(X)$ and a $\Q$-Cartier divisor $D$ on $Y$ such that:
  $$K_X+B +\frac{1}{r} (\phi)=f^*D
  $$
  where $K_X$ is a canonical divisor. 
 \end{enumerate}

 Condition (2) should be seen as  controlling the negative part of $B$ :

\begin{exa}
 If $X$ smooth, $(X,B)$ is sub-klt and the support of $B$ is simple normal crossing $ O_X(\lceil {\bf{A}}(X,B) \rceil)= O_X(\lceil -B\rceil )$.
\end{exa}

Definition \ref{def:itmod} immediately applies to lc-trivial fibration and this coincides with the notion of base change of a lc-trivial fibration
used by \cite{A}. It follows from Lemma \ref{subcrep} that a lc-trivial fibration always admit a lc-trivial base change along any regular proper dominant morphism
from a smooth variety to its base. 

\begin{defi}
 Assume $j:(X,B) \to Y$ is an abundant lc-trivial fibration, meaning that  furthermore  $(X,B)$ is sub-klt and $D=A$ is big and semiample.
 
 If $f:X'\to X$ is a log-resolution of $(X,B)$ then
 setting $B_{X'}=\Delta-E$ (notations of lemma \ref{subcrep}) where $\Delta$ and $E$ are effective then $(X',\Delta, E, j\circ f)$ is an Iitaka model. 
 One defines
 $$
 \theta_{can}(X,B,j)=\theta_{can}(X',\Delta, E, j\circ f)
 $$
  which is independent of $X'$. 
\end{defi}

One does not need Condition (2) for this definition. It is however needed in the proof of
the following result :

\begin{theo}
 If $j:(X,B) \to Y$ is an abundant lc-trivial fibration, $\theta_{can}(X,B,j)$ is smooth on a non-empty Zariski open subset of $Y$. 
\end{theo}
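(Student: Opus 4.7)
The plan is to adapt the three-step strategy of Proposition \ref{smoothreg} to the lc-trivial fibration setting, substituting Ambro's canonical bundle formula \cite{A} for the Kawamata-type ingredients used there. The target Monge-Amp\`ere equation, the global architecture of the proof, and the final analytic input (P\u{a}un's Laplacian estimate) all remain unchanged; what has to be re-established is the Hodge-theoretic description of the pushforward measure in the sub-klt setting.

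First, I reduce to a favorable base change. Starting from a log-resolution $f:X'\to X$ of $(X,B)$ and writing $B_{X'}=\Delta-E$ via Lemma \ref{subcrep}, I replace $(X,B,j)$ by the Iitaka model $(X',\Delta,E,j\circ f)$, so that the total space is smooth and the boundary has simple normal crossing support. Then I perform a generically finite base change $\psi:\hat Y\to Y$ together with a corresponding lc-trivial base change $\hat j:(\hat X,\hat B)\to \hat Y$ chosen so that $\hat Y$ is smooth projective, the discriminant locus $\hat D\subset\hat Y$ is simple normal crossing, and the local monodromies of $\hat j$ around the components of $\hat D$ are unipotent. The existence of such a base change for lc-trivial fibrations is Ambro's semistable reduction theorem \cite{A}, obtained by iterating the covering construction of Lemma \ref{lem:ct} combined with Lemma \ref{subcrep}. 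By Lemma \ref{lemma:bcinv}, smoothness of $\theta_{can}(\hat X,\hat B,\hat j)$ on a Zariski open subset of $\hat Y$ will descend to the desired statement on $Y$.

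Secondly, I set up the Monge-Amp\`ere equation on $\hat Y$, which reads $(\chi_{\hat Y}+dd^c\phi)^{\dim \hat Y}=e^\phi\mu$ with $\mu=\hat j_* v(\hat h)$. The crucial input is Ambro's description of the moduli part in the canonical bundle formula \cite[Theorem 0.2]{A}: after the unipotent base change, the local density of $\mu$ with respect to a smooth top form on $\hat Y$ takes the form $\|\psi\|^2 e^{-\phi^-}$, where $\psi$ is a local holomorphic section and $\phi^-$ is a local plurisubharmonic function with zero Lelong numbers, smooth on $\hat Y^0:=\hat Y\setminus \hat D$. As in Step 3 of Proposition \ref{smoothreg}, a preliminary \'etale cyclic covering is used to reduce to the case where the relevant $\Q$-Cartier divisor on $\hat Y$ is actually Cartier ($N=1$), so that the direct image $\hat j_*\omega_{\hat X/\hat Y}$ is an honest invertible sheaf whose fiber-integral representation produces the above Hodge norm.

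With this preparation, I apply P\u{a}un's Laplacian estimate \cite{Paun08} exactly as in Step 2 of Proposition \ref{smoothreg} to conclude that $\phi$ is smooth on the non-empty Zariski open subset of $\hat Y^0$ where $\psi$ does not vanish; the exponential term $e^\phi$ in the Monge-Amp\`ere equation does not affect the argument. Descending via Lemma \ref{lemma:bcinv} completes the proof. The main obstacle is the Hodge-theoretic step: isolating the correct description of $\mu$ for a sub-klt lc-trivial fibration is precisely what forces the framework to be enlarged beyond klt pairs, and it relies crucially on Ambro's canonical bundle formula. Once that input is available as a black box, the analytic machinery and the combinatorial reductions already developed in the klt and canonical cases transplant with only cosmetic modifications.
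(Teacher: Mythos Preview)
Your global architecture matches the paper's (base change, then a Hodge-theoretic description of $\mu$ via Ambro, then P\u{a}un), but the step you yourself flag as ``the main obstacle'' has a real gap. Your claim that the local density of $\mu$ with respect to a smooth top form on $\hat Y$ is $\|\psi\|^2 e^{-\phi^-}$ with $\psi$ holomorphic and $\phi^-$ psh of zero Lelong numbers is incomplete. The canonical bundle formula splits $K_X+B\sim_{\Q} j^*(K_Y+B_Y+M_Y)$ into a discriminant part $B_Y$ and a moduli part $M_Y$; only the Hodge norm attached to $M_Y$ has the shape you describe. The full density is rather $e^{\phi_B^{+}+\phi^{+}-(\phi_B^{-}+\phi^{-})}$, where $\phi_B^{\pm}$ come from $B_Y$ and are genuinely divisorial. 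P\u{a}un's estimate needs $e^{-(\phi_B^{-}+\phi^{-})}\in L^{p}$ for some $p>1$: zero Lelong numbers handle $\phi^{-}$, but for $\phi_B^{-}$ one must use that $(Y,B_Y)$ is sub-klt, i.e.\ all coefficients $b_P<1$. You never invoke this, and Ambro's Theorem~0.2 (nefness of the moduli b-divisor) does not provide it; the paper uses instead Ambro's result that after a birational base change $(Y,B_Y)$ is sub-klt.

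Your ``$N=1$ reduction'' also misidentifies the relevant cover. Making a $\Q$-Cartier divisor on $\hat Y$ Cartier does not turn $\hat j_{*}\omega_{\hat X/\hat Y}$ into the carrier of the fiber integral: the boundary $\hat B$ is still there, and the fiber integral involves $(K_F+B_F)$, which is only $\Q$-trivial of some order $b$ on the generic fiber. The paper performs Ambro's $b$-fold cyclic cover of the \emph{total space} (normalize $X$ in $\C(X)[\sqrt[b]{\phi}]$, take a smooth $G=\Z/b\Z$-equivariant model $V\to Y$); the moduli line bundle $O_Y(M_Y)$ is then identified with the eigensubsheaf $(h_{*}\omega_{V/Y})^{\chi}$, and the fiber integral is, up to a constant, the Hodge norm of a section there. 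This is what converts the twisted fiber integral into an honest Hodge norm to which Kawamata's zero-Lelong-number argument applies; a covering of the base alone does not achieve it.
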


\begin{proof} 
{\bf{Step 1.}} We will need Kawamata's canonical bundle formula hence we recall its main features.  
 For $p$ an irreducible Weil $\Q$-Cartier divisor on $Y$ define $b_P\in \Q$ by Kawamata's formula:
$$ 1-b_P=\min \{ t\in \R  \ | \ (X,B+tj^*P) \mathrm{\ is \ lc \ over \ the \ generic \ point \ of \ P} \}
$$
and define two $\Q$-Weil divisors on $Y$,  $B_Y= \sum b_P P$  and $M_Y$ by the canonical bundle formula:
 $$ K_X+B+\frac{1}{r} (\phi)= j^* (K_Y+B_Y+M_Y),
 $$
 where $K_Y$ is a canonical divisor. 
We then have the following result \cite{A}:

\begin{theo}  After a birational base change on $Y$, $(Y,B_Y)$ is subklt
and $(Y,M_Y)$ is nef. Both are invariant by further birational base change. 
\end{theo}

We can thus assume that $(Y,B_Y)$ is sub-klt and $(Y,M_Y)$ is nef. 

\smallskip

\noindent {\bf{Step 2.}}  
For $N$ divisible enough denote by 
$$
\eta_N: O_{X}(N j^*A) \to O_{X}(N. (K_X+B))
$$ 
the natural isomorphism. Denote by $\beta_N$ 
a local generator of $N.(K_Y+B_Y)$ and by $\alpha$ a local generator of $N.A$. Then $\frac{\eta_N(\alpha)}{\beta}$ can be identified with a
section of $N.M_Y$. When restricted to $F$,  $(\frac{\eta_N(\alpha)}{\beta})^{1/N} $ is a multisection of $K_F+B_F$.  
Furthermore the measure  $\mu$ such that $\theta_{can}=\chi_J+dd^c\phi$ and $(\chi_J+dd^c\phi)^{\dim Y}=e^{\phi} \mu$ is 
$$
\mu= e^{g} (\int_{X/Y} | (\frac{\eta_N(\alpha)}{\beta})^{1/N} |^2) ( \beta_N \overline{\beta_N})^{\frac{1}{N}},
$$
where $g$ is smooth.   

P\u{a}un's Laplacian estimate \cite{Paun08} reduces us to prove that 
$$
\int_{X/Y} | (\frac{\eta_N(\alpha)}{\beta})^{1/N} |^2=e^{\phi^+}e^{-\phi^-},
$$ 
possibly after a base change .
Here $\phi^-$ is a local plurisubharmonic function with zero Lelong numbers  smooth on the generic point of $J$
and $\phi^+$ is psh. Indeed since $B$ is subklt the density of $\mu$ with respect to Lebesgue measure would take the form 
$d=e^{\phi_B^++\phi^+-(\phi^-_B+\phi^-)}$ where
all four functions are quasi psh and 
$e^{-(\phi_B^{-}+\phi^-)} \in L^p$ with $p>1$ since $(Y,B_Y)$ is sub-klt so that $b_P<1$ in the formula $dd^c\phi^-_B=\sum_{b_P>0} b_P[P] + smooth$ . 

\smallskip

\noindent {\bf{Step 3.}} 
Let us recall Ambro's covering construction, namely \cite[Lemma 5.2]{A}.
After a birational base change ensuring that all the pairs under consideration are log-smooth and  the discriminants of $j$ and $h$ below are simple  normal crossing divisors,
see \cite[Properties (i)-(v), p. 245]{A}, we have the following geometric situation: 

Let $F$ be the generic fiber of $j$. 
Let $b$ be the smallest positive integer  $m\in \N^*$ such that $m(K_F+B_F)\sim O$. Let $\tilde{X}$ be the normalization of $X$ in 
$\C(X)[\sqrt[b]{\phi}]$. $\tilde X$ carries a Galois action of $G=\Z\slash b\Z$ so that $\tilde X/G=J$. Let $V$ be a smooth model of $\tilde X$ such that 
$h: V \to J$ is regular and the action of $G$ lifts to $V$. By Lemma \ref{subcrep} there is a sub-klt pair $(V,B_V)$ such that $k:(V,B_V) \to J$ satisfies all the properties of a 
lc-trivial fibration except Condition (2). 

Denoting by $\chi$ the character of $G$ sending $k \mod b$ to $e^{2\pi i k/b}$ and by
$(h_* \omega_{V/Y})^{\chi}$ the corresponding eigensubsheaf of $h_* \omega_{V/Y}$ we have, 
if $h$ is semistable in codimension one which can be assumed to be true, an isomorphism $O_J(M_J) \to (h_* \omega_{V/Y})^{\chi}$ - $M_J$ being effective. 
The Hodge metric on $h_* \omega_{V/Y}$ gives rise to a Hodge metric $\| . \|^2_{Hodge}$ on its summand $(h_* \omega_{V/Y})^{\chi}$. This is a singular hermitian 
metric on this line bundle
and by the method of \cite{Kaw2} it has a positive curvature current with zero Lelong numbers, after perhaps a further base change. Now it follows from
Ambro's construction that: 
$$
\int_{X/Y} | (\frac{\eta_N(\alpha)}{\beta})^{1/N} |^2=C.\|(\frac{\eta_N(\alpha)}{\beta})^{1/N}\|^2_{Hodge}
$$
where $C>0$ is a constant. This gives the required expression for the LHS of the Monge Amp\`ere equation. 
\end{proof}

 \begin{rem}
  A slightly less general result can be obtained using \cite[Theorem 1.3 (3)]{Kaw2}. 
 \end{rem}

\section{The parabolic viscosity approach} \label{sec:visc}

We study the complex degenerate parabolic complex Monge-Amp\` ere flows
\begin{equation} \label{krfglobal}
 e^{\dot{\f}_t + F (t,x,\f) } \mu (x) - (\omega_t +dd^c \f_t)^n = 0, 
\end{equation}
in $M_T=(0,T) \times M$, where $T \in ]0,+\infty]$ and
  \begin{itemize}
 \item $\overline {M}$ is a compact K\"ahler $n$-dimensional manifold with interior $M$ and boundary $\partial M := \overline{M} \setminus M$;
  \item $\omega_t (x)$ is a continuous family of semi-positive $(1,1)$-forms on $\overline{M}$, 
%depending on $ t \in [0,T[$, 
  \item $F (t,x,r)$ is continuous in $[0,T[ \times \overline{M} \times \R$ and non decreasing in $r$,
$(t,x,r) \mapsto F(t,x,r)$ is uniformly Lipschitz  in $r$ and 
$(t, x) \mapsto  F (t, x, 0)$ is uniformly bounded from above,
  \item $\mu (x)\geq 0$ is a bounded continuous volume form on $\overline{M}$,
%depending on $ t \in [0,T[$, 
   \item $\f : [0,T[ \times \overline{M} \rightarrow \R$ is the unknown function, with $\f_t: = \f (t,\cdot)$. 
   \end{itemize}

Given $\phi$ a continuous data on the the parabolic boundary $\partial_P M$ of $M$,
our aim here is to establish a global comparison principle between subsolutions and supersolutions to
the the Cauchy problem for the equation (\ref{krfglobal}).

For our applications, $M$ will be an open subset of a compact K\"ahler manifold $X$.
We shall always assume that
\begin{itemize}
\item there exists a big form $\theta$ such that $\omega_t \geq \theta$ for all $t$;
\item $M$ is  contained in the ample locus of $\theta$.
\end{itemize}
 We will also need to assume that $t \mapsto \omega_t$ is regular (see Definition \ref{def:regular}).

\subsection{Recap on viscosity concepts}

We recall some basic facts from the viscosity theory for complex Monge-Amp\`ere flows developed in \cite{EGZ16}.

\begin{defi}
A function $\f \in {\mathcal C}^{1,2}$ is a {\it classical subsolution} of (\ref{krfglobal}) if
for all $t \geq 0$ $x \mapsto \f_t(x)$ is $\omega_t$-psh and for all $(t,x) \in M_T$,
$$
(\omega_t+dd^c \f_t)^n \geq e^{\dot{\f}_t+F(t,x,\f_t(x))} \mu(x).
$$

A function $\f \in {\mathcal C}^{1,2}$ is a {\it classical supersolution} of (\ref{krfglobal}) if for all $(t,x) \in M_T$,
$$
(\omega_t+dd^c \f_t)_+^n \leq  e^{\dot{\f}_t+F(t,x,\f_t(x))} \mu(x)
$$

A {\it classical solution}   is
both a subsolution and a supersolution.
\end{defi}

\noindent Here $\theta_+(x)=\theta(x)$ if $\theta(x) \geq 0$ and  $\theta_+(x)=0$ otherwise.
{The problem is that classical solutions usually do not exist !}

 \begin{defi}
Given $u:M_T \rightarrow \R$ an u.s.c. bounded function
and $(t_0,x_0) \in M_T$, $q$ is a differential test form above for $u$ at $(t_0,x_0)$ if
\begin{itemize}
\item $q \in {\mathcal C}^{1,2}$ in a small neighborhood $V_0$ of $(t_0,x_0)$;
\item $u \leq q$ in $V_0$ and $u(t_0,x_0)=q(t_0,x_0)$.
\end{itemize}
\end{defi}

One defines similarly a differential test from below.

\begin{defi}
A bounded u.s.c.  function $u: M_T \rightarrow \R$  is a {\it viscosity subsolution} of (\ref{krfglobal})
if for all $(t_0,x_0) \in M_T$ and all differential test $q$ from above,
$$
(\omega_{t_0}(x_0)+dd^c q_{t_0}(x_0))^n \geq e^{\dot{q}_{t_0}(x_0)+F(t_0,x_0,q_{t_0}(x_0))} \mu(x_0).
$$
\end{defi}

\begin{defi}
A bounded  l.s.c. function $v: M_T \rightarrow \R$  is a {\it viscosity supersolution} of (\ref{krfglobal})
if for all $(t_0,x_0) \in M_T$ and all differential test $q$ from below,
$$
(\omega_{t_0}(x_0)+dd^c q_{t_0}(x_0))_+^n \leq e^{\dot{q}_{t_0}(x_0)+F(t_0,x_0,q_{t_0}(x_0))} \mu(x_0).
$$

A {\it viscosity solution} of (\ref{krfglobal}) is a continuous function which is both a viscosity subsolution and a viscosity supersolution.
\end{defi}

We thus went around the lack of regularity of the functions $u,v$ by using their differential tests (from below or/and above).
This definition is a special case of the general theory of \cite{CIL92} for viscosity solutions of general
degenerate elliptic/parabolic equations. The reader is referred to this survey article for the first principles 
of the theory.  

We refer the reader to \cite{EGZ15} for a study of viscosity sub/super-solutions to local complex Monge-Amp\`ere flows. 
We just recall here  some basic facts for the reader's convenience:
\begin{itemize}
\item Assume $u$ (resp. $v$) is ${\mathcal C}^{1,2}$-smooth. It is a viscosity subsolution (resp. supersolution) iff it is a classical subsolution (resp. supersolution).
\item If $u_1,u_2$ are viscosity subsolutions, then so is $\max(u_1,u_2)$.
\item If $(u_\alpha)_{\alpha \in A}$ is a  family of subsolutions which is locally uniformly bounded above, then
$$
\f:=\left( \sup \{ u_\alpha, \; \alpha \in A \} \right)^*
\text{ is a subsolution}.
$$
\item If $u$ is a subsolution of $(\ref{krfglobal})_{\mu}$
then it is also a subsolution of $(\ref{krfglobal})_{\nu}$ for all $0 \leq \nu \leq \mu$.
\item $u$ is a subsolution of $(\ref{krfglobal})_{0}$ if and only if  for all $t \geq 0$,  $x \mapsto \f_t(x)$ is $\omega_t$-plurisubharmonic.
\end{itemize}

\begin{itemize}
\item In the definition of differential tests, one can use functions that are merely Lipschitz in $t$
and $k$-convex (resp. $k$-concave) in $x$.  One then have to test the inequalities for all parabolic jets.
\item Set $A=2 Osc_X(u)$. If $u$ is a subsolution of $(\ref{krfglobal})_{fdV}$, then
$$
u^{\e}(t,x)=\sup\{ u(s,x)-\e^{-1}|t-s| ; \; |t-s|<A \e \}
$$
is Lipschitz in $t$,  decreases to $u$  as $\e$ decreases to zero, 
and is a subsolution of $(\ref{krfglobal})_{f_\e dV}$, where
$$
f_\e(t,x)=\inf \{ f(t,x); \; |t-s|<A\e \},
$$
replacing also $F$ by $F_\e$, the inf-convolution of $F$ in time.
\item One can also use sup-convolutions in space {\it locally}, considering 
$$
u^{\e}(t,x)=\sup\{ u(s,x)-\e^{-2} ||x-y||^2 ; \; |x-y|<A \e \}
$$
and obtain similar information.
\end{itemize}

A delicate point in the global setting 
%that we are interested in  
is that there is no Lebesgue (i.e. translation invariant) measure.
Hence sup-convolutions 
%in space 
are not so useful. We will first need  to localize before regularizing the objects of our study
%(see the sketch of the proof of the comparison principle in the next section).

\smallskip

Recall that if $\omega$ is a closed smooth $(1,1)$-form in $M$, then the complex Monge-Amp\`ere  measure $(\omega + dd^c \p)^n$ 
is well-defined {\it in the pluripotential sense} for all bounded $\omega$-psh functions $\p$ in $X$
%as follows from the work of  Bedford and Taylor 
by \cite{BT82}.
The  viscosity (sub)solutions of complex Monge-Amp\`ere equations can be interpreted in the pluripotential sense, as shown in \cite[Theorem 1.9]{EGZ11}. 

On the other hand, it is not yet clear how to interpret viscosity solutions of Complex Monge-Amp\`ere flows in terms of pluripotential theory. 
We note however the following useful lemma
which follows  from \cite[Theorem 1.9 and  Lemma 4.7]{EGZ11}:

\begin{lem}\label{lem:pluripotvisc}
 Assume $u\in C^0({M}_T, \R)$ is bounded and such that:
 \begin{itemize}
   \item the restriction $u_t$ of $u$ to $M_t :=  \{t \} \times M$ is $\omega_t$-psh,
  \item $u$ admits a continuous partial derivative $\partial_t u$ with respect to $t$,
  \item for every $ t\in ]0,T[$, the restriction $u_t$ of $u$ to $M_t$  satisfies  
  $$
  (\omega_t+dd^c u_t)^n \ge e^{\partial_t u+F(t,x,u)}\mu(t,x)
  $$ 
  in the pluripotential sense on $M_t$.
 \end{itemize}
 Then $u$ is a subsolution of (\ref{krfglobal}). 

 Similarly assume $v\in C^0({M}_T, \R)$ is bounded and satisfies  :
 \begin{itemize}
 \item the restriction $v_t$ of $v$ to $M_t :=  \{t \} \times M$ is $\omega_t$-psh,
  \item $v$ admits a continuous partial derivative $\partial_t v$ with respect to $t$,
  \item there exists a continuous function $w$ such that, for every $ t\in ]0,T[$, the restriction $v_t$ to $M_t$  satisfies 
  $$
  (\omega_t+dd^c v_t)^n \leq e^w \mu(t,x)
  $$ 
  in the pluripotential sense on $M_t$
  and ${\partial_t v_t+F(t,x,v_t)} \ge w$.
 \end{itemize}
Then $v$ is a supersolution of (\ref{krfglobal}). 
\end{lem}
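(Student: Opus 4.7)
The plan is to reduce the parabolic viscosity inequality at a touching point $(t_0,x_0)$ to a purely elliptic statement at the frozen time $t_0$, and then invoke the equivalence between pluripotential and viscosity (sub/super)solutions of the elliptic complex Monge--Amp\`ere equation proved in \cite[Theorem 1.9]{EGZ11}. The key observation is that at a touching point, the time derivative of a $C^{1,2}$ test function must agree with $\partial_t u$ (resp. $\partial_t v$), which exists and is continuous by hypothesis.

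For the subsolution part, I would proceed as follows. Let $q$ be a differential test from above for $u$ at $(t_0,x_0) \in M_T$, so $u \leq q$ on some neighbourhood $V_0$ with equality at $(t_0,x_0)$. The function $t \mapsto q(t,x_0)-u(t,x_0)$ is nonnegative, vanishes at $t_0$, and is differentiable at $t_0$ because $u$ admits a continuous $\partial_t u$ and $q \in \mathcal{C}^{1,2}$; hence
\[
\partial_t u(t_0,x_0) \;=\; \partial_t q(t_0,x_0).
\]
Freeze $t=t_0$: the slice $u_{t_0}$ is $\omega_{t_0}$-psh and satisfies, by assumption,
\[
(\omega_{t_0}+dd^c u_{t_0})^n \;\geq\; e^{G(x)}\,\mu(x),
\qquad G(x):=\partial_t u(t_0,x)+F(t_0,x,u(t_0,x)),
\]
in the pluripotential sense, with $G$ continuous. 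By \cite[Theorem 1.9]{EGZ11}, $u_{t_0}$ is then a viscosity subsolution of this elliptic Monge--Amp\`ere inequality on $M$. Since $q_{t_0}$ is a spatial test from above for $u_{t_0}$ at $x_0$, we obtain
\[
(\omega_{t_0}(x_0)+dd^c q_{t_0}(x_0))^n \;\geq\; e^{G(x_0)}\mu(x_0) \;=\; e^{\partial_t q(t_0,x_0)+F(t_0,x_0,q(t_0,x_0))}\mu(x_0),
\]
using the identification of derivatives at the touching point together with $u(t_0,x_0)=q(t_0,x_0)$. This is exactly the viscosity subsolution property.

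For the supersolution part, the argument is symmetric but uses the auxiliary function $w$. Let $q$ be a test from below for $v$ at $(t_0,x_0)$. Again $\partial_t v(t_0,x_0)=\partial_t q(t_0,x_0)$ and $v(t_0,x_0)=q(t_0,x_0)$. The slice $v_{t_0}$ is $\omega_{t_0}$-psh and satisfies $(\omega_{t_0}+dd^c v_{t_0})^n \leq e^{w(t_0,\cdot)}\mu$ in the pluripotential sense; by \cite[Theorem 1.9 and Lemma 4.7]{EGZ11}, $v_{t_0}$ is a viscosity supersolution of the corresponding elliptic equation (where the positive part $(\cdot)_+$ must now appear because nothing prevents $\omega_{t_0}+dd^c q_{t_0}$ from failing to be semi-positive at $x_0$). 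Applying this to the spatial test $q_{t_0}$ at $x_0$ gives
\[
(\omega_{t_0}(x_0)+dd^c q_{t_0}(x_0))_+^n \;\leq\; e^{w(t_0,x_0)}\mu(x_0).
\]
By the hypothesis $\partial_t v+F(t,x,v)\geq w$ evaluated at $(t_0,x_0)$, together with the identifications above, the right-hand side is bounded by $e^{\partial_t q(t_0,x_0)+F(t_0,x_0,q(t_0,x_0))}\mu(x_0)$, yielding the required inequality.

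The main (minor) obstacle is purely bookkeeping: making sure the differentiation at the touching point is legitimate. This requires $t_0 \in (0,T)$, which is given since $(t_0,x_0) \in M_T=(0,T)\times M$, and the existence of a genuine one-sided neighbourhood of $t_0$ on which $u\leq q$ (resp. $v\geq q$); the continuity of $\partial_t u$ (resp. $\partial_t v$) assumed in the statement is precisely what allows the equality of time derivatives at the minimum/maximum, sidestepping any sub/super-jet technicalities. The substantive input is entirely contained in \cite[Theorem 1.9]{EGZ11} used slice-wise; the parabolic lemma is essentially a tautological consequence once time is frozen.
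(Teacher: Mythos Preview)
Your proposal is correct and follows exactly the approach the paper indicates: the paper does not give a detailed argument but simply records that the lemma ``follows from \cite[Theorem 1.9 and Lemma 4.7]{EGZ11}'', and you have correctly unpacked this by freezing time, identifying $\partial_t q=\partial_t u$ (resp.\ $\partial_t v$) at the touching point via the continuity of $\partial_t u$, and then applying the elliptic pluripotential/viscosity equivalence slice-wise.
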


There are two fundamental assumptions that we shall make on the semi-positive forms $\omega_t$.
We summarize them in the following definition:
 
\begin{defi} \label{def:regular}
We say that $t \mapsto \omega_t$ is regular if
 \begin{itemize}
\item $\exists \theta$ a semi-positive and big form s.t. $\theta \leq \omega_t$ for all t;
\item  $\exists \e \in {\mathcal C}^1$ with $\e(0)=0$ s.t. $(1-\e(t-s)) \omega_s  \leq \omega_t$.
\end{itemize}
\end{defi}

 We refer the reader to \cite[Lemma 2.7]{EGZ16} for a description of a wide class of forms
 $t \mapsto \omega_t$ which are regular.

\subsection{A refined local comparison principle}

We   establish here a local comparison principle which extends \cite[Theorem A]{EGZ15}.
We consider the following  complex Monge-Amp\`ere flows :
$$
e^{\dot u  + F (t,z,u)} \mu (t,z) + \beta (z)- (dd^c u)^n = 0, \, \, \text{in} \, \, \, D_T := ]0,T[ \times D, \leqno (MAF)_{F,\mu,\beta}
$$
where 

\begin{itemize}
\item $D$ is a complex manifold,
\item $\mu_t (x) = \mu (t,x) \geq 0$ is a continuous one parameter family of semi-positive volume forms on $D$,
\item  $\beta (x) \geq 0$ is a continuous semi-positive volume form on $D$.

\end{itemize}

\begin{lem} \label{lem:LCP1}
Assume $u : D_T \longrightarrow \R$ is a subsolution to $(MAF)_{F,\mu,\beta}$
and $v :   D_T \longrightarrow \R$ is a supersolution to  $(MAF)_{G,\nu,0}$, 
If  the function $u - v$ achieves a strict local maximum at some interior point $(t_0,z_0) \in D_T$ then :
%the following holds :

\smallskip
 
 1. If $\beta > 0$ and  $\partial_t u  \leq \tau$, $\tau \geq 0$,  in a neighbourhood of $(t_0,z_0)$, then
\begin{equation} \label{eq:Fest}
e^{  F (t_0,0, u  (t_0,z_0))} \mu (t_0,z_0) +  e^{- \tau} \beta (z_0)
\leq e^{ G (t_0,z_0,v (t_0,z_0))} \nu (t_0,z_0),
\end{equation}
Thus $ \nu (t_0,z_0) > 0$ and  
$F (t_0,z_0,  u (t_0,z_0) <  G (t_0,z_0,v (t_0,z_0))$ when $\mu = \nu $.

\smallskip

2. If $\beta \geq 0$ and $\mu = \nu > 0$  then $F (t_0,z_0,  u (t_0,z_0)) \leq  G (t_0,z_0,v (t_0,z_0))$. 
\end{lem}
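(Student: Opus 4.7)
My plan is to carry out the standard viscosity comparison argument for complex Monge-Amp\`ere flows, as developed in \cite[Theorem A]{EGZ15}, with the extra nonnegative volume form $\beta(z)$ threaded through the analysis. Since $u - v$ has a \emph{strict} local maximum at the interior point $(t_0,z_0)$, I first localize to a small parabolic neighbourhood $V_0 \Sub D_T$ on which this strict maximum persists and on which $F,G,\mu,\nu,\beta$ differ from their values at $(t_0,z_0)$ by at most $\delta$. Replacing $u$ (respectively $v$) by a temporal sup-convolution (respectively inf-convolution) produces Lipschitz-in-$t$ approximants $u^\epsilon$, $v_\epsilon$ that remain sub- and supersolutions of equations with $F,G$ replaced by sup/inf-convolutions, and for which $u^\epsilon - v_\epsilon$ still attains a strict maximum near $(t_0,z_0)$.

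A further spatial sup-/inf-convolution on $V_0$ makes these functions semi-convex/concave in $z$; doubling the spatial variable with penalty $|z-w|^2/\alpha^2$ and applying Jensen's maximum principle (cf.\ \cite{CIL92}) yields a family of Alexandrov points $(t_\alpha, z_\alpha, w_\alpha) \to (t_0,z_0,z_0)$ at which both regularized functions admit second-order Taylor expansions with matched time derivatives $\partial_t u^\epsilon = \partial_t v_\epsilon =: a$ and comparable Hessians $D^2 u^\epsilon \leq D^2 v_\epsilon$. The subsolution inequality for $u^\epsilon$ gives $(dd^c u^\epsilon)^n \geq e^{a + F(u^\epsilon)}\mu + \beta > 0$; this forces $dd^c u^\epsilon \geq 0$ there, and the Hessian comparison then yields $(dd^c v_\epsilon)_+^n = (dd^c v_\epsilon)^n \geq (dd^c u^\epsilon)^n$. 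Combining with the supersolution inequality for $v_\epsilon$ produces
$$
e^{F(u^\epsilon)}\,\mu(t_\alpha, z_\alpha) + e^{-a}\,\beta(z_\alpha) \leq e^{G(v_\epsilon)}\,\nu(t_\alpha, z_\alpha),
$$
up to errors vanishing with $\alpha,\epsilon$.

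Passing to the limit and using continuity of the data yields the desired inequality (\ref{eq:Fest}); the hypothesis $\partial_t u \leq \tau$ of part (1) survives the temporal sup-convolution up to $o(1)$ and gives $e^{-a} \geq e^{-\tau-o(1)}$, whence the claimed bound. When $\mu = \nu$, the strictly positive contribution $e^{-\tau}\beta(z_0)$ forces $\nu(t_0,z_0) > 0$ and the strict inequality $F(t_0,z_0,u(t_0,z_0)) < G(t_0,z_0,v(t_0,z_0))$. Part (2) follows directly from the same displayed inequality by discarding the nonnegative $\beta$-term and dividing by $\mu(t_0,z_0) > 0$ before taking logarithms; no bound on $\partial_t u$ is needed there.

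The main technical obstacle is the bookkeeping of the parabolic sup/inf-convolutions: one must verify quantitatively that the approximants remain sub- and supersolutions of equations with controlled perturbations of $F,G,\mu,\nu$, and that the one-sided bound on $\partial_t u$ indeed survives the sup-convolution in time. This is the core technical content of the proof in \cite{EGZ15}; the present refinement only requires carrying the extra $\beta$ term through the same machinery, which is compatible because $\beta$ enters additively in the subsolution inequality and does not interfere with the Hessian comparison at the Alexandrov points.
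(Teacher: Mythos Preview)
Your proposal is correct and follows essentially the same route as the paper: temporal sup/inf-convolution to reduce to the Lipschitz-in-$t$ case, followed by spatial doubling and the parabolic Jensen--Ishii maximum principle to produce matched time derivatives and ordered complex Hessians, then passage to the limit. The only cosmetic difference is that the paper applies Jensen--Ishii directly to the Lipschitz approximants without an intermediate spatial sup/inf-convolution step (the maximum principle already furnishes the needed jets), and it inserts the standard $-\eta/(T-t)$ penalty rather than relying on compact localization; neither point changes the substance of the argument.
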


\begin{proof}
 The problem is local, so we can assume that $D = \B \Subset \C^n$ is the unit ball,   $z_0 = 0$ and  
$
M :=  \max_{\bar \B_T} (u - v) = u (t_0,0) - v (t_0,0).
$

\smallskip

{\it Step 1.}   We first assume that {\it  $\partial_t u$ and $\partial_t v$ are bounded} in a neighborhood of $(t_0,0)$. 
We use a classical doubling trick. For any $\e > 0,$ we define for $(t,x,y) \in [0,T[ \times \bar \B \times \bar \B$, 
$$
w_\e (t,x) := u (t,x) - \frac{\e}{T - t} - v (t,y) - \frac{1}{2 \e} \vert x - y\vert^2.
$$

By upper semi-continuity there exists $(t_\e, x_\e, y_\e) \in [0,T[ \times \bar \B \times \bar \B$ such that
$$
M_\e  = \max_{[0,T[ \times \bar \B \times \bar \B} w_\e
= u (t_\e,x_\e) - \frac{\e}{T - t_\e} - v (t_\e,y_\e) - \frac{1}{2 \e} \vert x_\e - y_\e\vert^2.
$$

It follows from \cite[Proposition 3.7]{CIL92} 
that $\vert x_\e - y_\e \vert^ 2 = o (\e)$ and that there is a subsequence $\e_j \to 0$ such that $ (t_{\e_j},x_{\e_j},y_{\e_j})$
 converges to $(\hat t,\hat x,\hat x) \in [0,T[ \times \overline{\B}^2$ where
 $(\hat t,\hat x)$ is a maximum point of $u - v$ on $\overline{\B}_T$
and 
\begin{equation} \label{eq:limmax}
\lim_{j\to \infty} {M}_{\e_j}= {M}.
\end{equation}

To simplify notation we set  for any $j \in \N$,
  $(t_j,x_j,y_j) = (t_{\e_j},x_{\e_j},y_{\e_j})$. Extracting and relabelling we may assume 
that $(u(t_j,x_j,y_j))_j$ and $(v(t_j,x_j,y_j))_j$ converge. 
By the semicontinuity of $u$ and $v$,
\begin{equation} \label{eq:semicont}
\lim_{j\to \infty} u(t_j,x_j,y_j) \le u(\hat{t}, \hat{x}), \  \lim_{j\to \infty} v(t_j,x_j,y_j)\ge v(\hat{t}, \hat{x}).
\end{equation}
On the other hand $(\ref{eq:limmax})$ implies that:
\begin{equation*} \lim_{j\to \infty} u(t_j,x_j,y_j) - \lim_{j\to \infty} v(t_j,x_j,y_j) 
 = u(\hat{t}, \hat{x})-v(\hat{t}, \hat{x}).
\end{equation*}
Together with $(\ref{eq:semicont})$, this yields
\begin{equation}\label{eq:limit} 
\lim_{j\to \infty} u(t_j,x_j)=u(\hat{t},\hat{x}), \ \lim_{j\to \infty} v(t_j,y_j)=v(\hat{t},\hat{x}). 
\end{equation}
and 
\begin{equation} \label{eq:Lineq2}
M = u (\hat t,\hat x) - v(\hat t,\hat x)\cdot
\end{equation}

From our assumptions it follows that $(\hat t,\hat x) = (t_0,0)$.
  Therefore we can assume that  for any $j \in \N$,
 $(t_j,x_j,y_j) = (t_{\e_j},x_{\e_j},y_{\e_j}) \in ]0,T[ \times \B^2$  and the sequence converges to $(t_0,0,0) \in ]0,T[ \times \B^2$. 
  
  Applying the parabolic Jensen-Ishii's maximum principle  
(the technical assumption being satisfied since $u$ and $v$ are locally Lipschitz in $t$) to the functions 
  $U (t,x) := u (t,x) - \frac{\e}{T - t}$,  
$ v$ and the penality function $\phi (t,x,y) :=  \frac{1}{2 \e} \vert x - y\vert^2$ 
for any fixed $\e = \e_j$, we  find  approximate parabolic second order jets
  $(\tau_j,p_j^{\pm},Q_j^{\pm}) \in \R \times \R^{2 n} \times \mathcal S_{2 n}$   such that 
  
 $$
 \left(\tau_j + \frac{\delta}{(T - t_j)^2} ,p_j^+,Q_j^{+}\right) \in \mathcal {\bar P}^{2,+} u (t_j,x_j), \ \ \left(\tau_j,p_j^-,Q_j^{-}\right) \in \mathcal {\bar P}^{2,-} v (t_j,y_j)
 $$
  with $ p_j^+ =  - p_j^- =   \frac{1}{\e_j} (x_j - y_j)$ and $Q_j^+ \leq Q_j^-$.

Let $H_j^{\pm}$ the hermitian $(1,1)$-part of $Q_j^{\pm}$. Then $H_j^+ \leq H_j^{-}$ for any $j \in N$ and from the viscosity inequalities satisfied by $u$ it follows that $H^+_j > 0$, hence $H^{-}_j > 0 $. 
Applying the parabolic viscosity differential inequalities for $u$ and $v$, we obtain for all $j \in \N$,
\begin{eqnarray*} 
\lefteqn{ e^{\tau_j + \frac{\e_j}{(T-t_j)^2 } + F (t_j, x_j, u (t_j,x_j))} \mu (t_j,x_j) + \beta (x_j) \leq   (dd^c H^+_j)^n } \\
&\leq &  (dd^c H_j^-)^n   \leq   e^{\tau_j + G (t_j, y_j,v (t_j,y_j))} \nu (t_j,y_j),  
\end{eqnarray*}
which implies that

\begin{equation} \label{eq:LipEst}
e^{ \frac{\e_j}{(T-t_j)^2 } + F (t_j, x_j,u (t_j,x_j))} \mu (t_j,x_j) + e^{- \tau} \beta (x_j) \leq e^{G (t_j,y_j,v (t_j,y_j))}\nu (t_j,y_j).
\end{equation}

Letting $j \to + \infty$ and using (\ref{eq:limit}) (\ref{eq:Lineq2}) , we obtain the
 inequality (\ref{eq:Fest}) in the case when $u$ and $v$ are locally uniformly Lipschitz in the variable $t$.

\smallskip

{\it Step 2.} To remove the assumption on $u$ and $v$, we use Lipschitz-regularization in the time variable.  
Let $u^k$ and $v_k$ denote the time-variable Lipschitz regularization of $u$ and $v$ respectively in a $[0,T] \times B_2$ 
(see (\cite[Lemma 2.5]{EGZ15}). 
Fix $T_0 \in ]0,T[$ with $T_0 < t_0 < T$ and consider for $k > 1$ large enough
$$
\overline{M}_k :=  \max_{[T_0,T]  \times \bar \B} \{ u^k (t,z) - v_k (t,z)  \}\cdot
$$
Observe that
$$ 
\lim_{k\to \infty}  \max_{[T_0,T] \times \bar \B} \left\{u^k (t,z) - v_k(t,z) \right\}= \max_{[T_0,T] \times \bar \B} \left\{ u (t,z)- v (t,z)\right\} = M.
$$
By upper-semicontinuity, there exists $(t_k,z_k) \in [T_0,T] \times \bar \B$ converging to some $(t',z') \in [T_0,T] \times \bar \B$ such that 
$
\overline{M}_k =  u^k (t_k,z_k) -v_k (t_k,z_k),
$
and
$$
 u (t',z') - v(t',z') \geq \lim_{k \to + \infty} \overline{M}_k =  M.
$$

Thus $ u (t',z') - v(t',z')  = M$, with $(t',z') \in [T_0,T] \times \bar \B$.
  Since  $ u - v$  attains a strict maximum in $[0,T] \times \bar \B$ at  $(t_0,0)$, it follows that $(t',z') = (t_0,0)$.

Extracting and relabelling  we can assume that for all $k$ the maximum $M_k$ is attained at an interior point $(t_k,z_k) \in ]T_0,T[ \times \B$, 
the sequence $(t_k,z_k)$ converges to $ (t_0,0)$, and 
$$
 \lim_{k \to + \infty} \{u^k (t_k,z_k) -v_k (t_k,z_k)\}  =  u (t_0,0) - v(t_0,0).
$$

Now $\limsup_{k \to + \infty} u^k (t_k,z_k)  \leq u (t_0,0)$ and $\liminf_{k \to + \infty} v_k (t_k,z_k) \geq v (t_0,0)$
since $u$ and $-v$ are upper semi-continuous. Thus  extracting again if necessary, we get
 $$
 \lim_{k \to + \infty} u^k (t_k,z_k)  = u (t_0,0), \, \, \, \lim_{k \to + \infty} v_k (t_k,z_k) = v (t_0,0).
 $$
It follows now from \cite[Lemma 2.5]{EGZ15} that for $k$ large enough, 
$$
e^{\partial_t u^k  + F_k (t,z,  u )} \mu_k \leq 
(dd^c u^k)^n, \, \, 
\text{ in } ]T_0,T[ \times B_2,
$$
where
$
F_k (t,z, r) := \inf_{\vert s - t\vert \leq A \slash k}  F (s,z, r), \, \, \mu_k (t,z):=  \inf_{\vert s - t\vert \leq A \slash k}  \mu (s,z),
$
 $A > 0$.
Similarly the functions $v_k$ satisfy 
$$
e^{\partial_t v_k  +  G^k (t,z,  v)} \nu^k  \geq 
(dd^c v_k)^n, \, \, 
\text{ in } ]0,T[ \times B_2,
$$
where
$
G^k (t,z, r) := \sup_{\vert s - t\vert \leq A \slash k} G (s,z, r), \, \, \nu^k (t,z) := \sup_{\vert s - t\vert \leq A \slash k} \nu (s,z).
$

Now $u^k$ and $ v_k$ satisfy all the requirements of {\it Step 1}.
 If $\beta \geq 0$ we  can conclude from (\ref{eq:LipEst}) that 
$$
e^{ F_k (t_k,z_k,  u (t_k,z_k))} \mu_k (t_k,z_k)  \leq  e^{G^k (t_k,z_k,v (t_k,z_k))} \nu^k (t_k,z_k).
$$

Letting $k \to \infty $ we get
$
e^{  F (t_0,0,  u (t_0,z_0))}  \mu (t_0,z_0) 
\leq e^{ G (t_0,0,v (t_0,0))} \nu (t_0,0),
$
hence
$$
F (t_0,0,  u (t_0,z_0)) \leq  G (t_0,0,v (t_0,0))
$$
if $\mu (t_0,z_0)  = \nu (t_0,z_0) > 0$.
\end{proof}

\begin{rem} 
 Lemma~\ref{lem:LCP1} is still true if  we merely assume that $u - v$ achieves its maximum on 
 $\bar \B_T :=[0,T] \times \bar \B$ at interior points only i.e. the set 
$$
K := \{ (t,z) \in [0,T] \times \bar \B ;  u (t,z) - v (t,z) = \max_{\bar \B_T} (u - v)\} \Subset ]0,T[ \times \B
$$
 is relatively compact in $\B_T$. This version implies the comparison principle proved in \cite[Theorem 4.2]{EGZ15}.
\end{rem}

As a consequence we obtain the following comparison principle for  
$$
e^{\dot u  + F (t,z,u)} \mu (t,z) - (dd^c u)^n = 0, \, \, \text{in} \, \, \, D_T := ]0,T[ \times D, \leqno (MAF)_{F,\mu}.
$$

\begin{coro} \label{coro:LCP2}
Let $u :  [0,T[ \times D \longrightarrow \R$ be a subsolution to  $(MAF)_{F,\mu}$
and  $v :  [0,T[ \times D \longrightarrow \R$  a supersolution to   $(MAF)_{F,\mu}$. 
 Assume that
 
 \smallskip
 
$(i)$  $u - v$ achieves a local maximum at some  point $(t_0,z_0) \in ]0,T[ \times D$,

 \smallskip

$(ii)$  the function $z \longmapsto u (t,z) - 2 c \vert z\vert^2$ is  plurisubharmonic near $z_0$, for some
 $c > 0$ and $t$ close to $t_0$.

 \smallskip

Then $  u (t_0,z_0) \leq v (t_0,z_0)$ and $u \leq v$ in a neighborhood of $(t_0,z_0)$.
\end{coro}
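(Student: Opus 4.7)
The plan is to argue by contradiction, reducing the statement to Lemma~\ref{lem:LCP1} by using hypothesis~(ii) to manufacture a strictly positive zeroth-order term $\beta > 0$, so that case~1 of that lemma applies and yields the required strict inequality.

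Suppose $u(t_0, z_0) > v(t_0, z_0)$. The first step is to perturb $u$ into $\tilde u(t, z) := u(t, z) - \eta\bigl(|z - z_0|^2 + (t - t_0)^2\bigr)$ for a small $\eta \in (0, c)$, which forces $\tilde u - v$ to attain a \emph{strict} local maximum at $(t_0, z_0)$, still with value $u(t_0, z_0) - v(t_0, z_0) > 0$. Writing $\beta_0 := dd^c|z|^2$, hypothesis~(ii) gives $dd^c u \ge 2c\beta_0$, hence $dd^c \tilde u \ge (2c - \eta)\beta_0 \ge c\beta_0 > 0$. A routine inspection of viscosity tests from above, using the elementary bound $dd^c q + \eta\beta_0 \le (1 + \eta/c)\, dd^c q$ valid at any touching point (where $dd^c q \ge c\beta_0$), shows that $\tilde u$ remains a subsolution of an equation of the form $(MAF)_{F - C\eta, \mu}$ with $C$ absolute. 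The time Lipschitz regularizations of \cite[Lemma 2.5]{EGZ15} then yield $\tilde u^k$ and $v_k$, sub- and supersolutions of slightly regularized equations with $\partial_t \tilde u^k$ locally bounded by some $\tau$; the strict maximum of $\tilde u^k - v_k$ is attained at $(t_k, z_k) \to (t_0, z_0)$.

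The crucial consequence of hypothesis~(ii) is the pointwise lower bound $(dd^c \tilde u^k)^n \ge c^n \beta_0^n$. Taking any $\lambda \in (0, 1)$ and forming a convex combination with the subsolution inequality yields
\[
(dd^c \tilde u^k)^n \ge (1 - \lambda)\, e^{\dot{\tilde u}^k + F(\tilde u^k)}\, \mu + \lambda\, c^n\beta_0^n,
\]
so $\tilde u^k$ becomes a subsolution of $(MAF)_{F + \log(1 - \lambda) - C\eta,\, \mu,\, \lambda c^n\beta_0^n}$ with \emph{positive} $\beta = \lambda c^n \beta_0^n$. Applying Lemma~\ref{lem:LCP1} case~1 at $(t_k, z_k)$, combined with $\tilde u^k > v_k$ there and the monotonicity of $F$ (giving $e^{F(\tilde u^k)} \ge e^{F(v_k)}$), then dividing by $\lambda$ and passing to the limit $k \to \infty$, $\eta \to 0$, produces
\[
e^{-\tau}\, c^n\beta_0^n(z_0) \le e^{F(v(t_0, z_0))}\, \mu(t_0, z_0).
\]

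When $\mu(t_0, z_0) = 0$, the right-hand side vanishes and the contradiction is immediate. The main obstacle is the complementary regime $\mu(t_0, z_0) > 0$, where the preceding inequality is no longer by itself a contradiction. In that case one supplements the argument by the standard parabolic strict-monotonicity trick, adding a penalty $\varepsilon/(T_0 - t)$ to $u$ for some $T_0 > t_0$ in the spirit of step~1 of the proof of Lemma~\ref{lem:LCP1}, which together with case~2 of that lemma (available since $\mu = \nu > 0$ on a neighborhood) upgrades the non-strict $F(u(t_0, z_0)) \le F(v(t_0, z_0))$ to a strict inequality and yields the contradiction. Once $u(t_0, z_0) \le v(t_0, z_0)$ is established, the conclusion $u \le v$ in a neighborhood of $(t_0, z_0)$ follows from hypothesis~(i) and the non-positivity of the local maximum of $u - v$.
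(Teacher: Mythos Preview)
Your overall strategy is the same as the paper's: perturb $u$ quadratically to make the local maximum strict, exploit hypothesis~(ii) to manufacture a strictly positive $\beta$-term, and invoke Lemma~\ref{lem:LCP1}. However, the limiting step contains a genuine gap.

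The bound $\tau$ on $\partial_t \tilde u^k$ that you need in order to apply case~1 of Lemma~\ref{lem:LCP1} is not a fixed constant independent of $k$: it is the Lipschitz constant of the time sup-convolution, hence of order $k$. Consequently, when you pass to the limit $k\to\infty$, the factor $e^{-\tau}$ in your displayed inequality tends to zero, and
\[
e^{-\tau}\, c^n \beta_0^n(z_0)\;\le\; e^{F(t_0,z_0,v(t_0,z_0))}\,\mu(t_0,z_0)
\]
degenerates to $0\le 0$ when $\mu(t_0,z_0)=0$. No contradiction follows, and your treatment of the case $\mu(t_0,z_0)=0$ collapses. The paper avoids this trap by using the $\beta$-term only \emph{qualitatively}: for each fixed $k$ it forces $\mu(t_k,z_k)>0$, which allows one to divide by $\mu$ and obtain
\[
F_k\bigl(t_k,z_k,u^k_\alpha(t_k,z_k)\bigr)+2\alpha(t-t_k)+\ln(1-\alpha)\;\le\;F\bigl(t_k,z_k,v_k(t_k,z_k)\bigr).
\]
The $\beta$-term has now been discarded, so one may let $k\to\infty$ and $\alpha\to 0$ without any blow-up, yielding $F(t_0,z_0,u(t_0,z_0))\le F(t_0,z_0,v(t_0,z_0))$ directly; no case distinction on $\mu(t_0,z_0)$ is needed.

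Your handling of the case $\mu(t_0,z_0)>0$ is also incomplete. The claim that a penalty $\varepsilon/(T_0-t)$ ``upgrades'' $F(u(t_0,z_0))\le F(v(t_0,z_0))$ to a strict inequality is not substantiated: adding such a penalty moves the maximum point, and you have not tracked how this interacts with the time-regularization and with the contradiction hypothesis $\tilde u^k(t_k,z_k)>v_k(t_k,z_k)$. Note also that from $u(t_0,z_0)>v(t_0,z_0)$ and $F$ merely non-decreasing you only get $F(u(t_0,z_0))\ge F(v(t_0,z_0))$, which together with the non-strict inequality coming from case~2 gives equality, not a contradiction.
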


\begin{proof}
 We may assume that $z_0 = 0$ and there exists a constant $c > 0$ and $r > 0$ such that for $t - t_0 \leq r$, the function 
$z \longmapsto u (t,z) - 2 c \vert z\vert^2$ is plurisubharmonic in a neighborhood of the unit ball $\bar \B$.
Fix $\alpha \in ]0,1[$ and consider  
$$
u_\alpha (t,z) := u (t,z) - c \alpha \vert z\vert^2 - \alpha (t-t_0)^2, \; \; (t,z) \in [0,T[\times \bar D.
$$

The function $u_\alpha - v$ achieves a {\it strict local maximum} at $(t_0,0)$
and  for $\vert t - t_0\vert  \leq r$,
$$
z \mapsto u_\alpha (t,z) - (1-\alpha) u (t,z) - \alpha c \vert z\vert^2 - \alpha (t-t_0)^2 = \alpha (u (t,z) - 2 c \alpha \vert z\vert^2),
$$
is plurisubharmonic  in a neighborhood of the unit ball $\bar \B$, with 
$$
dd^c u_\alpha (t,\cdot) \geq (1-\alpha) \, dd^c u (t,\cdot) + \alpha \, c \, \beta_0,
$$
for $\vert t - t_0 \vert \leq r$,
where $\beta_0 := dd^c \vert z\vert^2$.
Thus $z \longmapsto u_\alpha (t,z)$ is strictly plurisubharmonic  for $\vert t - t_0 \vert \leq r$ and satistifes 
$$
 (dd^c u_\alpha)^n \geq (1 - \alpha)^n (dd^c u) + \alpha^n c^n \beta_0^n \geq e^{\partial_t u_\alpha + 2 \alpha (t-t_0) + F (t,z,u_\alpha) + \ln (1 - \alpha)} \mu,
$$
in the sense of viscosity in $]t_0-r , t_0 + r[ \times \B$.

\smallskip

Assume first that $\partial_t u \leq \tau$ in a neighbourhood of $(t_0,z_0)$, then $\partial_t u_\alpha \leq \tau + 2 \alpha t_0.$
Since $u_\alpha - v$ achieves a strict local maximum at $(t_0,0)$, we can apply Lemma~\ref{lem:LCP1} to conclude that
{\footnotesize
\begin{equation} \label{eq:Ineq}
 e^{F (t_0,0,u_\alpha (t_0,0)) + 2 \alpha (t-t_0) + \ln (1 - \alpha)} \mu (t_0,0) + \alpha^n c^n e^{\tau + 2 \alpha t_0} \beta_0^n\leq e^{G (t_0,0,v(t_0,0))} \nu (t_0,0).
\end{equation}
}
Letting $\alpha \to 0$ we obtain the required estimate.

\smallskip

To treat the general case, we approximate $u$ by a decreasing sequence $(u^k)$ of $k$-Lipschitz function in $t$.
 It is easy to check that $u^k$ satisfies the requirement of the first part and $\partial_t u^k \leq k$ . 
Arguing as in the proof of Lemma~\ref{lem:LCP1} we obtain   a sequence ($t_k,z_k) \in [0,T] \times \bar \B$ converging to $(t_0,0)$ such that $u^k - v_k$ achieves its maximum at $(t,_k,z_k)$ and 
$\lim (u^k (t_k,z_k) = u (t_0,0)$, $\lim_{k \to + \infty} v (t_k,z_k) = v (t_0,0)$ and from (\ref{eq:Ineq}) we conclude that
\begin{equation*}
 e^{F_k (t_k,z_k,u_\alpha (t_k,z_k)) + 2 \alpha (t-t_k) + \ln (1 - \alpha)} \mu (t_k,z_k) + \alpha^n c^n e^{k} \beta_0^n\leq e^{G (t_k,z_k,v(t_k,z_k))} \nu (t_k,z_k).
\end{equation*}

It follows that for all $k >>1$ and $\alpha \in ]0,1[$, 
\begin{equation*}
 e^{F_k (t_k,z_k,u^k_\alpha (t_k,z_k)) + 2 \alpha (t-t_k) + \ln (1 - \alpha)} \mu (t_k,z_k) \leq e^{G (t_k,z_k,v(t_k,z_k))} \nu (t_k,z_k).
\end{equation*}
This implies that $\nu (t_k,z_k) > 0$ and if $\mu = \nu$ we obtain  
$$
e^{F_k (t_k,z_k, u^k_\alpha (t_k,z_k)) + 2 \alpha (t-t_k) + \ln (1 - \alpha)}  \leq e^{G (t_k,z_k,v(t_k,z_k))},
$$
Letting $\alpha \to 0$ and $k \to + \infty$, we obtain the required inequality. 
\end{proof}

\subsection{Viscosity comparison principles for manifolds with boundary}
 
%Let $X$ be a compact K\"ahler manifold of dimension $n$.
 Recall that the family $\omega_t$ satisfies   $\omega_t \geq \theta$ for all
 $t \in [0,T[$ and that $M$ is included in the ample locus of the cohomology class $\{\theta\}$.
Our purpose here is to establish the following version of the comparison principle:

\begin{theo} \label{thm:pcpabord}
Let $\f(t,x)$ (resp. $\p(t,x)$) be   a subsolution (resp. a supersolution) to the complex Monge-Amp\`ere flow (\ref{krfglobal})
in $M_T = ]0,T[ \times M$ which is bounded from above (resp. below).
Assume  that either $\partial_t \f$
 is locally bounded or the family $t \mapsto \omega_t$ is regular.
 Then for all $(t,x) \in [0,T[ \times M$,
$$
(\f-\p)(t,x) \, \, \leq  \, \, \sup_{\partial_P M} (\f^*-\p_*)_+,
$$
where 
$
\partial_P M := \{0\} \times \bar M) \cup ([0,T[ \times \partial M)
$
denotes  the parabolic boundary.
\end{theo}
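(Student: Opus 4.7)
The strategy is to argue by contradiction, using a perturbation that exploits the ampleness of $\{\theta\}$ on $M$ together with the refined local comparison principle (Lemma~\ref{lem:LCP1}). Suppose, for contradiction, that
$$\mathcal{M} := \sup_{M_T}(\varphi^* - \psi_*) > m := \sup_{\partial_P M}(\varphi^* - \psi_*)_+.$$
The goal is to construct a perturbation $\tilde\varphi$ of $\varphi$ for which (i) $\tilde\varphi - \psi \leq m$ on $\partial_P M$, (ii) $\sup(\tilde\varphi - \psi) > m$ is attained at some interior point $(t_1,x_1) \in M_T$, and (iii) $\tilde\varphi$ is a viscosity subsolution of a \emph{strictly} stronger flow of the form $(MAF)_{F_{\delta,\eta}, \mu, \beta_\delta}$ with $\beta_\delta > 0$. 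Lemma~\ref{lem:LCP1}(1) applied at $(t_1,x_1)$ will then yield a contradiction as the perturbation is sent to zero.

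\emph{Step 1 (ample-locus perturbation).} The hypothesis that $M$ is contained in the ample locus of the big semi-positive class $\{\theta\}$ yields a reference Kähler form $\omega_X$ on $\overline{M}$ and a quasi-psh function $\rho$ on $\overline{M}$, smooth on $M$, normalized by $\sup\rho = 0$, with $\theta + dd^c\rho \geq \varepsilon_0\,\omega_X$ on $M$ for some $\varepsilon_0 > 0$, and with $\rho(x) \to -\infty$ on $\overline{M}\setminus\operatorname{Amp}(\theta)$. For $\delta,\eta > 0$ small, set
$$\tilde\varphi(t,x) := (1-\delta)\varphi(t,x) + \delta\,\rho(x) - \eta\, t.$$
Since $\rho \leq 0$, one has $\tilde\varphi \leq \varphi$, hence $\tilde\varphi - \psi \leq m$ on $\partial_P M$. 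On the other hand, at a point $(t_*,x_*) \in M_T$ where $\varphi^*-\psi_*$ is close to $\mathcal M$, the value $\tilde\varphi - \psi$ still exceeds $m$ for $\delta,\eta$ sufficiently small; hence $\sup(\tilde\varphi^*-\psi_*)$ is attained at some interior point $(t_1,x_1) \in \,]0,T[\,\times M$, necessarily away from the non-ample locus (where $\rho=-\infty$).

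\emph{Step 2 (strict subsolution and local comparison).} From $\omega_t \geq \theta$ we obtain
$$\omega_t + dd^c\tilde\varphi \,\geq\, (1-\delta)(\omega_t + dd^c\varphi) + \delta(\theta + dd^c\rho) \,\geq\, (1-\delta)(\omega_t + dd^c\varphi) + \delta\varepsilon_0\,\omega_X,$$
and by the Newton-type inequality for semi-positive $(1,1)$-forms,
$$(\omega_t + dd^c\tilde\varphi)^n \,\geq\, (1-\delta)^n(\omega_t + dd^c\varphi)^n + (\delta\varepsilon_0)^n\,\omega_X^n.$$
Combining with the subsolution inequality for $\varphi$, the Lipschitz dependence of $F$ on $r$, and $\dot{\tilde\varphi} = (1-\delta)\dot\varphi - \eta$, this shows that $\tilde\varphi$ is a viscosity subsolution of a flow of the same shape with $F$ replaced by $F_{\delta,\eta} := F - C(\delta+\eta)$ (a shift absorbing the factors $n\log(1-\delta)$, $\eta/(1-\delta)$, and the change of argument of $F$ from $\varphi$ to $\tilde\varphi$) and with an extra term $\beta_\delta := (\delta\varepsilon_0)^n\omega_X^n > 0$ added to the right-hand side. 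Passing to a coordinate ball $D \subset M$ around $x_1$ where $\omega_t = dd^c h_t$ for smooth local potentials, $u := h_t + \tilde\varphi$ and $v := h_t + \psi$ are respectively a subsolution of $(MAF)_{F_{\delta,\eta},\mu,\beta_\delta}$ and a supersolution of $(MAF)_{F,\mu,0}$ on $D_T$, with $u-v$ attaining a strict local maximum at $(t_1,x_1)$. Lemma~\ref{lem:LCP1}(1) then yields
$$e^{F_{\delta,\eta}(t_1,x_1,\tilde\varphi(t_1,x_1))}\mu(x_1) + e^{-\tau}\beta_\delta(x_1) \,\leq\, e^{F(t_1,x_1,\psi(t_1,x_1))}\mu(x_1),$$
for any upper bound $\tau$ of $\partial_t u$ near $(t_1,x_1)$. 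Since $\tilde\varphi(t_1,x_1) > \psi(t_1,x_1)$ and $F$ is non-decreasing in $r$, the first term of the left-hand side tends to a quantity $\geq e^{F(t_1,x_1,\psi(t_1,x_1))}\mu(x_1)$ as $\delta,\eta \to 0$, while $e^{-\tau}\beta_\delta(x_1)$ remains strictly positive—yielding the contradiction, provided $\tau$ is uniformly controlled.

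\emph{Main obstacle: control of $\tau$.} This is exactly where the two alternative hypotheses enter. If $\partial_t\varphi$ is locally bounded, $\tau$ is uniformly bounded and the argument is complete. Otherwise, the regularity hypothesis~\eqref{eq:techcond} on $t\mapsto\omega_t$ is required in order to first replace $\varphi$ by its inf-convolution in time
$$\varphi^\varepsilon(t,x) := \sup_s\bigl\{\varphi(s,x) - \tfrac{1}{\varepsilon}|t-s|\bigr\},$$
which is $C/\varepsilon$-Lipschitz in $t$. The condition $\omega_s \leq (1+E(\varepsilon))\omega_t$ for $|s-t|\leq\varepsilon$ provided by~\eqref{eq:techcond} guarantees that $\varphi^\varepsilon$ is a subsolution of the flow with $\omega_t$ enlarged by the factor $1+E(\varepsilon)$ and $F$ correspondingly modified, as in \cite[Lemma 2.5]{EGZ15}. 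Running Steps 1--2 on $\varphi^\varepsilon$ and sending $\varepsilon \to 0$ jointly with $\delta, \eta$ along a scale where $\beta_\delta$ dominates $e^{-C/\varepsilon}$ (e.g.\ with $\delta$ a small positive power of $\varepsilon$), while checking that the perturbed maxima converge to the original one, closes the argument; pinning down this joint scaling is the technically most delicate point.
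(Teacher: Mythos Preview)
Your overall strategy—perturbing $\varphi$ by a convex combination with the $\theta$-psh function $\rho$ (smooth on $M$, $\to -\infty$ off the ample locus), localizing, and invoking the refined local comparison principle—is exactly the paper's. But the contradiction you set up does not close, and the gap lies in how you track the error terms in $F_{\delta,\eta}$.

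You write $F_{\delta,\eta} = F - C(\delta+\eta)$, lumping all perturbations into a negative shift. This loses the decisive sign: the penalization $-\eta t$ contributes $+\eta/(1-\delta)$ to the exponent (since $\partial_t\varphi = (\partial_t\tilde\varphi + \eta)/(1-\delta)$), which is \emph{positive}. The paper exploits precisely this. It uses instead $-\eta/(T-t)$ (which additionally forces the maximum away from $t=T$ when $T<\infty$, a case your $-\eta t$ does not handle) and adds a further term $-A(1-\delta)t$ with $A=A(\delta)$ chosen to cancel the $O(\delta)$ error coming from $n\log(1-\delta)$ and from the stray $\frac{\delta}{1-\delta}\partial_t\tilde\varphi$ (the latter is why a \emph{lower} bound on $\partial_t\varphi$ is needed in the first case). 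With these choices the effective density becomes $\tilde F = F + \eta/(T-t)^2$, strictly larger than $F$. Then Corollary~\ref{coro:LCP2} (which already packages Lemma~\ref{lem:LCP1} with the $\alpha|z|^2$ strict-maximum trick and its own time-regularization, so no explicit $\tau$ appears at this stage) yields $F(\cdot,\tilde\varphi) + \eta/(T-t_0)^2 \leq F(\cdot,\psi)$ at the interior maximum, forcing $\tilde\varphi(t_0,x_0) \leq \psi(t_0,x_0)$ by monotonicity of $F$. Letting $\delta,\eta\to 0$ finishes.

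By contrast, your plan is to extract the contradiction from the residual term $e^{-\tau}\beta_\delta > 0$. This cannot work: with your $F_{\delta,\eta}=F-C(\delta+\eta)$, Lemma~\ref{lem:LCP1}(1) gives at the interior maximum
\[
e^{F(\cdot,\tilde\varphi) - C(\delta+\eta)}\mu + e^{-\tau}\beta_\delta \ \leq\ e^{F(\cdot,\psi)}\mu,
\]
and monotonicity only yields $F(\cdot,\tilde\varphi) \geq F(\cdot,\psi)$, so one would need $e^{-\tau}\beta_\delta > (1-e^{-C(\delta+\eta)})\,e^{F(\cdot,\psi)}\mu$. Since $\beta_\delta \sim \delta^n$ while the right-hand side is $\sim \delta+\eta$, this fails for $n\geq 2$ as $\delta\to 0$. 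In the time-regularized case it is worse: $e^{-\tau}=e^{-C/\varepsilon}$ decays faster than any power of $\varepsilon$, so no polynomial choice of $\delta$ in $\varepsilon$ rescues the balance, and your proposed ``joint scaling'' cannot be carried out. The $\beta$-term is used in the paper only \emph{inside} Corollary~\ref{coro:LCP2} to deal with the degenerate case $\mu(x_0)=0$; the strictness that drives the global argument comes entirely from the $\eta$-contribution to $\tilde F$, not from $\beta_\delta$.
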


Here the function $\f^*$ (resp. $\psi_*$) is the upper (resp. lower) semi-continuous extension of $\f$ (resp. $\psi$) to $\bar M$.
Note that this result contains the local comparison principle for domains of $\C^n$
established in \cite{EGZ15}.

\begin{proof} 
We  assume first that $\partial_t \f$ is  locally bounded from below in $]0,T[ \times M$. 
 Fix $\eta > 0$, $\e \in ]0,1[$ and  $\delta := \e (1 + \e)^{-1}$ so that $1- \delta =  (1 + \e)^{-1}$. 
 Consider
$$
\tilde \f (t,x) := (1 - \delta)\f(t,x)  + \delta \rho(x) - \frac{\eta}{T - t} - A (1 - \delta) t,
$$
where $A = A (\e) > 0$ will be chosen later,
$\rho \leq \f$ is a $\theta$-psh function satisfying $\theta + dd^c \rho \geq \beta$, $\beta$ is a 
K\"ahler form on $X$. Such a function exists since the cohomology class $\eta$ of $\theta$ is big. 
One can moreover impose $\rho < 0$ to be smooth in the ample locus $\Omega =\mathrm{Amp} \{ \theta \}$,
with analytic singularities,
and such that $\rho(x) \rightarrow -\infty$ as $x \rightarrow \partial \Omega$.

Since $ \tilde \f^* - \p_*$ is upper semi-continuous on $\bar M$,  the maximum of  $\tilde  \f^* - \p_*$ on $\bar M_T := [0,T[ \times \bar M$ is achieved at some point $(t_0,x_0) \in [0, T[ \times \bar M$, i.e. 
$$
 \sup_{ (t,x) \in  M_T} (\tilde \f^ (t,x) - \p_  (t,x)) = \tilde \f^* (t_0,x_0) - \p_*  (t_0,x_0).
$$

If $(t_0,x_0) \in \partial_P M_T$ we are done, 
so we  assume $(t_0,x_0) \in ]0,T[ \times M$, hence $\tilde \f^* (t_0,x_0) - \p_*  (t_0,x_0) = \tilde \f (t_0,x_0) - \p  (t_0,x_0)$. 
Observe that for all $t \in [0,T[$,   $\tilde \f$ satisfies  $\omega_t + dd^c \tilde \f_t \geq \delta \beta$ 
on $M$ and

$$
(\omega + dd^c \tilde \f)^n \geq (1 - \delta)^n (\omega + dd^c \f)^n + \delta^n \beta^n,
$$
in the viscosity sense in  $]0,T[ \times M$.
Now $\tilde \f \leq \f$ since $\rho < \f$ and 
$$ 
\partial_t \f = (1 + \e) \partial_t \tilde \f + \frac{\eta (1 + \e)}{(T - t)^2} + A \geq (1 + \e) \partial_t \tilde \f + \frac{\eta}{(T - t)^2} + A,
$$ 
hence
$$
(\omega_t + dd^c \tilde \f_t)^n \geq e^{(1 +\e) \partial_t \tilde \f + \frac{\eta}{(T -t)^2} + A + F (t,x, \tilde \f (t,x) + n \log (1 - \delta)}  \mu(x).
$$

We now   localize near $x_0$ and apply Corllary~\ref{coro:LCP2}. 
Choose  local coordinates  $z $ near $x_0$,
 identifying a closed neighborhood  of $x_0$ with
the closed complex ball $\bar B_2 := B(0,2) \subset \C^n$ of radius $2$, sending $x_0$ to the origin in $\C^n$. 
Let $ h = h_{\omega}  (x)$  be a smooth local potential for $\omega$ in $ B_2$,
i.e. $dd^c h_{\omega} = \omega $ in $ B_2$.

The function  $u (t,z):= \tilde \f (t,x) + h (t, x) $  is u.s.c. in $  [0,T[ \times B_2$ and for all
 $t \in [0,T[$, $ dd^c u_t = \omega_t + dd^c \tilde \f_t \geq \delta \beta$ on $B_2$, hence $u(t,\cdot) $ is strictly psh in $B_2$.

Our hypothesis insures that   $\partial_t \tilde \f \geq - C$ in $]t_0 - r,t_0 + r[ \times B_2$
if $0 < r < \min \{t_0, T- t_0\}$, hence $u$ satisfies  
$$
(dd^c u)^n \geq e^{\partial_t u   + \tilde F (t,z, u)}  \tilde \mu, \, \, 
\text{ in }  B_2,
$$
where $\tilde{\mu}=  z_* (\mu) > 0$  is a  positive continuous volume form on $ B_2$ and 
{\small
$$
\tilde F (t,z,r)  :=  \frac{\eta}{(T -t)^2} + F (t, x, r - h (t,x)) + A 
 - \partial_t h (t,x) - C \e + n \log (1 - \delta),
$$
}
is continuous in $[0,T[ \times B_2 \in \R$.
Similarly  the lower semi-continuous function 
$v  := \p \circ z^{- 1}  + h \circ z^{- 1} $
 satisfies the viscosity differential inequality
\begin{equation} \label{eq:supersol}
(dd^c v)_+^n \leq e^{ \partial v + \tilde G (t,\zeta, v (t,\zeta))} \tilde \nu , \, \, 
\text{ in }  B_2,
\end{equation} 
where $\tilde \nu :=  z_{*} (\nu) >  0$ 
is a  continuous volume form on $  B_2$ and $\tilde G (t,\zeta, r) := G (t, x, r) - h (t,x)) - \partial_t h (t,x)$ is continuous in $[0,T[ \times B_2 \times  \R$.

\smallskip

When $\mu = \nu$ and $F = G$ we apply Lemma~\ref{coro:LCP2} to conclude that
 \begin{equation*}
  \tilde F (t_0, 0, u (t_0,0))   \leq \tilde G (t_0,0, v (t_0,0)),
\end{equation*} 
hence   
 \begin{equation*}
 \frac{\eta}{(T -t_0)^2} + F (t_0, x_0, \tilde \f (t_0,x_0)) + A - C \e - n \log (1 + \e)    \leq F (t_0,x_0, \p (t_0,x_0)).
\end{equation*} 

Choosing $A = A (\e) := C \e + n \log (1 + \e)>0$  we conclude that 
$$
\frac{\eta}{(T -t_0)^2} + F (t_0, \tilde \f (t_0,x_0)) \leq F (t_0,\p(t_0,x_0)).
$$

We infer
$
\tilde \f(t_0,x_0 ) \leq \p(t_0,x_0)
$
since $F$ is non decreasing in the last variable.
As  $\tilde \f - \p$ achieves its supremum on $ M_T$ at $(t_0,x_0)$, we conclude that 
$$
(1 - \delta)\f(t,x)  + \delta \rho(x) - \frac{\eta}{T - t} -  A (1 - \delta) t  \leq  \p(t,x),
  $$
for all $(t,x) \in M_T$. Now $A  = C \e +  n \log (1 + \e) \to 0 $ as $\e \to 0$ and
 $  M \cap (\rho > - \infty\}  =\emptyset$. Letting $\e \to 0$ and $\eta \to 0$,  we 
 thus conclude that
$\f  \leq \psi $ in $M$.
 
\smallskip

We finally remove the condition on $\partial_t \f$  by approximating $\f$  by a decreasing  sequence 
$\f_k$ of $k$-Lipschitz functions in the time variable that satisfies an approximate viscosity sub-inequalities(see \cite{EGZ16}). 
This is  where the regularity assumption on $(t,x) \mapsto \omega_t(x)$ is used.
This permits to pass to the limit successfully and finaly get the result also in this case as in the proof of 
 \cite[Corollary 2.6]{EGZ16}. 
\end{proof}

 \begin{rem} 
  J.Streets pointed out to us that the proof of   \cite[Theorem~2.1]{EGZ16} is incomplete.
  We have been unable to fully repair the corresponding proof. 
  The proof of Theorem~\ref{thm:pcpabord} that we provide shows in particular that the comparison principle \cite[Theorem~2.1]{EGZ16}
  is at least valid in some Zariski open set (the ample locus of the cohomology class of $\theta$).
  
 This version is sufficient for all geometric applications.
Since the sub/super-solutions to the Cauchy problem for the parabolic complex Monge-Amp\`ere equation  constructed there are  continuous in $X_T$ (see \cite[Proposition 3.5]{EGZ16}), we can consider the upper envelope $\f$ of all the subsolutions to the corresponding Cauchy problem.
 
  By standard results from viscosity theory, $\f^*$ is a subsolution to the equation   while $\f_*$ is a supersolution. 
 Theorem~\ref{thm:pcpabord} thus shows that the statement in \cite[Corollary 3.3]{EGZ16} is valid on the regular part of $X$. Namely for all $(t,x) \in \R^+ \times X^{reg}$
 $$
 \f^* (t,x) - \f_* (t,x) \leq \max_{x \in X} (\f^* (0,x) - \f_* (0,x))_+.
 $$
 
Since the  sub/super-barriers constructed in \cite[Proposition 3.5]{EGZ16} and  \cite[Lemma 3.8]{EGZ16} are continuous, we deduce as in \cite[Theorem~3.11]{EGZ16} that $\f^* (t,x) \leq  \f_* (t,x)$ in $\R^+ \times X^{reg}$. Hence $\f^*  = \f_* = \f (t,x)$ in $\R^+ \times X^{reg}$. 
Thus $\f$ is the unique viscosity solution to the Cauchy problem.
 \end{rem}

\section{Long term behaviour of the normalized K\"ahler-Ricci flow} \label{sec:longterm}

We now study the long-term behavior of the normalized K\"ahler-Ricci flow
on an abundant minimal model $X$ of positive Kodaira dimension
and prove { Theorem D} assuming $X$ satisfies Conjecture C.

\subsection{The strategy}

\subsubsection{Normalization of the scalar parabolic equation}
Let $X$ be a compact $n$-dimensional K\"ahler variety with canonical singularities
such that $K_X$ is semi-ample. We let $j:X \rightarrow J$ be an Itaka morphism and
$A$ an ample line bundle on $J$ such that $K_X=j^*A$.
We also let $\kappa=\kappa(X)>0$ denote the Kodaira dimension of $X$.

Denote by $\omega_t$ the normalized K\"ahler-Ricci flow on $X$ starting at $\omega_0$ \cite{EGZ16}. 
The cohomology class $\alpha_t$ of $\omega_t$ satisfies
$$
\alpha_t=e^{-t} \alpha_0+(1-e^{-t}) c_1(K_X),
$$
where $\alpha_0$ is the initial K\"ahler class. Pick $\omega_J \in \{A\}$ a K\"ahler form
and set 
$$
\chi:=j^* \omega_J.
$$
We also pick $\omega_0 \in \alpha_0$ a K\"ahler form representing $\alpha_0$ and set
$$
\theta_t:=\chi+e^{-t}(\omega_0-\chi)=e^{-t} \omega_0+(1-e^{-t}) \chi.
$$
Note that this family (as well as its pull-back on any log-resolution) satisfies all the requirements
from section \ref{sec:visc}.  

We fix $v(h)$ a canonical volume form 
on $X$ such that 
$
\chi=-{\rm{Ric}}(v(h)) \, \, \, \text{in} \, \, \, X.
$
We assume without loss of generality that $v(h)$ and $\alpha_0$ are normalized so that
$$
\int_X v(h)=\int_X \chi^{\kappa} \wedge \omega_0^{n-\kappa} 
=c_1(K_X)^{\kappa} \cdot  \alpha_0^{n-\kappa}=1.
$$

It is classical that  the normalized K\"ahler-Ricci flow is equivalent 
the following parabolic complex Monge-Amp\`ere flow of potentials,
\begin{equation} \label{eq:flotfinal}
\frac{(\theta_t+dd^c \f_t)^n}{C_n^{\kappa} e^{-(n-\kappa)t}}=  e^{\partial_t \f+\f_t} v(h),
\end{equation}
starting from an initial smooth 
(or possibly continuous) 
K\"ahler potential $\f_0 \in PSH(X,\omega_0)$.
We have normalized here both sides so that the volume of the left hand side converges to
$1$ as $t \rightarrow +\infty$. Here $C_n^k$ denotes the binomial coefficient
$
C_n^k=\left( \begin{array}{c} n \\ k \end{array} \right).
$ 
The initial value problem $\phi(-, 0)=\varphi_0$ for this flow admits a unique  viscosity solution which is (locally) upper bounded in $\R^+ \times X$ and continuous in $\R^+ \times X^{reg}$
 and $\omega_t=\theta_t+dd^c\varphi$ is the normalized K\"ahler-Ricci flow starting at $\omega_0+dd^c\varphi_0$.
We remark, although we will not use this fact, that $\varphi$ can be shown to be smooth on $]0,\infty[ \times  X^{reg}$.

\subsubsection{Canonical  and semi-flat currents}
 The {\it canonical current} is the positive closed $(1,1)$-current $T_{can}=\chi+dd^c \f_{\infty}$ on $X$.
Its potential $\f_{\infty}$ is continuous in $X$ and  smooth in a Zariski open set $X \setminus D$, where $D$ denotes a divisor on $X$
that both contains the singular fibers of $j$ and the singular points of $X$.

%Recall that $\f_{\infty}=\p \circ f$, where $\p \in PSH(J,\omega_J)$ is 
%continuous on $J$ and smooth in 
%$J \setminus D'=\pi(X \setminus D)$ where it satisfies
%$$
%(\omega_J+dd^c \p)^{\kappa}=e^{\p} f_* \Omega.
%$$
Thus for all compact subset $K \subset X \setminus D$, there exists $C_K>0$ such that
\begin{equation} \label{eq:c2bdd}
0 \leq \chi+dd^c \f_{\infty} \leq C_K \, \chi.
\end{equation}

\smallskip

Using Conjecture C we will assume that the {\it semi-flat current} $\omega_{SF}=\omega_0+dd^c \rho$ defined in the introduction is also smooth on $X \setminus D$. 
%is defined as follows:  a generic fiber $X_y$ of the Iitaka fibration $j:X \rightarrow J$ is 
%Ricci-flat current on $X_y$ which is cohomologous to ${\omega_0}_{|X_y}$.
%Its existence is established in \cite{EGZ09}. The potential $\rho_y$ is
%here normalized so that $\int_{X_y} \rho_y \, {\omega_0}_{|X_y}^{n-\kappa}=0$, so that
%$x=(y,z) \mapsto \rho_y(z)=\rho(x)$ is smooth away from the singular fibers, when $X$ is smooth.
%A recent result of Choi \cite{Choi15} insures in this case that $\rho$ is then $\omega_0$-psh on $X$.
As already mentioned, we will assume, without loss of generality, that $\int_{X_y} \omega_0^{n-\kappa}=1$. 

\begin{lem} On $X \setminus D$, we have:
$$
(\chi +dd^c \f_{\infty})^{\kappa} \wedge \omega_{SF}^{n-\kappa}=e^{\f_{\infty}} v(h).
$$
\end{lem}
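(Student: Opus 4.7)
The strategy I would pursue is a purely local computation on $X\sm D$ that leverages two structural facts: the canonical current is pulled back from the base via $j$, while the semi-flat current restricts to a Ricci-flat metric on each smooth fiber. Both sides of the claimed identity are smooth $(n,n)$-forms on $X\sm D$, so it suffices to check the equality pointwise in adapted local coordinates $(y,z)$ on the smooth locus, with $y=(y_1,\ldots,y_\kappa)$ a coordinate on $J$ and $z=(z_1,\ldots,z_{n-\kappa})$ parameterizing the fiber.

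The first step is a bidegree observation: since $T_{can}^\kappa=j^*\theta_{can}^\kappa$ has maximal horizontal type, writing $T_{can}^\kappa=F(y)\prod_i i\,dy_i\we d\bar y_i$, any $dy$ or $d\bar y$ contribution from $\omega_{SF}^{n-\kappa}$ would kill the wedge. Expanding $\omega_{SF}^{n-\kappa}$ in the decomposition $\omega_{SF}=\omega_{yy}+\omega_{yz}+\omega_{zz}$ and counting $y,\bar y$ occurrences, only the purely vertical term survives, yielding
$$T_{can}^\kappa\we\omega_{SF}^{n-\kappa}=T_{can}^\kappa\we(\omega_{SF}|_{X_y})^{n-\kappa}.$$

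Next I would use the Ricci-flat nature of $\omega_{SF}|_{X_y}$. Since $K_{X_y}$ is numerically trivial on the smooth fiber and $v(h)|_{X_y}$ is a smooth positive volume form on $X_y$, Ricci flatness translates into $dd^c\log\bigl((\omega_{SF}|_{X_y})^{n-\kappa}/v(h)|_{X_y}\bigr)=0$ on $X_y$. Because $X_y$ is smooth, compact and connected (it sits over $J\sm\mrm{Crit}(j)$), this pluriharmonic ratio is a constant $c(y)$; integrating both sides over $X_y$ and using the normalization $\int_{X_y}\omega_0^{n-\kappa}=1$ produces $c(y)=V(y)^{-1}$, where $V(y):=\int_{X_y}v(h)|_{X_y}$. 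Hence $(\omega_{SF}|_{X_y})^{n-\kappa}=V(y)^{-1}v(h)|_{X_y}$.

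Finally I would plug in the Monge-Amp\`ere equation defining $\theta_{can}$. With $v(h)=g(y,z)\,\beta$ for $\beta=\prod_i i\,dy_i\we d\bar y_i\prod_j i\,dz_j\we d\bar z_j$, one has $v(h)|_{X_y}=g(y,z)\prod_j i\,dz_j\we d\bar z_j$, and by Fubini $w(h)=j_*v(h)=V(y)\prod_i i\,dy_i\we d\bar y_i$. The Monge-Amp\`ere identity $(\chi+dd^c\psi)^\kappa=e^\psi w(h)$ with $\psi\circ j=\f_\infty$ gives $F(y)=e^{\f_\infty(y)}V(y)$. Combining,
$$T_{can}^\kappa\we\omega_{SF}^{n-\kappa}=\frac{F(y)}{V(y)}\,g(y,z)\,\beta=e^{\f_\infty}\,v(h).$$
The only delicate point is the bidegree reduction in the first step; the mixed $\omega_{yz}$ terms in $\omega_{SF}$ could a priori contribute, but any such contribution carries a leftover $dy\we d\bar y$ factor which annihilates against the $\kappa$ $dy\we d\bar y$ factors already saturating $T_{can}^\kappa$. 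Everything else is a straightforward bookkeeping with the MA equation on $J$ and the fiberwise Ricci-flat normalization.
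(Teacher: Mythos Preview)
Your argument is correct and follows essentially the same route as the paper: both exploit the fiberwise Ricci-flatness of $\omega_{SF}$ and the Monge--Amp\`ere equation defining $\f_\infty$ on $J$, then identify the two sides via fiber integration; the paper phrases this invariantly (both sides are hermitian metrics on $K_X$ with vanishing fiberwise Ricci curvature, hence differ by a function pulled back from $J$, which the pushforward then pins down), whereas you carry out the same computation explicitly in adapted local coordinates. One small correction: in your second step, the pluriharmonicity of $\log\bigl((\omega_{SF}|_{X_y})^{n-\kappa}/v(h)|_{X_y}\bigr)$ on $X_y$ should be justified by $\mathrm{Ric}(v(h))=-\chi$ together with $\chi|_{X_y}=0$, not merely by $K_{X_y}$ being numerically trivial (an arbitrary smooth volume form on a Calabi--Yau fiber need not be Ricci-flat).
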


\begin{proof} 
 If $\gamma$ is a local section of $j_*\omega_{X|J}^{\otimes N}$ defined over a Zariski open subset $J^0\subset J$ over which 
$\f_{\infty}$ is smooth and
$\eta_N:j^*j_*\omega_{X/J}^{\otimes N} \to \omega_{X/J}^{\otimes N}$ is the natural sheaf morphism -which is an isomorphism-, 
we have
$$
\omega_{SF}^{n-\kappa}|_{X_y} =\frac{|\eta_N(\gamma)|^{2/N}}{\|\gamma^{1/N}\|^2_{Hodge}}
$$ 
on $j^{-1}(J^0)$. 

The left hand side can be rewritten as 
$
(\chi +dd^c \f_{\infty})^{\kappa}\frac{|\eta_N(\gamma)|^{2/N}}{\|\gamma^{1/N}\|^2_{Hodge}}
$
 and it is well-defined as a  smooth hermitian metric of the $\Q$-line bundle ${\mathcal O}_{j^{-1}(J^0)}(K_X)$. 
Its Ricci curvature is zero along the fibers 
of $j$. The same being true for the right hand side,  there exists a function $c:J^0 \to \R$ such that 
$$
(\chi +dd^c \f_{\infty})^{\kappa} \wedge \omega_{SF}^{n-\kappa}=e^c.e^{\f_{\infty}} v(h).
$$
 Taking the integral along the fiber of $j$ we obtain $(\chi +dd^c \f_{\infty})^{\kappa}=e^{c}e^{\f_{\infty}}v(h)$ hence $c=0$ 
by Definition \ref{stcurrent}. 
\end{proof}

\subsubsection{The plan}
The idea of the proof of  { Theorem D}  is standard.
We would like to construct a subsolution $u(t,x)$ and a supersolution $v(t,x)$ of the
flow (\ref{eq:flotfinal}) such that  for all $x \in X^{reg}$,
$$
\f_{\infty}(x) \leq \lim_{t \rightarrow +\infty} u(t,x)
$$
and
$$
\f_{\infty}(x) \geq \lim_{t \rightarrow +\infty} v(t,x).
$$
 It would then follow from the comparison principle Theorem~\ref{thm:pcpabord}  that for all $x \in X^{reg}$,
$$
\f_{\infty}(x) = \lim_{t \rightarrow +\infty} \f(t,x).
$$

Unfortunately, we  will have to slightly modify this strategy since we  can only construct approximate sub/super-solutions  
in a large subdomain of $X^{reg}$. This is enough thanks to Theorem~\ref{thm:pcpabord}.

\subsection{Uniform bounds}

\subsubsection{Uniform upper-bound}

We first observe that the solution $\f(t,x)$ of the flow is uniformly bounded from above
on $\R^+ \times X$:

\begin{lem} \label{lem:unifbdd}
There exists $C>0$ such that for all $t>0$ and $x \in X$,
$$
\f(t,x) \leq C.
$$
\end{lem}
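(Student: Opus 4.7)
The plan is to combine an integrated energy bound, obtained by testing equation~(\ref{eq:flotfinal}) against $v(h)$, with a pluripotential-theoretic sup-vs-integral comparison in a fixed K\"ahler class. Integrating (\ref{eq:flotfinal}) over $X$, using Stokes' theorem (in the pluripotential sense) to get $\int_X (\theta_t+dd^c\f_t)^n = \int_X \theta_t^n$, yields
\[
\int_X e^{\partial_t \f_t+\f_t}\, v(h) \;=\; \frac{\int_X \theta_t^n}{C_n^\kappa\, e^{-(n-\kappa)t}}.
\]
The binomial expansion of $\theta_t = e^{-t}\omega_0 + (1-e^{-t})\chi$ combined with $\chi^{\kappa+1}=0$ shows that every non-zero term of $\theta_t^n$ carries a factor $e^{-kt}$ for some $k \geq n-\kappa$, so the right-hand side is uniformly bounded by a constant $M_1$ (using the normalization $\int_X \omega_0^{n-\kappa}\wedge\chi^\kappa = 1$). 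Jensen's inequality applied to the probability measure $v(h)$ then gives $b'(t)+b(t) \leq \log M_1$ with $b(t):=\int_X \f_t \, v(h)$, whence $b(t)\leq \max\{b(0),\log M_1\}$ by Gronwall.

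For the sup bound, observe that $\theta_t \leq \omega_0+\chi$ for all $t\geq 0$, so each $\f_t$ is $(\omega_0+\chi)$-\psh in the fixed K\"ahler class $\{\omega_0+\chi\}$. Since $v(h)$ has $L^p$ density with respect to a smooth volume form for some $p>1$ (from its construction together with the canonical singularity hypothesis on $X$, via a local computation analogous to the proof of Lemma~\ref{pushforward1}), the standard $L^1$-compactness of normalized $\omega$-\psh functions combined with H\"older's inequality produces the Hartogs-type estimate
\[
\sup_X \f_t \;\leq\; \int_X \f_t\, v(h) + C_2
\]
with a uniform constant $C_2$. Combining the two steps gives the desired uniform bound $\sup_X \f_t \leq \max\{b(0),\log M_1\} + C_2$.

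The main technical point is verifying in the second step that $v(h)$ is sufficiently regular (non-pluripolar with $L^p$ density for some $p>1$) to serve as reference measure in the Hartogs estimate. When $X$ is smooth this is immediate since $v(h)$ is a smooth positive volume form; the canonical singularity case rests on a local pluripotential analysis of the density of $v(h)$ on a log-resolution, in the spirit of Lemma~\ref{pushforward1}.
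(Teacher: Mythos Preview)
Your argument is correct and coincides almost verbatim with the paper's \emph{second} proof of this lemma: integrate the flow equation against $v(h)$, bound $\int_X \theta_t^n / (C_n^\kappa e^{-(n-\kappa)t})$ uniformly via the binomial expansion and $\chi^{\kappa+1}=0$, apply Jensen to get a differential inequality for $b(t)=\int_X \f_t\,v(h)$, and conclude with the sup/mean comparison for $(\omega_0+\chi)$-psh functions (the paper cites \cite{GZ05} for this last step, using only that $PSH(X,\omega_0+\chi)\subset L^1(v(h))$ when $X$ is log terminal).

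The paper's \emph{primary} proof is different and more elementary: it checks directly that the constant function $v(t,x)=C$ with $C$ large is a viscosity supersolution of (\ref{eq:flotfinal}) (since $(\theta_t)_+^n \leq C_n^\kappa e^{-(n-\kappa)t} e^A v(h)$ for some $A$), and then invokes the comparison principle. This avoids any integral estimate or regularity discussion of $v(h)$, but requires the comparison machinery of Theorem~\ref{thm:pcpabord}. Your approach, by contrast, works already for log terminal singularities and does not need the viscosity framework.

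One small point you gloss over, which the paper flags explicitly: differentiating $b(t)$ presupposes that $t\mapsto \f_t$ is Lipschitz. The paper handles this by first replacing $\f$ with its sup-convolution in time $\f^\e$ (a subsolution of an approximate flow) and passing to the limit.
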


\begin{proof}
Set $v(t,x)=C$, where $C>\sup_X \f_0$. Then $v(0,x) \geq \f_0(x)$
and 
$$
(\theta_t+dd^c v_t)_+^n \leq (\chi+e^{-t} \omega_0)^n
=\sum_{j=0}^{\kappa} C_n^j \chi^j \wedge \omega_0^{n-j} e^{-(n-j)t}.
$$

Each term $e^{-(n-j)t}$, $0 \leq j \leq \kappa$, is bounded from above by 
$e^{-(n-\kappa)t}$, while the terms $\chi^j \wedge \omega_0^{n-j}$ are bounded
from above by $C' \omega_0^n \leq C'' v(h)$, as explained in the proof of \cite[Lemma 6.4]{EGZ09}.
We infer
$$
\frac{(\theta_t+dd^c v_t)_+^n}{C_n^k e^{-(n-\kappa)t}}  \leq  
e^{A} v(h) =     e^{\partial_t{v}+v-C} e^{A} v(h)  \leq e^{\partial_t{v}+v} v(h) 
$$
if $C \geq A$. 

It follows therefore from Lemma \ref{lem:pluripotvisc} that $v$ is a global supersolution of the Cauchy problem for (\ref{eq:flotfinal}).
The comparison principle Theorem~\ref{thm:pcpabord} thus yields the uniform upper-bound
$
\f(t,x) \leq v(t,x)=C.
$

\smallskip

We also note the following alternative proof of independent interest, as it only requires 
$X$ to have log terminal singularities. Set
$$
I(t):=\int_X \f_t \, v(h).
$$

The function $\f_t$ is $\theta_t$-psh, hence $(\omega_0+\chi)$-psh
since $0 \leq \theta_t \leq \omega_0+\chi$ for  $t \geq 0$.
Since the singularities of $X$ are log terminal we have $PSH(X,\omega_0+\chi) \subset L^1(v(h))$, 
hence it follows from \cite[Proposition 1.7]{GZ05} that 
$$
\sup_X \f_t \leq I(t)+C,
$$
with $C$ independent of $t$. Jensen's inequality yields
\begin{eqnarray*}
I'(t)+I(t)=\int_X (\partial_t \f_t+\f_t) v(h) &\leq &\log \int_X e^{\partial_t \f_t+\f_t} v(h) \\
&=&\log \left( \frac{\int_X \theta_t^n}{C_n^{\kappa} e^{-(n-\kappa)t}} \right) \leq B.
\end{eqnarray*}
We infer $I(t) \leq I(0)+B$ hence $\f(t,x) \leq I(0)+B+C$.

This argument only requires that $\f$ is a subsolution, but we have implicitly used that $t \mapsto \f_t$ is Lipschitz
to differentiate $I$. To circumvent this difficulty, we can first replace $\f$ by its supconvolution in time $\f^\e$
(which is a subsolution of an approximate flow)
and then proceed as above.
\end{proof}

\subsubsection{Uniform lower-bound}

A first order argument is required to establish a uniform lower bound on the solution $\f(t,x)$:

\begin{lem} \label{lem:unifbdd2} 
There exists $C>0$ such that for all $t>0$ and $x \in X$,
$$
\f(t,x) \geq -C.
$$
\end{lem}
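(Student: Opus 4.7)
The strategy mirrors the construction used for the upper bound in Lemma~\ref{lem:unifbdd}: we shall exhibit a viscosity subsolution $u(t,x)$ of the flow~(\ref{eq:flotfinal}) that is bounded uniformly from below on $[0,+\infty) \times X$, and then appeal to the comparison principle Theorem~\ref{thm:pcpabord} to deduce $\varphi(t,x) \geq u(t,x) \geq -C$.

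The natural candidate is the interpolation
\[
u(t,x) := e^{-t}\bigl(\varphi_0(x) + \rho_\varepsilon(x)\bigr) + (1-e^{-t})\, \varphi_\infty(x) - B,
\]
where $\rho_\varepsilon$ is a bounded smooth regularization of the semiflat potential $\rho$ (so that $\omega_0 + dd^c\rho_\varepsilon$ is a smooth form approximating $\omega_{SF}$ on $X \setminus D$), and $B>0$ is a large constant. Two features make this ansatz attractive. First,
\[
\theta_t + dd^c u_t \;=\; e^{-t}\bigl(\omega_0 + dd^c\varphi_0 + dd^c\rho_\varepsilon\bigr) + (1-e^{-t})\,\theta_{can}
\]
is manifestly a convex combination of nonnegative currents (using $\theta_{can} \geq 0$ and that $\omega_{SF}\geq 0$ by Choi's theorem together with Conjecture~C). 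Second, the zeroth-order term is independent of $B$ in an instructive way: $\partial_t u + u = \varphi_\infty + e^{-t}\cdot(\text{bounded}) - B$, so the time derivative is absorbed by the potential.

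To verify the subsolution inequality $(\theta_t + dd^c u_t)^n \geq C_n^\kappa e^{-(n-\kappa)t}\,e^{\partial_t u + u}\,v(h)$, one expands binomially and uses $\theta_{can}^{\kappa+1}=0$ (since $\theta_{can}$ is a pullback from a $\kappa$-dimensional base). Only powers $0\leq j \leq \kappa$ of $\theta_{can}$ survive, and the dominant term as $t\to\infty$ is
\[
\binom{n}{\kappa} e^{-(n-\kappa)t}(1-e^{-t})^\kappa (\omega_0 + dd^c\varphi_0 + dd^c\rho_\varepsilon)^{n-\kappa}\wedge \theta_{can}^\kappa,
\]
which, by the fundamental identity $\omega_{SF}^{n-\kappa}\wedge\theta_{can}^\kappa = e^{\varphi_\infty}v(h)$ on $X\setminus D$ (letting $\varepsilon\to 0$) matches the right-hand side up to the factor $(1-e^{-t})^\kappa e^{-B}$. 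Choosing $B$ large guarantees the inequality for $t$ bounded away from $0$; the subdominant terms are nonnegative and so do not hurt. For small $t$, where $(1-e^{-t})^\kappa$ degenerates, the form $\theta_t + dd^c u_t$ stays close to the K\"ahler form $\omega_0+dd^c\varphi_0+dd^c\rho_\varepsilon$, and standard comparison with $v(h)$ (a bounded measure on $X$) closes the estimate after possibly enlarging $B$. At the initial time, $u(0,\cdot) = \varphi_0 + \rho_\varepsilon - B \leq \varphi_0$ for $B$ large, so the parabolic boundary condition of Theorem~\ref{thm:pcpabord} is satisfied.

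The main obstacle is that $\omega_{SF}$, hence $\rho$, is only regular on the Zariski open set $X\setminus D$, whereas the comparison must hold on all of $X$. This is precisely the type of localization difficulty anticipated in the remark following Theorem~E and in the paragraph preceding Section~4.2: we apply Theorem~\ref{thm:pcpabord} on subdomains exhausting the ample locus of $\{\theta\}$, use the regularization $\rho_\varepsilon$ (controlled via Demailly's approximation) to keep $u$ uniformly bounded below independently of $\varepsilon$, and pass to the limit. This is where the ``first order'' (i.e., $\partial_t$) information becomes essential: the time derivative $\partial_t u$ is uniformly bounded on $[0,+\infty)\times X$, and the regularity of $t\mapsto \omega_t$ in the sense of Definition~\ref{def:regular} allows us to invoke Theorem~\ref{thm:pcpabord} without additional smoothness on $u$. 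Once the comparison yields $\varphi \geq u$ on the ample locus, the lower bound extends to all of $X$ by upper semi-continuity together with Lemma~\ref{lem:unifbdd}.
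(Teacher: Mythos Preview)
Your approach runs into a circularity that cannot be repaired by regularization or localization. The subsolution inequality you need rests on the identity $\omega_{SF}^{n-\kappa}\wedge\theta_{can}^\kappa = e^{\varphi_\infty}v(h)$, which holds only on $X\setminus D$; near $D$ the left-hand side degenerates while $e^{\varphi_\infty}v(h)$ does not, so your $u$ is at best a subsolution on a subdomain $\Omega_r$. Applying Theorem~\ref{thm:pcpabord} on $[T_0,\infty)\times\Omega_r$ then forces you to verify $u\leq\varphi$ on the lateral boundary $[T_0,\infty)\times\partial\Omega_r$. Since your $u$ is bounded there (the terms are bounded, or at worst of order $\varepsilon\log r$ with $r$ fixed), this amounts to a uniform-in-$t$ lower bound on $\varphi$ along $\partial\Omega_r$---exactly the statement you are trying to prove. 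Demailly regularization of $\rho$ does not break the circle: it makes $u$ globally defined and bounded, but then the subsolution inequality fails near $D$ (where $\omega_{SF,\varepsilon}^{n-\kappa}\wedge\theta_{can}^\kappa$ is far from $e^{\varphi_\infty}v(h)$), and it does nothing to control $\varphi$ on $\partial\Omega_r$. Your last sentence also does not help: upper semi-continuity of $\varphi$ gives no information on lower bounds, and Lemma~\ref{lem:unifbdd} is an upper bound.

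The paper's proof is of a genuinely different nature, which it flags as a ``first order argument'': one pulls back to a resolution $Z$, runs the smooth approximating flows $\varphi^\varepsilon$, and applies the parabolic maximum principle to the auxiliary quantity $H_\varepsilon=(e^t-1)\,\partial_t\varphi^\varepsilon_t-\varphi^\varepsilon_t-h(t)$ for a suitable $h$, obtaining $\partial_t\varphi^\varepsilon\leq C_2$ uniformly for $t\geq 1$. This is then combined with the uniform oscillation estimate of \cite{EGZ08,DP10} (bounding $\sup_Z\varphi^\varepsilon_t-\inf_Z\varphi^\varepsilon_t$ independently of $t,\varepsilon$) and an elementary volume argument to bound $\sup_Z\varphi^\varepsilon_t$ from below. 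The point is that the uniform lower bound genuinely requires control of $\partial_t\varphi$; a purely zeroth-order subsolution construction of the type you propose is precisely what is carried out later in Propositions~\ref{pro:subsol} and~\ref{pro:subsol2}, but there it is used only to capture the \emph{asymptotic} lower bound $\liminf_{t\to\infty}\varphi_t\geq\varphi_\infty$, and it explicitly takes Lemma~\ref{lem:unifbdd2} as input to handle the lateral boundary condition.
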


\begin{proof} The proof uses basically the same idea as the analogous result in \cite{ST12}. 
We let $\pi: Z \rightarrow X$ denote a desingularization of $X$ and pull-back the flow to $Z$,
obtaining
$$
\frac{(\pi^*\theta_t+dd^c \f_t \circ \pi)^n}{C_n^{\kappa} e^{-(n-\kappa)t}}=  e^{\partial_t \f \circ \pi +\f_t \circ \pi} \pi^*v(h).
$$

We let $\omega_Z$ denote a K\"ahler form on $Z$. 
The measure $\pi^* v(h)$ can be written as
$F \omega_Z^n$, where $F \geq 0$ is a smooth function (here we use the hypothesis that $X$ has only canonical singularities).
We let $\f^{\e}$ denote the approximating flows, solutions of the smooth parabolic approximating flows,
$$
\frac{(\pi^*\theta_t+\e \omega_Z+dd^c \f_t^{\e})^n}{C_n^{\kappa} e^{-(n-\kappa)t}}=  e^{\partial_t \f^\e   +\f_t^\e} (F+\e) \omega_Z^n,
$$
with smooth initial data $\f_0^\e$, which decreases uniformly towards $\f_0$.

It follows from standard viscosity theory \cite[section 6]{CIL92}
that the functions $\f^\e$ are $(\omega_0+\e \omega_Z)$-psh and  uniformly converge to $\f \circ \pi$, as $\e \rightarrow 0^+$.
%We can also use an approximation with two parameters,
%$$
%\frac{(\pi^*\theta_t+\e \omega_Z+dd^c \f_t^{\e,\delta})^n}{C_n^{\kappa} e^{-(n-\kappa)t}}=  
%e^{\partial_t \f^{\e,\delta}  +\f_t^{\e,\delta}} (F+\delta) \omega_Z^n,
%$$
%and observe that $\e \mapsto \f_t^{\e,\delta}$ is increasing, while $\delta \mapsto \f_t^{\e,\delta}$ is decreasing.

It therefore suffices to establish a uniform bound from below for $\f^\e$, which is independent of $\e$. We set
$$
H_\e(t,x)=(e^t-1) \partial_t \f_t^\e(x)-\f_t^\e(x)-h(t),
$$
where $h$ will be chosen below. We let the reader 
check that
\begin{eqnarray*}
\left( \frac{\partial}{\partial t}-\Delta_t \right)(H_\e)
&=& n-h'(t)-(e^t-1) C'(t)-\rm{Tr}_t(\pi^* \omega_0+\e \omega_Z) \\
& \leq & n-h'(t)-(e^t-1) C'(t),
\end{eqnarray*}
where $C(t)=(n-\kappa)t-\log C_n^{\kappa}$ and $\Delta_t$, $\rm{Tr}_t$ denote the
Laplacian and Trace operators with respect to the K\"ahler form $\pi^*\theta_t+\e \omega_Z+dd^c \f_t^{\e}$.
 We thus choose 
$$
h(t)=(1+\kappa) t +(n-\kappa) e^{t}, 
$$
so that $\left( \frac{\partial}{\partial t}-\Delta_t \right)(H_\e) \leq -1<0$.

It follows that $H_\e$ attains its maximum along $(t=0)$. Lemma \ref{lem:unifbdd}  yields
$$
(e^t-1) \partial_t \f_t^\e(x) \leq C_1+(n-\kappa) e^t+(1+\kappa) t
$$
hence for all $t \geq 1$ and $x \in X$,
$$
\partial_t \f_t^\e(x) \leq  C_2.
$$
Lemma \ref{lem:unifbdd} again and the main result of \cite{EGZ08,DP10} insure that
$$
0 \leq \sup_Z \f_t^\e -\inf_Z \f_t^\e \leq C_3,
$$
with $C_3$ independent of $t,\e$.
 
 It remains to uniformly bound from below $\sup_Z \f_t^\e$.
 Observe that for $0 < \e<1$,
 $$
1 \leq \int_Z \frac{ (\pi^* \theta_t +\e \omega_Z)^n}{C_n^{\kappa} e^{-(n-\kappa)t}} =\int_Z e^{\partial_t \f^\e+\f_t^\e}  (F+\e) \omega_Z^n
\leq \int_Z e^{C_2+\f_t^\e}  (F+1) \omega_Z^n.
 $$
 
 Setting $\mu=e^{C_2}(F+1) \omega_Z^n$, we infer
 $$
 1 \leq \int_Z e^{\f_t^\e}  d \mu \leq \mu(Z) e^{\sup_Z \f_t^\e},
 $$
 hence $\sup_Z \f_t^\e$ is uniformly bounded below and the proof is complete.
\end{proof}

\subsection{Construction of subsolutions} \label{sec:subsol}

\subsubsection{Optimal lower bound}

Recall that the current $\omega_{SF}=\omega_0+dd^c \rho$ 
is such that  $\omega_y={\omega_0}_{|X_y}+dd^c \rho_{|X_y}$ is the unique 
Ricci-flat current on $X_y$ which is cohomologous to ${\omega_0}_{|X_y}$.
The potential $\rho_{|X_y}$ is  normalized so that $\int_{X_y} \rho_{|X_y} \, {\omega_0}_{|X_y}^{n-\kappa}=0$.
%When $X$ is smooth $x \mapsto  \rho(x)$ is $\omega_0$-psh away from the singular fibers, as recently shown by Choi \cite{Choi15}.

We assume in this section that $\rho$ is $\omega_0$-psh and obtain an efficient lower bound for $\f$:

\begin{prop} \label{pro:subsol}
Assume $\rho$ is $\omega_0$-psh.
Fix $C \geq \sup_X (\rho-\f_0)$. The function
$$
(t,x) \in \R^+ \times X \mapsto u(t,x):=(1-e^{-t}) \f_{\infty}(x)+e^{-t} \rho(x)-Ce^{-t}+h(t) \in \R
$$
is a subsolution to the Cauchy problem for (\ref{eq:flotfinal}).
\end{prop}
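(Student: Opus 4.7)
The plan is to verify the two requirements for $u$ to be a viscosity subsolution of the Cauchy problem for (\ref{eq:flotfinal}): the initial inequality $u(0,\cdot)\le\f_0$, and the parabolic subsolution inequality on $\R^+\times X$. The initial condition will be automatic from the definition of $C$ provided we arrange $h(0)=0$, since then $u(0,x)=\rho(x)-C\le\f_0(x)$ by the choice of $C$.

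The key geometric observation is that $u$ is designed precisely so that the curvature form is a convex combination of the two canonical positive currents:
$$
\theta_t+dd^c u_t \;=\; e^{-t}(\omega_0+dd^c\rho)+(1-e^{-t})(\chi+dd^c\f_\infty) \;=\; e^{-t}\omega_{SF}+(1-e^{-t})T_{can}.
$$
Since $\rho$ is $\omega_0$-psh by hypothesis and $\f_\infty$ is $\chi$-psh, both summands are positive; in particular $u_t$ is $\theta_t$-psh. I would then expand the $n$-th power by the binomial formula. Using that $T_{can}=j^*\theta_{can}$ pulls back from a $\kappa$-dimensional variety and therefore $T_{can}^{\kappa+1}=0$, and that all remaining terms are nonnegative currents, only the level $j=\kappa$ summand needs to be retained to produce
$$
(\theta_t+dd^c u_t)^n \;\ge\; \binom{n}{\kappa}(1-e^{-t})^\kappa e^{-(n-\kappa)t}\, T_{can}^\kappa\wedge\omega_{SF}^{n-\kappa}.
$$
The preceding lemma identifies this last wedge product with $e^{\f_\infty}v(h)$ on $X\setminus D$, while a direct time derivative yields the clean identity $u_t+\partial_t u=\f_\infty+h(t)+h'(t)$ (the $\rho$- and $C$-contributions cancel out). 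Plugging these in, the required subsolution inequality reduces to the scalar ODE inequality $h(t)+h'(t)\le\kappa\log(1-e^{-t})$. I would solve this with equality under the initial condition $h(0)=0$, giving the explicit
$$
h(t) \;=\; \kappa\, e^{-t}\int_0^t e^s\log(1-e^{-s})\,ds.
$$
The integrability of $\log s$ at zero makes $h$ well-defined and continuous at $t=0$, and the asymptotics $\log(1-e^{-s})\sim-e^{-s}$ at infinity force $h(t)\to 0$ as $t\to+\infty$, a property not needed in the present statement but crucial when $u$ is later used to recover $\f_\infty$ as the long-time limit.

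The main technicality I anticipate concerns the viscosity interpretation. The identity for $T_{can}^\kappa\wedge\omega_{SF}^{n-\kappa}$ is only proved on the Zariski open set $X\setminus D$, and the derivative $\partial_t u$ blows down to $-\infty$ as $t\to 0^+$ through the $h'$ term. I would dispose of both issues by invoking Lemma \ref{lem:pluripotvisc}: since $T_{can}$ and $\omega_{SF}$ have continuous potentials their Bedford-Taylor wedge product is a well-defined Borel measure placing no mass on the pluripolar divisor $D$, so the identity extends as a global identity of measures; and the Cauchy problem for a viscosity subsolution only requires continuity of $u$ up to $t=0$, not of $\partial_t u$, so the $t\to 0^+$ singularity of $h'$ is harmless.
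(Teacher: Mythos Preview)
Your proof is correct and follows essentially the same route as the paper: check the initial inequality via $h(0)=0$ and the choice of $C$; write $\theta_t+dd^c u_t=(1-e^{-t})(\chi+dd^c\f_\infty)+e^{-t}\omega_{SF}$; keep only the $j=\kappa$ term in the binomial expansion; invoke the identity $(\chi+dd^c\f_\infty)^\kappa\wedge\omega_{SF}^{n-\kappa}=e^{\f_\infty}v(h)$; reduce to the ODE $h'+h=\kappa\log(1-e^{-t})$; and conclude via Lemma~\ref{lem:pluripotvisc}. The paper's proof is terser but structurally identical, and your extra remarks on the explicit formula for $h$, the extension of the wedge-product identity across $D$, and the harmlessness of the $h'(t)\to-\infty$ singularity at $t=0$ are all to the point (one small caveat: the hypothesis is only that $\rho$ is $\omega_0$-psh, not that it is continuous, so your phrase ``$\omega_{SF}$ has continuous potentials'' slightly overstates what is assumed, though the pluripotential inequality you need still goes through).
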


Here $h$ denotes the solution of the ordinary differential equation (ODE) (\ref{eq:subsol2}) below.

\begin{proof}
 Observe first that $u(0,x)=\rho(x)-C+h(0)=\rho(x)-C  \leq \f_0$,
by our choice of $C$.
We now check that $u$ is a subsolution to the equation, i.e. satisfies
$$
(\theta_t+dd^c u_t)^n \geq C_n^{\kappa} e^{-(n-\kappa)t} e^{\partial_t u+u_t} v(h)
$$
in the viscosity sense. Recall that
$(\chi +dd^c \f_{\infty})^{\kappa} \wedge \omega_{SF}^{n-\kappa}=e^{\f_{\infty}} v(h)$. Since
$\theta_t=(1-e^{-t}) \chi+e^{-t} \omega_0$,
we note that $u_t$ is $\theta_t$-psh with
\begin{eqnarray*}
(\theta_t+dd^c u_t)^n &\geq&  C_n^{\kappa} e^{-(n-\kappa)t} \left(1-e^{-t} \right)^{\kappa} 
(\chi +dd^c \f_{\infty})^{\kappa} \wedge \omega_{SF}^{n-\kappa} \\
&=& C_n^{\kappa} e^{-(n-\kappa)t} e^{\partial_t u+u_t} v(h)
\end{eqnarray*}
if the function $h$ solves solves
$$
h'(t)+h(t)=\kappa \ln \left(1-e^{-t} \right),
$$
with  $h(0)=0$.
The conclusion follows from Lemma \ref{lem:pluripotvisc}.
\end{proof}

We let the reader verify the following elementary result:

\begin{lem} \label{lem:subsol}
Let $h:\R^+ \rightarrow \R^+$ be the smooth solution of the ODE
\begin{eqnarray} \label{eq:subsol2}
h'(t)+h(t)=\kappa \ln(1-e^{-t})
\; \; \text{ with } \; \;
h(0)=0.
\end{eqnarray}
There exists $C>0$ such that for all $t\geq 0$,
$-C (t+1) e^{-t} \leq h(t) \leq 0$.
\end{lem}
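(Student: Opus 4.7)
The ODE is a first order linear equation with integrating factor $e^{t}$, so the unique solution with $h(0)=0$ is given explicitly by
\[
h(t) = \kappa\, e^{-t} \int_0^t e^s \ln(1-e^{-s})\, ds.
\]
This is the starting point of the entire analysis; all that remains is to bound the integral from above by $0$ and from below by a multiple of $-(t+1)$.

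For the upper bound $h(t) \leq 0$, I would simply observe that for every $s>0$ we have $1-e^{-s}\in(0,1)$, hence $\ln(1-e^{-s}) \leq 0$. Since $\kappa>0$ and $e^s>0$, the integrand is nonpositive, which gives $h(t)\leq 0$ immediately.

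The substantive part is the lower bound. I would split the integral at $s=1$ and estimate each piece separately. Near $s=0$, the Taylor expansion $1-e^{-s} = s - s^2/2 + O(s^3)$ gives $\ln(1-e^{-s}) = \ln s + O(s)$, so $e^s \ln(1-e^{-s}) = \ln s + O(1)$, which is absolutely integrable on $[0,1]$; thus there is a constant $C_1$ with $\int_0^1 e^s |\ln(1-e^{-s})|\,ds \leq C_1$. For $s\geq 1$, the expansion $\ln(1-e^{-s}) = -e^{-s} - \tfrac{1}{2}e^{-2s} - \cdots$ yields $e^s \ln(1-e^{-s}) \to -1$ as $s\to\infty$, and in fact $|e^s\ln(1-e^{-s})| \leq C_2$ uniformly on $[1,\infty)$. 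Therefore
\[
\int_0^t e^s |\ln(1-e^{-s})|\,ds \;\leq\; C_1 + C_2(t-1)_+ \;\leq\; C_3(t+1)
\]
for all $t\geq 0$, and multiplying by $\kappa e^{-t}$ gives $h(t)\geq -\kappa C_3 (t+1)e^{-t}$, completing the proof.

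There is no real obstacle: the only thing to take care of is the mildly singular behavior of $\ln(1-e^{-s})$ at $s=0$ (handled by $\ln s$ integrability) and the fact that $e^s\ln(1-e^{-s})$ does not decay but stabilizes near $-1$ at infinity (which is precisely what produces the linear factor $(t+1)$ in the final bound rather than a pure exponential).
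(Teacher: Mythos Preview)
Your proof is correct. The paper does not actually supply a proof of this lemma --- it simply writes ``We let the reader verify the following elementary result'' --- so there is nothing to compare against; your argument via the explicit integrating-factor formula and the split estimate at $s=1$ is exactly the natural verification the authors had in mind.
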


When $\rho$ is $\omega_0$-psh, we therefore have a precise asymptotic bound from below for
$\f$, with exponential speed:

\begin{coro}
If $\rho$ is $\omega_0$-psh, then for all $t,x$,
$$
-C'(t+1)e^{-t}+e^{-t} \rho(x) \leq \f(t,x)-\f_{\infty}(x).
$$
\end{coro}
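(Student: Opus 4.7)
The plan is to invoke the subsolution construction of Proposition \ref{pro:subsol} and then compare with the actual solution $\f$ via the global comparison principle (Theorem \ref{thm:pcpabord}).

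First, fix $C \geq \sup_X (\rho - \f_0)$ as in Proposition \ref{pro:subsol}, and let $u(t,x) = (1-e^{-t})\f_\infty(x) + e^{-t}\rho(x) - Ce^{-t} + h(t)$ be the subsolution to the Cauchy problem for \eqref{eq:flotfinal} produced there, where $h$ solves the ODE \eqref{eq:subsol2}. Apply Theorem \ref{thm:pcpabord} (on $X^{\mathrm{reg}}$, which lies in the ample locus of $\chi$; note the family $\theta_t$ is regular in the sense of Definition \ref{def:regular}) to the subsolution $u$ and the solution $\f$ with $u(0,\cdot) \leq \f_0$ on the parabolic boundary. This yields $u(t,x) \leq \f(t,x)$ for all $(t,x) \in \R^+ \times X^{\mathrm{reg}}$, and by continuity extends to all of $X$.

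Rearranging, one obtains
\begin{equation*}
\f(t,x) - \f_\infty(x) \,\geq\, e^{-t}\bigl(\rho(x) - \f_\infty(x)\bigr) - C e^{-t} + h(t).
\end{equation*}
Since $\f_\infty$ is continuous on the compact space $X$, it is bounded, so $-e^{-t}\f_\infty(x) \geq -C_1 e^{-t}$ for a constant $C_1$ depending only on $\sup_X |\f_\infty|$. Combining this with the lower bound $h(t) \geq -C_0(t+1)e^{-t}$ from Lemma \ref{lem:subsol}, we get
\begin{equation*}
\f(t,x) - \f_\infty(x) \,\geq\, e^{-t}\rho(x) - (C + C_1)e^{-t} - C_0(t+1)e^{-t} \,\geq\, e^{-t}\rho(x) - C'(t+1)e^{-t}
\end{equation*}
for a suitable constant $C'>0$, which is exactly the claimed inequality.

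There is no real obstacle: the content has been absorbed into Proposition \ref{pro:subsol} (construction of the subsolution) and Theorem \ref{thm:pcpabord} (comparison principle). The only mild point to verify is that the comparison principle applies in this global setting — which it does because $\theta_t$ dominates a semi-positive big form (namely $\chi$ for large $t$, and uniformly $e^{-T}\chi$ on any $[0,T]$), $X^{\mathrm{reg}}$ is contained in the ample locus, and the family $\theta_t$ is regular. The rest is bookkeeping on the elementary bound for $h$ and on the boundedness of $\f_\infty$.
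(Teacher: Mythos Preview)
Your argument is correct and matches the paper's intended approach: the corollary is stated without proof in the paper, as an immediate consequence of Proposition~\ref{pro:subsol}, Lemma~\ref{lem:subsol}, and the comparison principle Theorem~\ref{thm:pcpabord}. Your bookkeeping (absorbing $e^{-t}\f_\infty$ and $Ce^{-t}$ into the $(t+1)e^{-t}$ term) is exactly what is needed.

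One minor slip in your justification of the comparison principle: you write that $\theta_t$ dominates ``$\chi$ for large $t$, and uniformly $e^{-T}\chi$ on any $[0,T]$''. But $\chi=j^*\omega_J$ is \emph{not} big when $\kappa<n$ (its top self-intersection vanishes), so it cannot serve as the big lower bound $\theta$ required in section~\ref{sec:visc}. The correct observation is that on any finite interval $[0,T]$ one has $\theta_t \ge e^{-T}\omega_0$, and $\omega_0$ is K\"ahler hence big; one then applies Theorem~\ref{thm:pcpabord} on $[0,T[$ and lets $T\to+\infty$. This is what the paper has in mind when it asserts that the family $\theta_t$ ``satisfies all the requirements from section~\ref{sec:visc}''. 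With this correction your proof stands.
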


\subsubsection{Approximate subsolutions}

In this section we provide a construction of approximate subsolutions with good asymptotic behavior,
assuming $\rho$ is smooth in $X \setminus D$ (rather than $\omega_0$-psh),  where
$D=(s=0)=j^*D'$ is a divisor in $X$.

We fix $h$ a smooth hermitian metric 
of the line bundle $L_D=j^*L_{D'}$ 
$L_D$ and normalize $h$ so that $|s|_h \leq 1$ on $X$. 
We can assume without loss of generality (up to rescaling) that the curvature form $\Theta_h$ of $h$
 is dominated by $\chi$.
 The Poincar\'e-Lelong equation thus yields
\begin{equation}  
\chi+dd^c \log |s|_h \geq \Theta_h +dd^c \log |s|_h =[D] \geq 0,
\end{equation}
where $[D]$ denotes the current of integration along $D$.
We set
$$
V_{r}(D):=\{ x \in X \;  ;  \; |s(x)|_h < r \}
$$
for $r>0$ fixed, and $\Omega_r:=X^{reg} \setminus V_{r}(D)$.
We  assume that the semi-flat current $\omega_{SF}=\omega_0+dd^c \rho$
is smooth in $X \setminus D$, thus there exists $C_r>0$ such that 
$$
\omega_0+dd^c \rho \leq C_{r} \, \omega_0
\text{ in } X \setminus V_{r}(D).
$$

We can now proceed with the construction of a subsolution in $\Omega_r = X^{reg} \setminus V_{r}(D)$.
For technical reasons, we  provide a supersolution in
$[T_0,+\infty[ \times \Omega_r$, where the stopping time
$T_0$ depends on a parameter $\e>0$ and $r=r(\e)$ :

\begin{prop}  \label{pro:subsol2}
 Fix $\e>0$. 
 
 1) Fix $r=r_\e>0$ s.t.
{ $(1-\e) \sup_X \f_{\infty}+\frac{\e}{2} \log r_\e \leq \sup_{\R^+ \times X} \f(t,x)$.}

\smallskip

2) Fix $T_0 \geq 0$ so large that $e^{-T_0} \rho_{| \partial \Omega_r}+\frac{\e}{2} \log r \leq 0$.

\smallskip

3) Fix $C>1$ s.t. $(1-\e) \sup_X \f_{\infty}+e^{-T_0} \sup_{\Omega_r} \rho-Ce^{-T_0} \leq \f_{T_0}$.

\smallskip

\noindent Then  
$$
(t,x) \mapsto u_\e(t,x)=[1-e^{-t}-\e] \f_{\infty}(x)+e^{-t} \rho(x)+\e \log |s|_h(x)-C e^{-t}+h(t)
$$
is a subsolution to the Cauchy problem for (\ref{eq:flotfinal}) in $ M_r :=[T_0,+\infty[ \times \Omega_r$.

Here  $h:\R \rightarrow \R^+$ denotes unique the solution of the ODE
$$
h'(t)+h(t)=\ln \left[(1-e^{-t}-\e )^{\kappa}-e^{B-t} \right]
$$
with initial condition $h(0)=0$, where $B>0$ is specified in (\ref{eq:subgal}) below.
\end{prop}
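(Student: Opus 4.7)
The plan is to verify the two requirements for $u_\e$ to be a subsolution to the Cauchy problem on $M_r = [T_0,+\infty[ \times \Omega_r$: the pointwise differential inequality in the interior, and the boundary comparison $u_\e \leq \f$ on $\partial_P M_r$. Since $\rho$, $\f_\infty$ and $\log|s|_h$ are all smooth on $\Omega_r$, the function $u_\e$ is smooth there and, via Lemma~\ref{lem:pluripotvisc}, it suffices to check the classical pointwise inequality.

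First I would compute, using the identity $(1-e^{-t})\chi = (1-e^{-t}-\e)\chi + \e\chi$, the decomposition
\[
\theta_t + dd^c u_{\e,t} \; = \; (1-e^{-t}-\e)\,T_{can} \; + \; e^{-t}\,\omega_{SF} \; + \; \e\bigl(\chi + dd^c\log|s|_h\bigr).
\]
For $T_0$ large enough that $1-e^{-T_0}-\e \geq 0$, this exhibits the form as a sum of three semi-positive closed $(1,1)$-currents on $\Omega_r$: the first because $T_{can}\geq 0$; the second by the regularity hypothesis on $\omega_{SF}$ together with the fiberwise normalization of $\rho$; the third because on $\Omega_r \subset X \setminus D$ the Poincar\'e-Lelong formula reduces it to $\e(\chi - \Theta_h) \geq 0$. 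Retaining only the term $A^\kappa \wedge B^{n-\kappa}$ in the multinomial expansion of $(A+B+C)^n$ and invoking the identity $T_{can}^\kappa \wedge \omega_{SF}^{n-\kappa} = e^{\f_\infty}\,v(h)$ from the preceding lemma yields
\[
(\theta_t + dd^c u_{\e,t})^n \; \geq \; C_n^\kappa (1-e^{-t}-\e)^\kappa e^{-(n-\kappa)t}\, e^{\f_\infty}\, v(h).
\]
A direct calculation gives $\partial_t u_\e + u_\e = (1-\e)\f_\infty + \e\log|s|_h + h'(t) + h(t)$, so the subsolution inequality reduces pointwise to $e^{h'+h} \leq (1-e^{-t}-\e)^\kappa \, e^{\e(\f_\infty - \log|s|_h)}$. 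Using $|s|_h \leq 1$ and the uniform bound $|\f_\infty| \leq C_0$, the right-hand side admits a uniform pointwise lower bound of the shape $(1-e^{-t}-\e)^\kappa - e^{B-t}$ once $B$ is chosen large enough in terms of $\sup_X|\f_\infty|$; this is exactly what the prescribed ODE makes an equality, and the ODE structure simultaneously ensures that $h(t)$ remains bounded and non-positive as $t\to\infty$.

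It remains to verify $u_\e \leq \f$ on $\partial_P M_r = \{T_0\}\times\overline{\Omega_r} \cup [T_0,+\infty[\times\partial\Omega_r$. On the initial slice, $h(T_0) \leq 0$ (forced by the ODE) combined with $\e\log|s|_h \leq 0$ and condition~(3) gives $u_\e(T_0,\cdot) \leq \f_{T_0}$ directly. On the lateral boundary $\partial\Omega_r = \{|s|_h = r\}$, I would split $\e\log r = \tfrac{\e}{2}\log r + \tfrac{\e}{2}\log r$: the first half combines with $(1-e^{-t}-\e)\f_\infty \leq (1-\e)\sup_X\f_\infty + O(e^{-T_0})$ to yield the bound $\sup_{\R^+\times X}\f$ via condition~(1) and Lemma~\ref{lem:unifbdd}; the second half combines with $e^{-t}\rho \leq e^{-T_0}\sup_{\Omega_r}\rho$ and is killed by condition~(2). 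The residual $O(e^{-T_0})$ term is absorbed by the drift $-Ce^{-t}+h(t)$ after enlarging $C$. The main obstacle is the simultaneous calibration of $(r_\e, T_0, C, B)$ so that all four conditions are mutually compatible: condition~(1) forces $r_\e$ small, condition~(2) then forces $T_0$ large, condition~(3) and the lateral boundary estimate force $C$ large, and $B$ must finally be taken large enough that the argument of the logarithm in the ODE for $h$ stays positive on $[T_0,+\infty[$ while still dominating the pointwise error $\e(\f_\infty - \log|s|_h)$. Unwinding this chain of dependencies is the one nontrivial piece of bookkeeping.
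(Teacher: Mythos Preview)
Your boundary verification is essentially correct and matches the paper's. The interior argument, however, has a genuine gap.

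You assert that $\omega_{SF}\geq 0$ on $\Omega_r$, justifying this by ``the regularity hypothesis on $\omega_{SF}$ together with the fiberwise normalization of $\rho$''. Neither of these yields semi-positivity: Conjecture~C (the regularity hypothesis) only gives that $\omega_{SF}$ is \emph{smooth} on $X\setminus D$, and the normalization $\int_{X_y}\rho_y\,\omega_0^{n-\kappa}=0$ says nothing about horizontal directions. In fact the paper's section containing this proposition explicitly opens by replacing the earlier assumption ``$\rho$ is $\omega_0$-psh'' (used in Proposition~\ref{pro:subsol}) with mere smoothness on $X\setminus D$. So you cannot drop the term $e^{-t}\omega_{SF}$ and keep only $C_n^\kappa(1-e^{-t}-\e)^\kappa T_{can}^\kappa\wedge\omega_{SF}^{n-\kappa}$; the full algebraic expansion of $\bigl((1-e^{-t}-\e)\chi_{\f_\infty}+e^{-t}\omega_{SF}+\e(\chi+dd^c\log|s|_h)\bigr)^n$ must be controlled.

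This is precisely the origin of the $-e^{B-t}$ correction in the ODE, and your account of it is wrong. The cross terms $\chi_{\f_\infty}^{\,j}\wedge\omega_{SF}^{\,n-j}$ with $j<\kappa$ carry at least one extra factor $e^{-t}$ relative to the leading term; being smooth on the compact set $\overline{\Omega_r}$, each is bounded in absolute value by $C_r\,v(h)$, so the expansion yields
\[
(\theta_t+dd^c u_{\e,t})^n \;\geq\; C_n^\kappa\,e^{-(n-\kappa)t}\bigl\{(1-e^{-t}-\e)^\kappa - C_r e^{-t}\bigr\}\,e^{\f_\infty}\,v(h),
\]
with $C_r=e^B$. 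The residual factor $e^{\e(\f_\infty-\log|s|_h)}$ that you identify is handled \emph{not} by the $e^{B-t}$ term but by an additive constant $\e\inf_X\f_\infty$ in the ODE (see the paper's equation~(\ref{eq:subgal})). Your proposed inequality
$(1-e^{-t}-\e)^\kappa e^{\e(\f_\infty-\log|s|_h)}\geq (1-e^{-t}-\e)^\kappa - e^{B-t}$
actually fails for large $t$ whenever $\inf_X\f_\infty<0$: the defect on the left is of order $(1-\e)^\kappa(1-e^{\e\inf\f_\infty})>0$, while $e^{B-t}\to 0$.
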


\begin{proof}
We first note that $u_\e \leq \f$ on the parabolic boundary
$$
\partial_p M_r= \{T_0\} \times \Omega_r \bigcup [T_0,+\infty[ \times \partial \Omega_r.
$$
Indeed $h \leq 0$ and $\log|s|_h \leq 0$ hence 
$$
u_\e(T_0,x) \leq [1-\e] \f_{\infty}(x)+e^{-T_0} \rho-C e^{-T_0} \leq \f_{T_0}(x),
$$ 
as follows from our choice of $C$ (3), while for $x \in \partial \Omega_r=\{x \in X , \, |s(x)|_h=r \}$
the inequality $u_\e \leq \f$ follows from 
conditions (1) and (2).
Let us stress that the uniform upper-bound Lemma \ref{lem:unifbdd2} is used both in conditions (1) and (3).

We now check that $u_\e$ is a subsolution of the equation in $M_r$. 
Observe that $[1-e^{-t}-\e] \f_{\infty}+\e \log |s|_h$ is $(1-e^{-t})\chi$-psh, thus
\begin{eqnarray*}
\lefteqn{
(\omega_t+dd^c u_\e)^n
= \sum_{j=0}^\kappa C_n^j e^{-(n-j)t} (1-e^{-t})^j \chi_{\f_{\infty}}^j \wedge {\omega_{SF}}^{n-j} }\\
& & \geq  C_n^{\kappa} e^{-(n-\kappa)t} \left\{ [1-e^{-t}-\e]^{\kappa}-C_r e^{-t} \right\} e^{\f_{\infty}} v(h) \\
&& \geq  C_n^{\kappa} e^{-(n-\kappa)t}  \left\{ [1-e^{-t}-\e]^{\kappa}-C_r e^{-t} \right\} e^{(1-\e) \f_{\infty}+\e \inf_X \f_{\infty}+ \e \log |s|_h } v(h) \\
&& =  C_n^{\kappa} e^{-(n-\kappa)t} e^{\partial_t u_\e+u_\e} v(h)
\end{eqnarray*}
if $h$ satisfies
\begin{equation} \label{eq:subgal}
h(t)+h'(t)=\e \inf_X \f_{\infty}+\log \left\{ [1-e^{-t}-\e]^{\kappa}- e^{B-t} \right\},
\end{equation}
with $C_r=e^B$ and initial condition $h(0)=0$.
\end{proof}

 \begin{coro}
For all $x \in X^{reg} \setminus D$,
$$
\f_{\infty}(x) \leq \liminf_{t \rightarrow +\infty} \f(t,x).
$$
\end{coro}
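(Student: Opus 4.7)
The plan is to combine the approximate subsolutions $u_\e$ from Proposition~\ref{pro:subsol2} with the comparison principle Theorem~\ref{thm:pcpabord}, and then pass to the limit first in $t$ and then in $\e$. Fix $x_0 \in X^{\mathrm{reg}} \setminus D$, so that $|s(x_0)|_h > 0$ and $\rho(x_0)$, $\f_\infty(x_0)$, $\log|s(x_0)|_h$ are all finite. For every $\e>0$ small enough, I would select $r_\e$, $T_0(\e)$ and $C(\e)$ as in Proposition~\ref{pro:subsol2} with the additional requirement that $r_\e < |s(x_0)|_h$; then $x_0$ lies in the interior of $\Omega_{r_\e}$. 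Since $u_\e \leq \f$ on the parabolic boundary of $M_{r_\e} = [T_0(\e),+\infty[ \times \Omega_{r_\e}$ by the very construction of $u_\e$, and since $\f$ is in particular a viscosity supersolution of (\ref{eq:flotfinal}), Theorem~\ref{thm:pcpabord} applied on $M_{r_\e}$ (which is legitimate because $\Omega_{r_\e} \subset X^{\mathrm{reg}}$ lies in the ample locus of $\theta$ and the family $\theta_t$ is regular) gives
$$u_\e(t,x_0) \leq \f(t,x_0) \quad \text{for all } t \geq T_0(\e).$$

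Next, I would take the liminf as $t \to +\infty$ on the left-hand side and compute the limit on the right-hand side. From the explicit expression
$$u_\e(t,x_0) = [1-e^{-t}-\e]\f_\infty(x_0) + e^{-t}\rho(x_0) + \e \log|s(x_0)|_h - C e^{-t} + h(t),$$
all the terms containing $e^{-t}$ vanish, while the ODE for $h$ stated in Proposition~\ref{pro:subsol2} shows that $h(t) \to h_\infty(\e) := \kappa \ln(1-\e)$ (obtained by setting $h' = 0$ and $e^{-t} = 0$ in the forcing term). I therefore obtain
$$\liminf_{t\to+\infty} \f(t,x_0) \;\geq\; (1-\e)\f_\infty(x_0) + \e \log|s(x_0)|_h + h_\infty(\e).$$

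Finally, I would let $\e \to 0^+$. Here the assumption $x_0 \notin D$ is essential: it guarantees that $\e \log|s(x_0)|_h \to 0$, whereas this term would blow up to $-\infty$ on $D$. The remaining term $h_\infty(\e) = \kappa\ln(1-\e)$ also tends to $0$, which is the only genuine verification needed and is entirely routine once one writes the solution of the linear ODE by Duhamel's formula. Passing to the limit yields $\liminf_{t\to+\infty}\f(t,x_0) \geq \f_\infty(x_0)$, as required. The main step of the argument is really Proposition~\ref{pro:subsol2} together with the version of the comparison principle for manifolds with boundary (Theorem~\ref{thm:pcpabord}); the corollary is a clean limiting argument, with no further analytic obstacle.
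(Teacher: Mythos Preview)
Your argument is correct and follows essentially the same route as the paper: apply the comparison principle of Theorem~\ref{thm:pcpabord} on $M_{r_\e}$ to the subsolution $u_\e$ of Proposition~\ref{pro:subsol2}, compute the limit of $u_\e(t,x_0)$ as $t\to+\infty$, and then let $\e\to 0$. The only cosmetic difference is that the paper first lets $r\to 0$ (so the inequality holds on all of $X^{\mathrm{reg}}\setminus D$) and then $\e\to 0$, whereas you fix $x_0$ and take $r_\e<|s(x_0)|_h$; this is equivalent. Note that the paper's proof of Proposition~\ref{pro:subsol2} actually uses the ODE~(\ref{eq:subgal}) for $h$, which carries the extra additive term $\e\inf_X\f_\infty$; with that version the limit of $h(t)$ is $\e\inf_X\f_\infty+\kappa\ln(1-\e)$ rather than $\kappa\ln(1-\e)$, but this still tends to $0$ as $\e\to 0$ and your conclusion is unchanged.
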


\begin{proof}
We let the reader   check that 
$$
 \e \inf_X \f_{\infty}+\kappa \log[1-\e] \leq B  \e \leq \liminf_{t \rightarrow +\infty} h(t).
$$
It follows therefore from the Comparison Theorem \ref{thm:pcpabord} that for $x \in \Omega_r$,
$$
[1-\e] \f_{\infty}(x)+\e \log |s|_h(x)+B \e \leq \liminf_{t \rightarrow +\infty} u_\e(t,x) \leq \liminf_{t \rightarrow +\infty} \f(t,x). 
$$

We let $r \rightarrow 0^+$, so that this inequality holds for all $x \in X^{reg} \setminus D$.
Since $\e>0$ is arbitrary, we infer  $\f_{\infty}(x) \leq \liminf_{t \rightarrow +\infty} \f(t,x)$
for all $x \in X \setminus D$.
\end{proof}

\subsection{Construction of supersolutions} \label{sec:supersol}

\subsubsection{Holomorphic submersions}

We first treat the restrictive case when the Iitaka fibration is a holomorphic submersion.
This allows to explain the main idea in a simple setting.

\begin{prop} \label{pro:supsolkappa1}
Assume $j:X \rightarrow J$ is a smooth holomorphic submersion.
Fix $C \geq sup_X (\f_0-\rho)$. The function
$$
(t,x) \in \R^+ \times X \mapsto v(t,x):=(1-e^{-t}) \f_{\infty}(x)+e^{-t}\rho(x)+Ce^{-t}+g(t) \in \R
$$
is a supersolution to the Cauchy problem for (\ref{eq:flotfinal}).
\end{prop}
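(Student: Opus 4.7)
The plan is to verify directly that $v$ is a classical supersolution (in the pluripotential sense along each time slice) and then invoke Lemma~\ref{lem:pluripotvisc} to promote this to the viscosity supersolution property. The smooth submersion hypothesis is crucial because it forces $X$ to be smooth, $D = \emptyset$, and all the relevant geometric objects---$\f_\infty$, $\omega_{SF}$, and the volume form $v(h)$---to be smooth on all of $X$, with $v(h)$ strictly positive.

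First I compute
\begin{align*}
\theta_t + dd^c v_t &= (1-e^{-t})(\chi + dd^c \f_\infty) + e^{-t}(\omega_0 + dd^c \rho) \\
&= (1-e^{-t}) \chi_{\f_\infty} + e^{-t} \omega_{SF}, \\
\partial_t v + v_t &= \f_\infty + g(t) + g'(t).
\end{align*}
Expanding the $n$-fold wedge and using the fact that $\chi_{\f_\infty} = j^*\theta_{can}$ is pulled back from the $\kappa$-dimensional base (hence $\chi_{\f_\infty}^{\kappa+1} = 0$),
$$(\theta_t + dd^c v_t)^n = \sum_{j=0}^\kappa \binom{n}{j}(1-e^{-t})^j e^{-(n-j)t} \chi_{\f_\infty}^j \wedge \omega_{SF}^{n-j}.$$
The leading $j = \kappa$ term equals $C_n^\kappa (1-e^{-t})^\kappa e^{-(n-\kappa)t} e^{\f_\infty} v(h)$ by the identity $\chi_{\f_\infty}^\kappa \wedge \omega_{SF}^{n-\kappa} = e^{\f_\infty} v(h)$ established above. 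For $j < \kappa$, each $\chi_{\f_\infty}^j \wedge \omega_{SF}^{n-j}$ is a smooth $(n,n)$-form on the compact manifold $X$, hence dominated uniformly by $v(h)$: there exists $A > 0$ with $\chi_{\f_\infty}^j \wedge \omega_{SF}^{n-j} \leq A \, v(h)$ for all $j < \kappa$, yielding
$$(\theta_t + dd^c v_t)^n \leq C_n^\kappa e^{-(n-\kappa)t} \bigl[(1 - e^{-t})^\kappa + A e^{-t}\bigr] e^{\f_\infty} v(h).$$

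It suffices then to choose $g$ so that $e^{g(t) + g'(t)} \geq (1-e^{-t})^\kappa + A e^{-t}$, for instance by taking $g$ to be the solution of the ODE
$$g'(t) + g(t) = \log\bigl[(1 - e^{-t})^\kappa + A e^{-t}\bigr], \qquad g(0) = 0.$$
The right-hand side decays to $0$ as $t \to \infty$, so standard ODE estimates show that $g$ is bounded and $g(t) \to 0$. The parabolic initial condition is satisfied because $v(0,x) = \rho(x) + C + g(0) = \rho(x) + C \geq \f_0(x)$ by the choice of $C \geq \sup_X(\f_0 - \rho)$. Lemma~\ref{lem:pluripotvisc} (applied with $w = \partial_t v + v_t$) then delivers the viscosity supersolution property on $\R^+ \times X$. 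The main (mild) obstacle is the uniform comparison $\chi_{\f_\infty}^j \wedge \omega_{SF}^{n-j} \leq A\, v(h)$ for $j < \kappa$: this is precisely where the smooth submersion assumption is used, and where the general case must work substantially harder (see Proposition~\ref{pro:supsolkappageneral}), since outside the critical locus of $j$ the volume form $v(h)$ degenerates while the cross terms need not, forcing the passage to approximate supersolutions on compact subdomains.
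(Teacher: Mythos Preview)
Your proof is correct and follows essentially the same route as the paper's: expand $(\theta_t + dd^c v_t)^n = \bigl((1-e^{-t})\chi_{\f_\infty}+e^{-t}\omega_{SF}\bigr)^n$, isolate the $j=\kappa$ term via $\chi_{\f_\infty}^\kappa \wedge \omega_{SF}^{n-\kappa} = e^{\f_\infty} v(h)$, dominate the lower-order cross terms by a constant multiple of $e^{\f_\infty} v(h)$ using the global smoothness furnished by the submersion hypothesis, and absorb everything into an ODE for $g$. Two cosmetic points: your displayed estimate tacitly absorbs a factor $e^{-\inf_X \f_\infty}$ when passing from $\chi_{\f_\infty}^j \wedge \omega_{SF}^{n-j} \leq A\,v(h)$ to the inequality with $e^{\f_\infty}$ factored out, and your ODE for $g$ differs slightly from the paper's (\ref{eq:supsol}) (the paper further bounds $(1-e^{-t})^\kappa \leq 1$ to get $g'+g=\ln(1+e^{B-t})$), but both choices give $g(t)\to 0$ and serve equally well.
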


Here $g$ denote the solution of the ordinary differential equation   (\ref{eq:supsol}).

\begin{proof} 
The choice of $C$ insures that $v(0,x) \geq \f_0(x)$.
We now check that $v$ is a supersolution to the equation, i.e. satisfies
$$
(\theta_t+dd^c v_t)_+^n \leq C_n^{\kappa} e^{-(n-\kappa)t} e^{\partial_t v+v_t} v(h)
$$
in the viscosity sense, by using Lemma \ref{lem:pluripotvisc}.

The situation is more involved than for subsolutions, as we cannot get rid of the mixed  terms 
$(\chi +dd^c \f_{\infty})^{j} \wedge (\omega_{SF})_+^{n-j}$, when $j<\kappa$.
We take advantage of the assumption that $j:X \rightarrow J$ is a smooth holomorphic submersion, as in this case the extra terms
are smooth: for $j<\kappa$, we get
$ \chi_{\f_{\infty}}^{j} \wedge \omega_{SF}^{n-j} \leq n e^{\f_{\infty}+B'} v(h)$ for an appropriate choice of $B'$, hence
\begin{eqnarray*}
(\theta_t+dd^c v_t)_+^n &\leq&  C_n^{\kappa} e^{-(n-\kappa)t} \left\{
\chi_{\f_{\infty}}^{\kappa} \wedge \omega_{SF}^{n-\kappa} + 
c_n e^{-t}  \sum_{j=0}^{\kappa-1}  \chi_{\f_{\infty}}^{j} \wedge \omega_{SF}^{n-j}  \right\}\\
& \leq &  C_n^{\kappa} e^{-(n-\kappa)t}   e^{\f_{\infty}} v(h) \left\{ 1+e^{B-t} \right\}    \\
&=& C_n^{\kappa} e^{-(n-\kappa)t}  e^{\partial_t u+u_t} v(h),
\end{eqnarray*}
if the function $g$ solves solves
$
g'(t)+g(t)=\ln \left(1+e^{B-t} \right)
$
with  $g(0)=0$.
\end{proof}

\begin{lem} \label{lem:supsolkappa1}
Let $g:\R^+ \rightarrow \R^+$ be the smooth solution of the ODE
\begin{eqnarray} \label{eq:supsol}
g'(t)+g(t)=\kappa \ln(1+e^{B-t})
\; \; \text{ with } \; \;
g(0)=0.
\end{eqnarray}
There exists $C>0$ such that $0 \leq g(t) \leq C (t+1) e^{-t}$ for all $t \geq 0$.
\end{lem}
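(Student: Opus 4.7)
The plan is to solve this linear first-order ODE explicitly using the integrating factor $e^t$. Multiplying the equation through by $e^t$ yields $(e^t g(t))' = \kappa e^t \ln(1 + e^{B-t})$, and integrating from $0$ to $t$ using $g(0)=0$ gives the closed form
\[
g(t) = e^{-t} \int_0^t \kappa e^s \ln\!\left(1 + e^{B-s}\right) ds.
\]

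The lower bound $g(t) \geq 0$ is then immediate since the integrand is positive.

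For the upper bound, I would exploit the elementary inequality $\ln(1+x) \leq x$ valid for all $x \geq 0$, applied with $x = e^{B-s}$. This gives $e^s \ln(1 + e^{B-s}) \leq e^s \cdot e^{B-s} = e^B$, so the integrand is uniformly bounded by $\kappa e^B$ on $\R^+$. Substituting into the integral formula yields
\[
g(t) \leq \kappa e^B \, t \, e^{-t} \leq C(t+1)e^{-t}
\]
with $C := \kappa e^B$, which completes the proof.

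There is no real obstacle here, as the statement is purely an elementary ODE fact; the only minor point is to notice that the crude bound $\ln(1+x) \leq x$ already captures the exponentially small behaviour of the right-hand side as $t \to \infty$, which is what produces the factor $e^{-t}$ in the estimate. The polynomial factor $(t+1)$ simply absorbs the linear growth of the antiderivative of the bounded integrand (and the value at $t = 0$).
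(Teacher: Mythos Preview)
Your proof is correct. The paper does not supply its own proof of this lemma; it is stated as an elementary fact left to the reader (as with the companion Lemma~\ref{lem:subsol}), and your integrating-factor argument together with the bound $\ln(1+x)\le x$ is exactly the kind of verification intended.
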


It follows from Propositions \ref{pro:subsol} and \ref{pro:supsolkappa1},
Lemmata \ref{lem:subsol} and \ref{lem:supsolkappa1},
and an application of the comparison principle Theorem~\ref{thm:pcpabord}
that:

\begin{coro}  
When $j:X \rightarrow J$ is a smooth holomorphic submersion,
there exists $C>0$ such that for all $x \in X$ and all $t \geq 0$,
$$
|\f_t(x)-\f_{\infty}(x)| \leq C [1+t] e^{-t}
$$
\end{coro}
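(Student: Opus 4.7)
The plan is to combine the subsolution from Proposition \ref{pro:subsol} with the supersolution from Proposition \ref{pro:supsolkappa1}, then sandwich the viscosity solution $\f$ between them via the comparison principle Theorem~\ref{thm:pcpabord}, and finally read off the claimed exponential rate from the explicit ODE estimates of Lemmas \ref{lem:subsol} and \ref{lem:supsolkappa1}.

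First I would check the hypotheses needed to invoke Proposition \ref{pro:subsol}. When $j:X\to J$ is a smooth holomorphic submersion, the fiberwise Yau theorem together with the implicit function theorem gives that $\omega_{SF}=\omega_0+dd^c\rho$ is a genuine smooth closed semipositive $(1,1)$-form on all of $X$, so $\rho$ is globally $\omega_0$-psh. Thus Proposition \ref{pro:subsol} produces, for $C_1\ge \sup_X(\rho-\f_0)$, a global subsolution
\[u(t,x)=(1-e^{-t})\f_\infty(x)+e^{-t}\rho(x)-C_1 e^{-t}+h(t),\]
with $h$ as in \eqref{eq:subsol2}. Symmetrically, Proposition \ref{pro:supsolkappa1} furnishes, for $C_2\ge \sup_X(\f_0-\rho)$, a global supersolution
\[v(t,x)=(1-e^{-t})\f_\infty(x)+e^{-t}\rho(x)+C_2 e^{-t}+g(t),\]
with $g$ as in \eqref{eq:supsol}.

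Next I would apply the global comparison principle (Theorem~\ref{thm:pcpabord}, with empty boundary since $j$ being a submersion means $X$ is smooth so $M=\overline M=X$) to conclude $u(t,x)\le\f(t,x)\le v(t,x)$ for all $(t,x)\in\R^+\times X$. Subtracting $\f_\infty(x)$ from all three expressions gives
\[e^{-t}(\rho(x)-\f_\infty(x))-C_1 e^{-t}+h(t)\;\le\;\f_t(x)-\f_\infty(x)\;\le\;e^{-t}(\rho(x)-\f_\infty(x))+C_2 e^{-t}+g(t).\]
Since $\rho$ and $\f_\infty$ are continuous on the compact manifold $X$, the quantity $|\rho-\f_\infty|$ is bounded by some constant $M_0$, and the terms $C_1 e^{-t}, C_2 e^{-t}$ are bounded by $C' e^{-t}$.

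Finally I would invoke the ODE estimates: Lemma \ref{lem:subsol} gives $-C_3(1+t)e^{-t}\le h(t)\le 0$, while Lemma \ref{lem:supsolkappa1} gives $0\le g(t)\le C_4(1+t)e^{-t}$. Combining,
\[|\f_t(x)-\f_\infty(x)|\;\le\;(M_0+C')e^{-t}+\max(|h(t)|,|g(t)|)\;\le\;C(1+t)e^{-t},\]
for a suitable $C>0$ independent of $(t,x)$, which is the claimed estimate.

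The only conceptual point requiring care is verifying that in the smooth submersion case the semi-flat potential $\rho$ is indeed globally $\omega_0$-psh (not merely $\omega_0$-psh on the ample locus), so that Proposition \ref{pro:subsol} rather than its weaker variant Proposition \ref{pro:subsol2} applies; this is where the smooth submersion hypothesis does the work and is what allows the clean exponential-with-polynomial-factor rate. Everything else is a direct composition of results already established.
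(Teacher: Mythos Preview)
Your proof is correct and follows exactly the approach the paper indicates: combine Propositions \ref{pro:subsol} and \ref{pro:supsolkappa1} with the comparison principle Theorem~\ref{thm:pcpabord}, then read off the rate from Lemmata \ref{lem:subsol} and \ref{lem:supsolkappa1}. One small remark: the implicit function theorem and fiberwise Yau only give smoothness of $\omega_{SF}$; its global semipositivity (hence $\rho\in PSH(X,\omega_0)$, as required by Proposition \ref{pro:subsol}) is the content of Choi's result \cite{Choi15} mentioned in the introduction, so you may want to cite that rather than the implicit function theorem for this step.
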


The flow therefore uniformly deforms the initial continuous potential $\f_0$
 towards $\f_{\infty}$, at an exponential speed.

\subsubsection{The general case}

We now come back to the general setting of an abundant minimal model $X$ with 
canonical singularities, and let $j:X \rightarrow J$ denote the Iitaka fibration onto the canonical
model of $X$.

We are now going to use the fact that $\f_{\infty}$ is smooth in $X \setminus D$, where
$D=(s=0)=f^{-1}D'$ is a divisor in $X$ (see section \ref{sec:st}). We fix $h$ a smooth hermitian metric
of the line bundle $L_D$ and normalize $h$ so that $|s|_h \leq 1$ on $X$. 
Since $D=f^{-1}D'$, we observe that there exists $A>0$ such that the curvature of $h$
satisfies $\Theta_h \leq A \chi$. The Poincar\'e-Lelong equation thus yields
\begin{equation} \label{eq:curvbdd}
-dd^c \log|s|_h=-[D]+\Theta_h \leq A \chi,
\end{equation}
where $[D]$ denotes the current of integration along $D$.
We also set
$$
V_{r}(D):=\{ x \in X \;  ;  \; |s(x)|_h < r \}
$$
for $r>0$ fixed. The regularity assumption insures that
$$
\chi+dd^c \f_{\infty} \leq C_{r} \, \chi
\text{ in } X \setminus V_{r}(D),
$$
for some constant $C_r$ large enough. We set $\Omega_r:=X^{reg} \setminus V_{r}(D)$.

We moreover assume that the semi-flat current $\omega_{SF}=\omega_0+dd^c \rho$
is smooth in $X \setminus D$, thus there exists $C_r'>0$ such that 
$$
(\omega_0+dd^c \rho)_+ \leq C'_{r} \, \omega_0
\text{ in } X \setminus V_{r}(D).
$$

Now $\chi^j \wedge \omega_0^{n-j}  \leq C'' v(h)$
on $X$, so we obtain the following upper-bound  in $X \setminus V_{r}(D)$:
\begin{equation} \label{eq:supergeneral}
\sum_{j=0}^{\kappa-1} C_n^j (\chi+dd^c \f_{\infty})^j \wedge (\omega_0+dd^c \rho)_+^{n-j}
\leq C_r'' \, C_n^{\kappa} v(h).
\end{equation}

We can now proceed with the construction of a supersolution.
For technical reasons, we  provide a supersolution in
$[T_0,+\infty[ \times X^{reg} \setminus V_{r}(D)$, where the stopping time
$T_0$ depends on a parameter $\e>0$ and $r=r(\e)$ :

\begin{prop} \label{pro:supsolkappageneral}
 Fix $\e>0$. 
 
 1) Fix $r=r_\e>0$ so small that 
{\small $[1+\e A] \inf_X \f_{\infty}-\frac{\e}{2} \log r_\e \geq \sup_{\R^+ \times X} \f(t,x)$.}

\smallskip

2) Fix $T_0=T_0(\e) \geq 0$ so large that $e^{-T_0} \rho_{| \partial \Omega_r}-\frac{\e}{2} \log r \geq 0$.

\smallskip

3) Fix $C=C(\e)>1$ s.t. $[1+\e A] \inf_X \f_{\infty}+e^{-T_0} \inf_{\Omega_r} \rho+Ce^{-T_0} \geq \f_{T_0}$.

\smallskip

\noindent Then  
$$
(t,x) \mapsto v_\e(t,x)=[1+\e A] \f_{\infty}(x)+e^{-t} \rho(x)-\e \log |s|_h(x)+C e^{-t}+g(t)
$$
is a supersolution to the Cauchy problem for (\ref{eq:flotfinal}) in $ M_r=]T_0,+\infty[ \times \Omega_r$.
\end{prop}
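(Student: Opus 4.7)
\medskip

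\noindent \emph{Proof plan.} My plan is to mirror the subsolution argument in Proposition \ref{pro:subsol2}, replacing the lower bound on the Monge--Amp\`ere operator by an upper bound built from the curvature estimate (\ref{eq:curvbdd}) together with the control (\ref{eq:supergeneral}) on the subdominant mixed terms. The ODE defining $g$ will only be written down at the end, once the precise inequality it must satisfy has been uncovered.

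\textbf{Step 1: parabolic boundary.} First I would check that $v_\e \geq \f$ on $\partial_P M_r = \{T_0\}\times \Omega_r \cup [T_0,+\infty[\times \partial \Omega_r$. Since $g\geq 0$ and $-\e \log|s|_h\geq 0$, condition (3) gives $v_\e(T_0,x)\geq \f_{T_0}(x)$ on $\Omega_r$. On the lateral boundary $\{|s|_h=r\}$, $-\e\log|s|_h = -\e\log r > 0$, and conditions (1) and (2) combine with $g\geq 0$ and $Ce^{-t}\geq 0$ to yield $v_\e(t,x)\geq \sup_{\R^+\times X}\f\geq \f(t,x)$.

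\textbf{Step 2: upper bound on $\theta_t + dd^c v_\e$.} A direct computation in $\Omega_r$ (where $\f_\infty$, $\rho$ and $\log|s|_h$ are smooth) gives
\[
\theta_t + dd^c v_\e \;=\; e^{-t}(\omega_0+dd^c\rho) + (1-e^{-t})(\chi+dd^c\f_\infty) + \e A\, dd^c \f_\infty - \e\, dd^c \log|s|_h.
\]
Using (\ref{eq:curvbdd}), $-\e\, dd^c\log|s|_h \leq \e A\,\chi$ on $X$, and therefore
\[
\theta_t+dd^c v_\e \;\leq\; \beta_t \;:=\; e^{-t}\omega_{SF} + (1-e^{-t}+\e A)(\chi+dd^c\f_\infty),
\]
the latter being semi-positive on $\Omega_r$. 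Hence $(\theta_t+dd^c v_\e)_+^n \leq \beta_t^n$.

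\textbf{Step 3: expansion and bound of $\beta_t^n$.} Since $\chi+dd^c\f_\infty = j^*\theta_{can}$ has rank at most $\kappa$, expanding $\beta_t^n$ gives a sum indexed by $0\leq j\leq \kappa$. Separating the dominant term $j=\kappa$, which by the identity $(\chi+dd^c\f_\infty)^\kappa\wedge \omega_{SF}^{n-\kappa}=e^{\f_\infty}v(h)$ contributes $C_n^\kappa e^{-(n-\kappa)t}(1-e^{-t}+\e A)^\kappa e^{\f_\infty} v(h)$, and bounding the subdominant terms $0\leq j\leq \kappa-1$ by means of (\ref{eq:supergeneral}) (each carrying an extra factor $e^{-t}$ coming from $e^{-(n-j)t}$), one obtains on $\Omega_r$
\[
\beta_t^n \;\leq\; C_n^\kappa\, e^{-(n-\kappa)t}\Bigl\{(1-e^{-t}+\e A)^\kappa e^{\f_\infty} + e^{B-t}\Bigr\}\, v(h),
\]
where $B=B(\e,r)$ absorbs $C_r''$, $(1+\e A)^{\kappa-1}$ and $e^{-\inf_X\f_\infty}$.

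\textbf{Step 4: the ODE for $g$.} A straightforward computation yields $\partial_t v_\e + v_\e = (1+\e A)\f_\infty - \e\log|s|_h + g(t)+g'(t)$. Using $\e A\f_\infty \geq \e A\inf_X\f_\infty$ and $-\e\log|s|_h\geq 0$ to bound $e^{\partial_t v_\e + v_\e}$ from below, the supersolution inequality (in pluripotential sense, via Lemma \ref{lem:pluripotvisc}) reduces to
\[
(1-e^{-t}+\e A)^\kappa + e^{B-t}\, e^{-\f_\infty} \;\leq\; e^{\e A\inf_X\f_\infty + g(t)+g'(t)}.
\]
Absorbing the factor $e^{-\f_\infty}$ into a (possibly larger) constant $B$, I would define $g:\R^+\to\R^+$ as the unique smooth solution of
\[
g'(t)+g(t) \;=\; \log\!\bigl[(1-e^{-t}+\e A)^\kappa + e^{B-t}\bigr] - \e A\inf_X\f_\infty, \qquad g(0)=0,
\]
which is nonnegative and converges to $\kappa\log(1+\e A)-\e A\inf_X\f_\infty$ as $t\to\infty$.

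\textbf{Step 5: conclusion.} With this choice of $g$, Lemma \ref{lem:pluripotvisc} implies that $v_\e$ is a viscosity supersolution of (\ref{eq:flotfinal}) on $M_r$, which combined with Step 1 finishes the proof.

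The main obstacle I foresee is Step 3: one must justify that the form $\beta_t$ really dominates $\theta_t+dd^c v_\e$ as $(1,1)$-forms so that $(\theta_t+dd^c v_\e)_+^n\leq \beta_t^n$ (requiring semi-positivity of $\beta_t$, which uses smoothness of $\omega_{SF}$ and $\theta_{can}$ on $\Omega_r$), and that the subdominant mixed terms can be absorbed with a uniform constant $C_r''$; this last point is where Conjecture C (via the bound $\omega_{SF}\leq C_r'\omega_0$ on $X\setminus V_r(D)$) is essential.
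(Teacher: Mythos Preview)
Your approach matches the paper's almost exactly (same boundary check, same use of (\ref{eq:curvbdd}) to bound $\theta_t+dd^c v_\e$ from above by a semi-positive form, same separation of the $j=\kappa$ term from the subdominant ones via (\ref{eq:supergeneral}), same ODE mechanism for $g$). There is, however, a computational slip in your Step 2 that you should fix.

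You write
\[
\theta_t + dd^c v_\e \;=\; e^{-t}\omega_{SF} + (1-e^{-t})(\chi+dd^c\f_\infty) + \e A\, dd^c \f_\infty - \e\, dd^c \log|s|_h,
\]
but the coefficient of $dd^c\f_\infty$ in $v_\e$ is $1+\e A$, not $1-e^{-t}+\e A$: expanding correctly gives
\[
\theta_t + dd^c v_\e \;=\; e^{-t}\omega_{SF} + (1-e^{-t})\chi + (1+\e A)\, dd^c \f_\infty - \e\, dd^c \log|s|_h .
\]
Your identity is missing the term $e^{-t}dd^c\f_\infty$, which has no sign (only $\chi+dd^c\f_\infty\geq 0$), so your claimed inequality $\theta_t+dd^c v_\e\leq \beta_t$ with $\beta_t=e^{-t}\omega_{SF}+(1-e^{-t}+\e A)\chi_{\f_\infty}$ is not justified. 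The easy repair, which is precisely what the paper does, is to use $1-e^{-t}\leq 1$ after applying (\ref{eq:curvbdd}):
\[
\theta_t+dd^c v_\e \;\leq\; e^{-t}\omega_{SF} + (1+\e A)\,\chi_{\f_\infty},
\]
and then replace $\omega_{SF}$ by $(\omega_{SF})_+$ to guarantee semi-positivity of the dominating form (Conjecture C only gives smoothness of $\omega_{SF}$ on $\Omega_r$, not positivity). With the coefficient $(1+\e A)$ in place of $(1-e^{-t}+\e A)$, your Steps 3--5 go through verbatim and produce exactly the ODE $g'(t)+g(t)=\log[(1+\e A)^\kappa+e^{B-t}]-\e A\inf_X\f_\infty$ stated in the proposition.
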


Here  $g:\R \rightarrow \R^+$ denotes unique the solution of the ODE
$$
g'(t)+g(t)=\ln \left[(1+\e A)^{\kappa}+e^{B-t} \right]-A \, \e \inf_X \f_{\infty}
$$
with initial condition $g(0)=0$, where $B>0$ is specified in (\ref{eq:br}) below.

\begin{proof}
We first note that $v_\e \geq \f$ on the parabolic boundary
$$
\partial_p M_r= \{T_0\} \times \Omega_r \bigcup [T_0,+\infty[ \times \partial \Omega_r.
$$
Indeed $g \geq 0$ and $-\log|s|_h \geq 0$ hence 
$$
v_\e(T_0,x) \geq [1+\e A] \f_{\infty}(x)+e^{-T_0} \rho+ C e^{-T_0} \geq \f_{T_0}(x),
$$ 
as follows from our choice of $C$ (3), while for $x \in \partial \Omega_r=\{x \in X , \, |s(x)|_h=r \}$
the inequality $v_\e \geq \f$ follows from 
conditions (1) and (2).
Let us stress that the uniform upper-bound in Lemma \ref{lem:unifbdd} is used both in conditions (1) and (3).

We now check that $v_\e$ is a supersolution of the equation in $M_r$. 
We set $\chi_{\f_{\infty}}:=\chi+dd^c \f_{\infty}$.
Recall that 
$
-\e dd^c \log |s|_h \leq A \e \chi,
$
hence
$$
(\theta_t+dd^c v_\e)_+ \leq (1+A \e) \chi+e^{-t} (\omega_{SF})_+.
$$
We infer
$$
(\theta_t+dd^c v_\e)_+^n \leq  
 \sum_{j=0}^{\kappa} C_n^j \, (1+\e A)^j \, e^{-(n-j)t} \, \chi_{\f_{\infty}}^j \wedge (\omega_{SF})_+^{n-j} .
$$
Using that $\f_{\infty}$ and $\rho$ are smooth in $ \Omega_r$, we get
\begin{equation} \label{eq:br}
 (\theta_t+dd^c v_\e)_+^n
\leq C_n^{\kappa} e^{-(n-\kappa)t} e^{\f_{\infty}} \, v(h)
\left\{(1+\e A)^{\kappa} +e^{B-t} \right\}
\end{equation}
for some constant $B>0$ that depends on all our parameters.
Now 
\begin{eqnarray*}
\partial_t v_\e+v_\e &=& (1+\e A) \f_{\infty}-\e \log |s|_h+g'+g \\
& \geq & \f_{\infty}+A \, \e  \inf_X \f_{\infty} +g'+g,
\end{eqnarray*}
 since we have normalized $h$ so that $\log |s|_h \leq 0$.
 
 Thus $(\theta_t+dd^c v_\e)_+^n \leq e^{\partial_t v_\e+v_\e } v(h)$ and
it follows finally from Lemma \ref{lem:pluripotvisc} that $v_\e$ is a viscosity supersolution of the equation in 
$[T_0,+\infty[  \times \Omega_r$.
\end{proof}

\begin{coro}
For all $x \in X^{reg} \setminus D$,
$$
\limsup_{t \rightarrow +\infty} \f(t,x) \leq \f_{\infty}(x).
$$
\end{coro}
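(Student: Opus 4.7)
The plan is to proceed in strict parallel to the preceding corollary: where that argument used the subsolution $u_\e$ of Proposition \ref{pro:subsol2} together with the comparison principle Theorem \ref{thm:pcpabord}, I would now use the supersolution $v_\e$ furnished by Proposition \ref{pro:supsolkappageneral} on the relatively compact domain $M_r = ]T_0,+\infty[ \times \Omega_r$, which by construction sits inside the ample locus of $\chi$ so that Theorem \ref{thm:pcpabord} applies.

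The first step is to extract the asymptotic behavior of $g$. From the ODE
\[
g'(t)+g(t)=\ln\!\left[(1+\e A)^{\kappa}+e^{B-t}\right]-A\,\e\inf_X \f_{\infty},\qquad g(0)=0,
\]
the right-hand side converges as $t\to +\infty$ to $\kappa\log(1+\e A)-A\e\inf_X\f_{\infty}$, which is $O(\e)$. A direct integrating-factor computation (or a monotonicity argument) then shows that
\[
\limsup_{t\to +\infty} g(t)= \kappa\log(1+\e A)-A\e\inf_X\f_{\infty}=O(\e).
\]

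Next, fix $\e>0$ and choose $r=r_\e$, $T_0=T_0(\e)$ and $C=C(\e)$ as in Proposition \ref{pro:supsolkappageneral}; the uniform bounds Lemma \ref{lem:unifbdd} and Lemma \ref{lem:unifbdd2} guarantee that such choices exist. By that proposition, $v_\e$ is a viscosity supersolution to \eqref{eq:flotfinal} on $M_r$ with $v_\e\geq\f$ on the parabolic boundary $\partial_P M_r$. Since $\f$ is a solution (hence a subsolution), Theorem \ref{thm:pcpabord} yields $\f\leq v_\e$ throughout $M_r$. Passing to $\limsup$ as $t\to +\infty$ gives, for every $x\in\Omega_r$,
\[
\limsup_{t\to +\infty}\f(t,x)\ \leq\ (1+\e A)\,\f_{\infty}(x)\,-\,\e\log|s(x)|_h\,+\,O(\e).
\]

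Finally, for any $x\in X^{reg}\setminus D$ one has $|s(x)|_h>0$, so $x\in\Omega_r$ for all $r$ sufficiently small; the inequality above therefore holds at such an $x$ once $\e$ is fixed. Letting $\e\to 0^+$, the term $\e\log|s(x)|_h$ tends to $0$ and the $O(\e)$ corrections disappear, so we recover $\limsup_{t\to+\infty}\f(t,x)\leq \f_{\infty}(x)$, as required. The only genuinely delicate point is the applicability of Theorem \ref{thm:pcpabord} on a manifold with boundary: this is precisely the content of the refined comparison principle established in Section \ref{sec:visc}, and it is the reason Proposition \ref{pro:supsolkappageneral} was formulated on the truncated domain $\Omega_r$ rather than globally on $X$.
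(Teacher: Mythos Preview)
Your proof is correct and follows essentially the same approach as the paper: apply the comparison principle Theorem~\ref{thm:pcpabord} on $M_r$ to obtain $\f\leq v_\e$, read off the $\limsup$ using the asymptotics of $g$, and then let $\e\to 0$. The only inaccuracy is cosmetic: $\Omega_r$ is not literally in the ``ample locus of $\chi$'' (which is not big when $\kappa<n$), but this does not affect the argument since Theorem~\ref{thm:pcpabord} applies to $\Omega_r$ as a compact manifold with boundary contained in $X^{reg}$, exactly as in the paper.
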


\begin{proof}
We let the reader   check that $\limsup_{t \rightarrow +\infty} g(t) \leq M \e$,
with $M=A \inf_X \f_{\infty}+\kappa$,
 thus for $x \in \Omega_r$,
$$
\limsup_{t \rightarrow +\infty} \f(t,x) \leq  \limsup_{t \rightarrow +\infty} v_\e(t,x) \leq [1+\e A] \f_{\infty}(x)-\e \log |s|_h(x)+M\e.
$$

We let $r \rightarrow 0^+$, so that this inequality holds for all $x \in X^{reg} \setminus D$.
Since $\e>0$ is arbitrary, we infer $\limsup_{t \rightarrow +\infty} \f(t,x) \leq \f_{\infty}(x)$
for all $x \in X^{reg} \setminus D$.
\end{proof}

\subsection{Proof of Theorem D}

It follows from Lemmata \ref{lem:unifbdd} and \ref{lem:unifbdd2} that $(\f_t)$ is uniformly bounded as $t \rightarrow +\infty$.
We let $\sigma$ denote a cluster point (for the $L^1$-topology), which is thus bounded and $\chi$-psh in $X$.
We can assume without loss of generality that the convergence  holds almost everywhere.
Previous Corollary shows that for a.e. $x \in X^{reg} \setminus D$, $\sigma(x) \leq \f_{\infty}(x)$, hence everywhere in $X$, the two functions being quasi-psh in $X$, 
while the comparison principle Theorem~\ref{thm:pcpabord}  and Propositions \ref{pro:subsol}, \ref{pro:subsol2} yield
$\sigma(x) \geq \f_{\infty}(x)$ in $X$.
Therefore $\sigma=\f_{\infty}$ and $\f_t$ locally uniformly converges to $\f_{\infty}$ in $X \setminus D$, as $t \rightarrow +\infty$.
This actually implies that $\f_t$  converges to $\f_{\infty}$ in capacity as $t \to + \infty$.
The proof of Theorem D is complete.

\end{document}